\theoremstyle{plain}
\newtheorem{thm}{Theorem}[section]
\newtheorem{cor}[thm]{Corollary}
\newtheorem{lem}[thm]{Lemma}
\newtheorem{prop}[thm]{Proposition}
\newtheorem{conj}[thm]{Conjecture}
\theoremstyle{definition}
\newtheorem{defn}[thm]{Definition}
\newtheorem{rem}[thm]{Remark}
\newtheorem{notation}[thm]{Notation}
\numberwithin{equation}{section}
\newcommand{\cO}{\mathcal{O}}
\newcommand{\N}{\mathbb{N}}
\newcommand{\tf}{\Tilde{f}}
\newcommand{\udx}{\underline{x}}
\newcommand{\udy}{\underline{y}}
\newcommand{\Reg}{\operatorname{Reg}}
\newcommand{\supp}{\operatorname{supp}}
\newcommand{\Z}{\mathbb{Z}}
\providecommand{\abs}[1]{\lvert#1\rvert}
 \def \D{\Delta}
\def\R{\mathbb{R}}
\def\m{\mathcal{M}}
\def\B{\mathcal{B}}
\def\D{\mathcal{D}}
\def\lip{{\rm Lip}}
\renewcommand*{\backref}[1]{}
\def\MRbibitem{\@ifnextchar[\my@lbibitem\my@bibitem}
\def\mybiblabel#1#2{\@biblabel{{\hyperref{http://www.ams.org/mathscinet-getitem?mr=#1}{}{}{#2}}}}
\def\myhyperanchor#1{\Hy@raisedlink{\hyper@anchorstart{cite.#1}\hyper@anchorend}}
\def\my@lbibitem[#1]#2#3#4\par{%
	\item[\mybiblabel{#2}{#1}\myhyperanchor{#3}\hfill]#4%
	\@ifundefined{ifbackrefparscan}{}{\BR@backref{#3}}%
	\if@filesw{\let\protect\noexpand\immediate
		\write\@auxout{\string\bibcite{#3}{#1}}}\fi\ignorespaces%
}
\def\my@bibitem#1#2#3\par{%
	\refstepcounter\@listctr
	\item[\mybiblabel{#1}{\the\value\@listctr}\myhyperanchor{#2}\hfill]#3%
	\@ifundefined{ifbackrefparscan}{}{\BR@backref{#2}}%
	\if@filesw\immediate\write\@auxout
	{\string\bibcite{#2}{\the\value\@listctr}}\fi\ignorespaces%
}
\begin{document}
\title[Open Expanding Multi-valued Topological Dynamical Systems]{Ergodic Optimization for Open Expanding Multi-valued Topological Dynamical Systems}
\author{Oliver~Jenkinson, Xiaoran~Li, Yuexin~Liao, and Yiwei~Zhang}

\address[O.~Jenkinson]{School of Mathematical Sciences, Queen Mary University of London, Mile End Road,
London, E1 4NS, UK}
\email{o.jenkinson@qmul.ac.uk}

\address[X.~Li]{The Division of Physics, Mathematics and Astronomy, California Institute of Technology, 1200 E California Blvd, Pasadena CA 91125, USA}
\email{xiaoran@caltech.edu}

\address[Y.~Liao]{The Division of Physics, Mathematics and Astronomy, California Institute of Technology, 1200 E California Blvd, Pasadena CA 91125, USA}
\email{yliao@caltech.edu}

\address[Y.~Zhang]{School of Mathematical Sciences and big data, Anhui University of Science and Technology, Huainan, Anhui 232001, P.R. China}
\email{2024087@aust.edu.cn, yiweizhang831129@gmail.com}

\subjclass[2020]{Primary: 37A99; Secondary: 26E25, 37B02, 37D20, 37F05,  54C60.}

\keywords{Ergodic optimization, maximizing measure, topological dynamics, ergodic theory, multi-valued, set-valued, correspondence}

\thanks{Y. Zhang is partially supported by National Natural Science Foundation (Nos.~12161141002, 12271432), and USTC-AUST Math Basic Discipline Research Center.}


\maketitle
\begin{abstract}
We study the optimization of ergodic averages for multi-valued dynamical systems, i.e.~where points may have multiple different forward orbits.
Under upper semi-continuity assumptions, we show that the maximum space average with respect to invariant probability measures for such systems can be characterised in terms of maximum time averages on an auxiliary shift space.
For all multi-valued expanding systems that are open mappings, we show that every H\"older continuous real-valued function can be modified by a coboundary, of the same H\"older exponent, such that the resulting function is dominated by its maximum ergodic average.
\end{abstract}

\section{Introduction}\label{introsection}

In this article we shall be concerned with dynamical systems where points have non-unique forward orbits.
Interest in such \emph{multi-valued} systems is motivated by the theory of overlapping iterated function systems
(see e.g.~\cite{bakermemams, baker, bakerkoivusalo, bms, hochman, peressimonsolomyak, pollicottsimon, rams, 
sidorov, solomyak}), by non-uniqueness and randomness in digit expansions (see e.g.~\cite{dk,dajanidevries, dajanidevries2,djkl}),
and by the dynamics of holomorphic correspondences
(see e.g.~\cite{bullett, bullettlomonaco, bullettpenrose,  LLMM21, LMM24}).

We shall be concerned with the ergodic theory of such systems: while there is an accepted
notion of invariant measure
(see e.g.~\cite{artstein,millerakin} and Section \ref{measures}) and a version of Poincar\'e's recurrence theorem
(see \cite{AFL91}), there is no general definition of ergodicity\footnote{For \emph{non-invariant} measures, a notion of ergodicity has been defined, see \cite{londhe}.}, and no direct analogue of
Birkhoff's ergodic theorem.
Despite the absence of these foundational concepts, 
our interest will be in ergodic averages, and more precisely
the prospect of giving meaning to the 
\emph{largest} (or \emph{smallest}) possible 
value of the ergodic average of a real-valued function defined on the state space of the dynamical system.
For single-valued dynamical systems,
such \emph{ergodic optimization} problems 
(see e.g.~\cite{Boc18, new survey} for overviews)
have been studied since the 1990s, initially stimulated
by numerical investigation of certain model problems (see \cite{Bou00, HO96a, HO96b, jenkinson1, jenkinson2}),
leading to the motivating paradigm of \emph{typical periodic optimization},
the conjecture (see \cite{HO96a,HO96b,YH99}), and eventual proof
(see \cite{Co16, GSZ25, HLMXZ, LZ25}) that for suitable chaotic systems, and suitably regular real-valued functions,
the corresponding ergodic averages are usually optimized by periodic orbits.

For single-valued systems,
 the most common theoretical setting for ergodic optimization
is a self-map $T$ of a compact metrizable space $X$, and a continuous function $f:X\to\R$;
 a $T$-invariant Borel probability measure $\mu$ is then said to be \emph{$f$-maximizing} if
$\int f\, \mathrm{d}\mu$ is equal to  
the maximum ergodic space average 
$$\beta(f)=\sup\left\{\int f\, d\nu: \nu\text{ is a $T$-invariant probability measure}\right\}.$$
The multi-valued dynamical systems
considered in this article
will consist of an upper
semi-continuous set-valued mapping $T$, from a compact metrizable space
$X$ to its power set $2^X$; the special case where $T(x)$ is a singleton for all $x\in X$ reduces to the familiar 
case of single-valued systems.

Perhaps surprisingly, in view of 
the absence of an analogue of Birkhoff's ergodic theorem in the multi-valued setting, we are able to define
a satisfactory notion of \emph{maximum ergodic average} for a continuous (or, more generally, upper semi-continuous)
real-valued function $f$.
At this level of generality we are able to prove (see Theorem \ref{thm:equivalentdefintions}) the coincidence of the maximum ergodic space average with various notions of maximum time average: this is done by appealing to an auxiliary \emph{orbit space}, equipped with the shift map, and working with time averages for this single-valued dynamical system.

Thereafter, we are motivated by the prospect of establishing a so-called \emph{Ma\~n\'e cohomology lemma}
for certain multi-valued systems. In the single-valued setting, this result 
(see e.g.~\cite{Bou00, CLT01, CG95,  LT03, Sav99})
asserts that, under suitable hypotheses, 
a coboundary $v-v\circ T$ can be added to the real-valued function $f$ so that the resulting function is 
weakly dominated by
its maximum ergodic average, leading to the characterisation of $f$-maximizing measures as precisely those supported by
the zero set of the function $f+v-v\circ T -\beta(f)$.
This technique may readily reveal the maximizing measure(s) for specific functions $f$
(see e.g.~\cite{adjr, aj, Bou00, jenkinsonhitting, announce, major, jenkinsonpams} for such usage), and the Ma\~n\'e lemma
is also a fundamental tool in the typical periodic optimization results of \cite{Co16, GSZ25, HLMXZ, LZ25}.

Remarkably, it turns out to be possible to define a suitable notion of coboundary, and establish
a version of the Ma\~n\'e cohomology lemma in the multi-valued setting, a result that
should underpin the investigation of typical periodic optimization (cf.~Conjecture \ref{TPOconj}) for classes
of multi-valued dynamical systems, and the identification of specific maximizing measures in concrete settings
(cf.~Conjecture \ref{sturmian_conj}).
Our route to proving this result (Theorems \ref{mane_open_expanding} and \ref{mane_X})
consists firstly of the observation that
the system is completely determined by its graph, and that a corresponding \emph{graph dynamical system} can be defined:
given $X$, $T$ and $f$, we then show that the ergodic optimization problem can be transported to the graph system,
in the sense that there exists an auxiliary real-valued function defined on the graph, with maximum ergodic average equal to that of $f$.
Our second step towards a Ma\~n\'e lemma 
is the development of a new theory of \emph{expanding} multi-valued dynamical systems,
generalising certain key features of the theory of single-valued expanding maps.
Combined with a hypothesis that these multi-valued systems be \emph{open} mappings (i.e.~open sets are mapped to open sets),
a satisfactory theory of inverse branches can be established, to an extent that facilitates the proof of
a Ma\~n\'e lemma.

Our Ma\~n\'e lemma is valid for open expanding multi-valued systems, and H\"older continuous real-valued functions.
Even in the single-valued setting (see Corollary \ref{mane_single_valued}), this result has something of a folklore status (see Section \ref{mane_section} for further details): to our knowledge it has not been stated in the generality given here,  
and certain of the proofs in the existing literature are incomplete, or apply to a more limited class of systems (e.g.~circle expanding maps, or subshifts of finite type).

The organisation of this article is as follows.
In Section \ref{measures} we review the theory of multi-valued dynamical systems and their invariant measures.
In Section \ref{maxergodic}, various notions of maximum ergodic average are defined, and shown to be equivalent
when the function $f$ is upper semi-continuous.
In Section \ref{graphsystem}, we investigate the relation between multi-valued dynamical systems and their graph systems,
and derive consequences for ergodic optimization.
In Section \ref{sec_openexpandingsetvalued}, a theory of open expanding multi-valued dynamical systems is developed,
and in Section \ref{mane_section} a Ma\~n\'e lemma is proved for such systems.
In Section \ref{examples_section} we consider 
ergodic optimization for certain specific functions, and 
specific open expanding multi-valued dynamical systems, with comparison to some single-valued analogues.

\section{Dynamical systems and their invariant measures}\label{measures}

This section consists of preliminaries on multi-valued dynamical systems, introducing the associated orbit space, giving various equivalent definitions of invariant measure, and establishing properties of the set of all invarant measures.

\begin{notation}
For a non-empty set $X$, let $2^X$ denote its power set, i.e.~the set of all subsets of $X$.
Given a map $T:X\to 2^X$, its \emph{graph} $gr(T)$ is defined by
\begin{equation}\label{graphdef}
gr(T) :=  \{(x,x')\in X^2: x'\in T(x)\}.
\end{equation}
Define projection maps $\pi, \pi':X^2\to X$ by
$$
\pi(x,x'):=x\quad ,\quad \pi'(x,x'):=x'\,,
$$
and let
\begin{equation}\label{projectionsgraph}
\pi_T := \pi|_{gr(T)}\quad , \quad \pi_T':= \pi'|_{gr(T)}
\end{equation}
be the restrictions of the two projection maps $\pi$, $\pi'$ to the graph of $T$.
\end{notation}

\begin{notation}
Given $T: X \rightarrow 2^X$, define $T^{-1} : X\rightarrow 2^X$ (cf.~e.g.~\cite[p.~34]{aubinfrankowska}) by 
$$T^{-1} (x') =\{ x \in X: x' \in T(x) \},$$
so that in particular $(T^{-1})^{-1} =T$.
\end{notation}

If $X$ is a compact metrizable space,
recall (see e.g.~\cite[p.~38]{aubinfrankowska}) that $T:X\to 2^X$
is \emph{upper semi-continuous} at a
point $x\in X$ if for every open set $V \subseteq X$ containing $T(x)$, there exists an open set $U \subseteq X$ containing $x$, such
that $T(u) \subseteq V$ for all $u\in U$; the map $T$ is \emph{upper semi-continuous} if it is upper semi-continuous at each
point of $X$.

\begin{rem}
The upper semi-continuity of $T:X\to 2^X$ is
equivalent to the upper semi-continuity of $T^{-1}:X\to 2^X$,
and these are equivalent to their graphs $gr(T)$, $gr(T^{-1})$ being closed subsets of $X^2$
(see e.g.~\cite[Thm.~16.12]{aliprantisborder}).

\noindent
Every map $T:X\to X$ induces a map $\tilde T:X\to 2^X$ by defining $\tilde T(x):=\{T(x)\}$, and $\tilde T$ is upper semi-continuous if and only if $T$ is continuous.
\end{rem}

We shall be interested in \emph{dynamical} properties of $T$, and adopt the following definition:

\begin{defn} (Multi-valued dynamical systems)

\noindent
For a compact metrizable space $X$,
and an upper semi-continuous map $T:X\to 2^X$,
the pair $(X,T)$ will be called a \emph{multi-valued topological dynamical system}.
The collection of all multi-valued topological dynamical systems will be denoted by $\D$.
\end{defn}

The dynamical character of $(X,T)$ stems from the following:

\begin{defn} (Orbit space)

\noindent
For $(X,T)\in\D$, 
define the \emph{orbit space}
\begin{equation}
    X_T := \{ (x_i)_{i \in \mathbb{Z}} \in X^{\mathbb{Z}} : x_{i+1} \in T (x_i) \text{ for all } i \in \mathbb{Z} \}.
\end{equation}
The spaces $X^{\mathbb{Z}}$ and $X_T$ are both compact and metrizable when equipped with the product topology. 
The \emph{shift map} $\sigma: X^{\mathbb{Z}} \rightarrow X^{\mathbb{Z}}$ defined by 
$$\sigma ((x_{i})_{i \in \mathbb{Z}}) :=(x_{i+1})_{i \in \mathbb{Z}}$$
is then continuous, with $\sigma (X_T) =X_T$.
\end{defn}

\begin{notation}\label{projection_notation}
For $(X,T)\in\D$, define $\pi_0 : X_T \rightarrow X$ by
\begin{equation*}
    \pi_0 ((x_i)_{i \in \mathbb{Z}}) :=x_0.
\end{equation*}
\end{notation}

\begin{notation}
For a compact metrizable space $X$, let $\B_X$ denote the collection of Borel subsets of $X$,
and let $\m(X)$ denote the set of Borel probability measures on $X$.
When equipped with the weak$^*$ topology, the space $\m(X)$ is compact.
We write $\supp \mu$ for the \emph{support} of a probability measure $\mu$, i.e.~the smallest closed set whose measure is equal to 1.
\end{notation}

\begin{defn}\label{def: invariant measure} (Invariant measures)

\noindent
For $(X,T)\in\D$, a measure $\mu\in \m(X)$ is said to be \emph{$T$-invariant} if
\begin{equation}\label{invariantmeasureineq}
\mu(A)\le \mu(T^{-1}A)
\end{equation}
 for all Borel subsets $A\in\B_X$.
\end{defn}

\begin{rem}\label{rem: invariant measure single-valued} 
Note that \eqref{invariantmeasureineq} gives $\mu (A) \leqslant \mu (T^{-1} A)$ and $\mu (X \setminus A) \leqslant \mu (T^{-1} (X \setminus A))$ for all $A \in \B_X$. 
  In the special case that $T$ is \emph{single-valued}, 
$T^{-1} (X \setminus A) =X \setminus T^{-1} A$, so $\mu (T^{-1} A) + \mu (T^{-1} (X \setminus A)) =1 =\mu (A) +\mu (X \setminus A)$,
so $\mu (A) =\mu (T^{-1} A)$ for all $A \in \B_X$, in other words $\mu$ is $T$-invariant in the classical sense
(see e.g.~\cite{walters}).
\end{rem}

There are a number of useful alternative characterisations of invariant measures\footnote{See
\cite{artstein}
for additional equivalent conditions.}:

\begin{lem}\label{invariantmeasure}
For $(X,T)\in\D$,
a measure $\mu\in \m(X)$ is $T$-invariant if and only if any of the following equivalent conditions hold:
\begin{itemize}
\item[(a)] $\mu(A)\le \mu(T^{-1}A)$ for all $A\in \B_X$.
\item[(b)] $\mu(A)\le \mu(TA)$ for all $A\in \B_X$.
\item[(c)] $\mu = \nu\circ \pi_0^{-1}$, where $\nu$ is a shift-invariant Borel probability measure on $X_T$.
\item[(d)] There exists a Borel probability measure $m$ on $gr(T)$
 such that
both marginal measures $m\circ\pi_T^{-1}$ and $m\circ(\pi_T')^{-1}$ are equal to $\mu$.
\item[(e)] There exists a measurable map
$k:X\to \m(X)$, $x\mapsto k_x$, with ${\rm supp}(k_x)\subseteq T(x)$ for $\mu$-almost every $x\in X$, such that $\mu(A)=\int k_x(A)\, d\mu(x)$ for all $A\in \B_X$.
\end{itemize}
\end{lem}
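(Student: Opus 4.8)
The plan is to prove the five characterisations are equivalent by establishing a cycle of implications, with the trivial identity (a) = Definition \ref{def: invariant measure} as the hub, and taking care that several of the steps use compactness of $X$ together with upper semi-continuity of $T$ (equivalently, closedness of $gr(T)$).

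\textbf{Outline of the implications.} First I would observe that (a) is literally the definition of $T$-invariance, so it suffices to show (a) $\Leftrightarrow$ (b), (c) $\Rightarrow$ (a), (d) $\Rightarrow$ (c) [or (d) $\Rightarrow$ (b)], (a) $\Rightarrow$ (d), (d) $\Leftrightarrow$ (e), and (a) $\Rightarrow$ (c). For (a) $\Leftrightarrow$ (b): since $(T^{-1})^{-1} = T$ and upper semi-continuity of $T$ is equivalent to that of $T^{-1}$, condition (b) for $T$ is just condition (a) for $T^{-1}$, so the equivalence follows once (a) is shown to be symmetric under $T \leftrightarrow T^{-1}$; alternatively, apply (a) to $X \setminus A$ and use $T^{-1}(X \setminus A) \supseteq X \setminus TA$ or the analogous set-theoretic inclusion (being careful that for multi-valued $T$ one only has inclusions, not equalities, so the argument must be run via monotonicity of $\mu$ on the correct inclusion). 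The implication (d) $\Rightarrow$ (b) is quick: given $m$ on $gr(T)$ with both marginals $\mu$, for $A \in \B_X$ one has $\pi_T^{-1}(A) \subseteq (\pi_T')^{-1}(TA)$ inside $gr(T)$ (if $x \in A$ and $(x,x') \in gr(T)$ then $x' \in T(x) \subseteq TA$), hence $\mu(A) = m(\pi_T^{-1}A) \le m((\pi_T')^{-1}(TA)) = \mu(TA)$.

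\textbf{The Kolmogorov-consistency step.} The implications (c) $\Rightarrow$ (a) and (d) $\Rightarrow$ (c) are short measure-theoretic manipulations: for (c) $\Rightarrow$ (a), shift-invariance of $\nu$ gives $\mu(A) = \nu(\pi_0^{-1}A) = \nu(\sigma^{-1}\pi_0^{-1}A) = \nu(\{(x_i) : x_1 \in A\}) \le \nu(\{(x_i): x_1 \in A\} ) $ and one checks $\{(x_i) \in X_T : x_1 \in A\} \subseteq \{(x_i) : x_0 \in T^{-1}A\}$, again since $x_1 \in T(x_0)$ forces $x_0 \in T^{-1}(x_1) \subseteq T^{-1}A$; for (d) $\Rightarrow$ (c), one builds $\nu$ on $X_T$ from $m$ by a Kolmogorov extension, gluing countably many copies of $m$ along the shift using the coincidence of marginals, and checks the resulting measure is concentrated on $X_T$ (this needs $gr(T)$ closed, so that $X_T$ is closed and measurable) and is shift-invariant by construction. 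The main obstacle is the reverse direction (a) $\Rightarrow$ (d) (from which (a) $\Rightarrow$ (c) then follows via (d) $\Rightarrow$ (c)): one must produce, from an invariant $\mu$, a measure on $gr(T)$ with both marginals $\mu$. I would do this via a compactness/weak$^*$-limit argument or by a disintegration: condition (e) is the disintegrated form of (d), so I would first show (a) $\Rightarrow$ (e) by constructing a Markov kernel $x \mapsto k_x$ supported on $T(x)$ realising $\mu$ as stationary, then set $dm(x,x') = dk_x(x')\, d\mu(x)$ to get (d), and conversely recover (e) from (d) by disintegrating $m$ over its first marginal $\mu$ (using that $\m(X)$ is a compact metrizable space so disintegrations exist, and that $\supp(k_x) \subseteq T(x)$ for $\mu$-a.e.\ $x$ because $m$ is carried by the closed set $gr(T)$).

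\textbf{Where the real work is.} The genuinely nontrivial point is the existence of the kernel $k$ in (e) (equivalently the measure $m$ in (d)) given only the inequality $\mu(A) \le \mu(T^{-1}A)$ for all Borel $A$. The natural approach is: consider the (nonempty, by the measurable selection / Michael-type theory for u.s.c.\ maps with closed graph on a compact metrizable space) convex compact set of probability measures $m$ on $gr(T)$ with first marginal $\mu$, and on this set consider the affine continuous map $m \mapsto m \circ (\pi_T')^{-1} - m \circ \pi_T^{-1} = m\circ(\pi_T')^{-1} - \mu$; we must show $0$ is in the image, i.e.\ some $m$ has equal marginals. One clean route is to push $\mu$ forward by iterating a "one-step" transition: pick any measurable kernel $k^{0}_x$ supported on $T(x)$ (exists by measurable selection since $gr(T)$ is closed), form the transfer operator $L$ on $\m(X)$, and take a Cesàro/weak$^*$-accumulation point; but $L\mu$ need not equal $\mu$, so instead I would invoke a Hahn--Banach / minimax argument: the inequality $\mu \le \mu \circ$ (transition-using-any-selection) for \emph{all} selections, combined with a separation argument, forces existence of a selection-mixture for which equality holds — this is exactly the content that makes the multi-valued notion of invariance equivalent to the existence of a genuine stationary Markov chain carried by the graph. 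I expect this Hahn--Banach/measurable-selection step to be the main obstacle, and would model it on the treatment in \cite{artstein} or \cite{millerakin}; all the remaining implications are routine once it is in place.
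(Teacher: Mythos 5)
Your outline is structurally sensible, and its easy parts essentially reproduce what the paper actually does: the paper's own ``proof'' of this lemma is a citation of \cite[Thm.~3.2]{millerakin} for the equivalence of (a), (c), (d), (e), followed by the observation that (d) is unchanged when $T$ is replaced by $T^{-1}$ while (a) becomes (b). Your remark that (b) for $T$ is (a) for $T^{-1}$, and your inclusion $\pi_T^{-1}(A)\subseteq(\pi_T')^{-1}(TA)$ giving (d) $\Rightarrow$ (b), are exactly that symmetry argument unpacked; your (c) $\Rightarrow$ (a), (d) $\Rightarrow$ (c) and (d) $\Leftrightarrow$ (e) sketches are the routine parts of Miller--Akin's theorem. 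One small correction: the inclusion $T^{-1}(X\setminus A)\supseteq X\setminus TA$ you float for a direct proof of (a) $\Rightarrow$ (b) is false (take $T(0)=\{1\}$, $T(1)=\{1\}$ on $X=\{0,1\}$, $A=\{1\}$: then $0\in X\setminus TA$ but $T^{-1}(\{0\})=\emptyset$); the complement argument that does work applies (a) to $X\setminus TA$ and uses $T^{-1}(X\setminus TA)\subseteq X\setminus A$.

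The genuine gap is the implication (a) $\Rightarrow$ (d)/(e), which you rightly call the main obstacle but do not prove, and the one concrete mechanism you propose for it rests on a false premise. Invariance in the sense of (a) does \emph{not} imply $\mu\le \mu\circ$(transition) for \emph{every} selection kernel carried by the graph: on $X=\{0,1\}$ with $T(0)=\{0,1\}$, $T(1)=\{1\}$, the measure $\mu=\delta_0$ satisfies (a), yet the admissible kernel $k_0=\delta_1$, $k_1=\delta_1$ pushes $\mu$ to $\delta_1$, so the claimed family of inequalities fails and no separation argument can start from it. What is actually needed is a Strassen/Hall-type coupling theorem: a probability measure $m$ on the closed set $gr(T)$ with both marginals equal to $\mu$ exists precisely under the mass-transport condition $\mu(A)\le\mu(TA)$ (for closed $A$, say), and establishing this duality --- or equivalently the kernel statement (e) --- is precisely the content of \cite[Thm.~3.2]{millerakin} (cf.\ \cite{artstein}), which the paper simply cites. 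Since at this point your plan is to ``model it on'' those same references, your proposal effectively reduces to the paper's citation without supplying the argument; as a self-contained proof it is incomplete exactly at the lemma's only nontrivial step.
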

\begin{proof}
For the equivalence between (a), (c), (d), and (e), see \cite[Thm~3.2]{millerakin}. Note that (d) 
does not change if $T$ is replaced by $T^{-1}$,
whereas statement (a) becomes statement (b);
it follows that (b) is equivalent to (d), and thus all statements are equivalent.
\end{proof}

\begin{notation}
For $(X,T)\in\D$, let $\m_T=\m_T (X)$ denote the set of all $T$-invariant Borel probability measures, and $\m_\sigma =\m_\sigma (X_T)$ the set of all $\sigma$-invariant Borel probability measures on $X_T$.
\end{notation}

\begin{lem}\label{lem:MF}
If $(X,T)\in\D$ then $\m_T =\m_\sigma \circ \pi_0^{-1} :=\{ \nu \circ \pi_0^{-1} : \nu \in \m_\sigma \}$. 
The
set $\m_T$ is weak$^*$ compact, metrizable, and convex.
It is non-empty if and only if $X_T \neq\emptyset$.
\end{lem}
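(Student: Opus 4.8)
The plan is to deduce the whole statement from Lemma~\ref{invariantmeasure}, from the elementary properties of spaces of Borel probability measures on compact metrizable spaces, and from the Krylov--Bogolyubov theorem. The identity $\m_T = \m_\sigma \circ \pi_0^{-1}$ is really just a repackaging of Lemma~\ref{invariantmeasure}(c): the inclusion $\m_\sigma \circ \pi_0^{-1} \subseteq \m_T$ is the implication ``(c)$\Rightarrow$(a)'' of that lemma, while $\m_T \subseteq \m_\sigma \circ \pi_0^{-1}$ is the implication ``(a)$\Rightarrow$(c)''. So I would state this first and move on.

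For the remaining assertions I would work on the shift side and transport back. Since $X_T$ is a closed subset of the compact metrizable space $X^{\Z}$, it is itself compact and metrizable, and $\sigma|_{X_T}\colon X_T\to X_T$ is a continuous self-map; hence $\m(X_T)$ is weak$^*$ compact and metrizable, and $\m_\sigma$ --- being the set of fixed points in $\m(X_T)$ of the weak$^*$ continuous affine map $\nu\mapsto \nu\circ(\sigma|_{X_T})^{-1}$ --- is a weak$^*$ closed, hence compact, convex subset of $\m(X_T)$. Next I would check that the pushforward $P\colon \m(X_T)\to\m(X)$, $P(\nu):=\nu\circ\pi_0^{-1}$, is affine and weak$^*$ continuous; continuity follows from the change of variables $\int_X g\, dP(\nu)=\int_{X_T} (g\circ\pi_0)\, d\nu$, valid for every $g\in C(X)$, together with continuity of $\pi_0$ (so that $g\circ\pi_0\in C(X_T)$). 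Consequently $\m_T=P(\m_\sigma)$ is the image of a weak$^*$ compact convex set under a weak$^*$ continuous affine map, hence weak$^*$ compact and convex; and it is metrizable simply because it is a subset of the metrizable space $\m(X)$.

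Finally, for non-emptiness: if $X_T=\emptyset$ then there is no Borel probability measure on $X_T$, so $\m_\sigma=\emptyset$ and therefore $\m_T=\emptyset$; conversely, if $X_T\neq\emptyset$, then $X_T$ is a non-empty compact metrizable space equipped with the continuous self-map $\sigma|_{X_T}$, so the Krylov--Bogolyubov theorem (see e.g.~\cite{walters}) furnishes a $\sigma$-invariant measure, i.e.~$\m_\sigma\neq\emptyset$, whence $\m_T=P(\m_\sigma)\neq\emptyset$. None of these steps presents a genuine obstacle: the argument is essentially bookkeeping, and if one had to single out the step requiring a little care it would be the verification that $P$ is weak$^*$ continuous (a routine change-of-variables argument) together with the appeal to Krylov--Bogolyubov for existence, with everything else being immediate from Lemma~\ref{invariantmeasure} and standard facts about $\m(\cdot)$.
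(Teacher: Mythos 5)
Your proof is correct, but it is organised differently from the paper's. The paper disposes of the identity $\m_T=\m_\sigma\circ\pi_0^{-1}$, compactness, convexity, and the non-emptiness criterion by citing Miller \& Akin \cite[Thm.~3.2]{millerakin} directly, and only argues metrizability by hand (via separability of $C(X)$ and weak$^*$ metrizability of the unit ball of $C(X)^*$ \cite[Thm.~10.7]{aliprantisburkinshaw}, then noting $\m_T$ is a closed subset of that ball). You instead take the identity as an immediate reformulation of Lemma~\ref{invariantmeasure}(a)$\Leftrightarrow$(c) (which is legitimate and non-circular, since that lemma precedes this one and its proof does not use it), and then derive the remaining properties yourself: $\m_\sigma$ is a closed convex set of fixed points of a weak$^*$ continuous affine self-map of the compact set $\m(X_T)$, the pushforward $\nu\mapsto\nu\circ\pi_0^{-1}$ is weak$^*$ continuous and affine, so $\m_T$ is a continuous affine image of a compact convex set, and non-emptiness for $X_T\neq\emptyset$ comes from Krylov--Bogolyubov. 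This is more self-contained than the paper's citation-based treatment (at the cost of re-proving facts that \cite{millerakin} already supplies, and ultimately still resting on \cite{millerakin} through Lemma~\ref{invariantmeasure}), and it correctly handles the degenerate case $X_T=\emptyset$, where everything is vacuous. Your metrizability step is the same in substance as the paper's, just compressed into the standard fact that $\m(X)$ is weak$^*$ metrizable when $X$ is compact metrizable; if you wanted to match the paper's level of detail you would justify that fact exactly as it does, via separability of $C(X)$.
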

\begin{proof}
See \cite[Thm.~3.2]{millerakin} (cf.~\cite[Cor.~5.4]{millerakin})
for compactness, convexity, and the identity $\m_T =\m_\sigma \circ \pi_0^{-1}$,
and for the fact that $\m_T\neq\emptyset$ if and only if $X_T \neq\emptyset$.
To prove metrizability,
let $E$ be the vector space of continuous linear functionals on $C(X)$,
equipped with the weak$^*$ topology,
and let
$B$ be the closed unit ball  in $E$.
Since $X$ is compact and metrizable, $C(X)$ is separable,
so $B$
 is also weak$^*$ metrizable, by \cite[Thm.~10.7]{aliprantisburkinshaw}.
Since  $\m_T$ is a weak$^*$ closed subset of $B$, it is itself
metrizable.
\end{proof}

\begin{rem}\label{nonsimplex}
Suppose $(X,T)\in\D$. If $T$ is \emph{single-valued}
then the ergodic decomposition theorem asserts that $\m_T$ is a Choquet simplex (see e.g.~\cite{choquet, phelps}).
More generally this need not be the case: e.g.~if $X=\Z/4\Z$
and $T(x)=\{x+1,x-1\}$ for all $x\in X$, then the four measures of the form $\frac{1}{2}(\delta_x+\delta_{x+1})$
are the extreme points of $\m_T$, but
for example $\frac{1}{4}\sum_{x\in X}\delta_x$
can be expressed in more than one way as a convex average of the extreme measures.
\end{rem}

\begin{notation}
If $(X,T)\in\D$ and  $A\subseteq X$, let $T(A) :=\bigcup_{x \in A} T(x)$ and $T^{-1} (A) :=\bigcup_{x \in A} T^{-1} (x)$. 
The sets $T^n (A)$ and $T^{-n} (A)$ are then defined,
for all  $n \in \N$,
 by
\begin{equation}\label{inductive_definition}
    T^0 (A) :=A, \quad T^n (A) :=T(T^{n-1} (A)), \quad T^{-n} (A) :=T^{-1} (T^{-(n-1)} (A)) .
\end{equation}
\end{notation}

\begin{lem}\label{compact_image_iterates}
If $(X,T)\in\D$, and  $A\subseteq X$ is compact, then $T^n (A)$ is compact for all $n \in \mathbb{Z}$.
\end{lem}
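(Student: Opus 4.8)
The plan is to reduce the statement to the two assertions that $T(A)$ and $T^{-1}(A)$ are compact whenever $A\subseteq X$ is compact, and then to obtain the general case by induction, using the inductive definition \eqref{inductive_definition}.

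First I would record that, since $T$ is upper semi-continuous, the Remark following the definition of upper semi-continuity shows that $gr(T)$ is a closed subset of $X^2$; as $X$ is compact metrizable, $X^2$ is compact, so $gr(T)$ is in fact compact. Next, observe that for compact $A\subseteq X$ one has $T(A)=\pi_T'\big(\pi_T^{-1}(A)\big)$, where $\pi_T=\pi|_{gr(T)}$ and $\pi_T'=\pi'|_{gr(T)}$: indeed $x'\in T(A)$ iff there exists $x\in A$ with $(x,x')\in gr(T)$. The preimage $\pi_T^{-1}(A)$ is closed in $gr(T)$ by continuity of $\pi_T$, hence compact, and its image under the continuous map $\pi_T'$ is therefore compact; thus $T(A)$ is compact. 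An induction on $n$, starting from $T^0(A)=A$ and using $T^n(A)=T(T^{n-1}(A))$, now yields compactness of $T^n(A)$ for all $n\in\N$.

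For negative iterates I would argue symmetrically. By the same Remark, upper semi-continuity of $T$ is equivalent to that of $T^{-1}$, so $gr(T^{-1})$ is also closed, hence compact (alternatively, $gr(T^{-1})$ is the image of $gr(T)$ under the continuous coordinate-swap of $X^2$, so is directly seen to be compact). Exactly as above, $T^{-1}(A)=\pi_T\big((\pi_T')^{-1}(A)\big)$ is a continuous image of a compact set, hence compact, and induction via $T^{-n}(A)=T^{-1}(T^{-(n-1)}(A))$ finishes the case $n\le 0$.

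There is no substantial obstacle here: the only slightly delicate point is the passage from upper semi-continuity to the closed-graph property for $T$ — and its symmetry under replacing $T$ by $T^{-1}$ — which is precisely the content of the earlier Remark; beyond that, the proof rests only on the standard facts that a closed subset of a compact space is compact and that continuous images of compact sets are compact.
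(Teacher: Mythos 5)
Your proof is correct and follows essentially the same route as the paper: the paper writes $T(A)=\pi_T'\bigl(gr(T)\cap(A\times X)\bigr)$, which is exactly your set $\pi_T'\bigl(\pi_T^{-1}(A)\bigr)$, and then concludes by the same induction for positive iterates and the symmetric argument for $T^{-1}$. The only difference is that you spell out the closed-graph/compactness details that the paper leaves implicit.
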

\begin{proof}
If $A$ is a compact subset of $X$ then
$T(A)= \pi_T' (gr (T) \cap (A \times X))$ is compact,
so induction using (\ref{inductive_definition})
gives that $T^n (A)$ is compact for all $n \in \N$. A similar argument shows that $T^{-n} (A)$ is compact for all $n \in \N$,
so the result follows.
\end{proof}

The following is another useful criterion for $\m_T$ to be non-empty:

\begin{lem}\label{lem: dynamical domain}
    If $(X,T)\in\D$ then $\pi_0 (X_T) =\bigcap_{n \in \mathbb{Z}} T^n (X)$.
Moreover, 
$X_T \neq \emptyset$ if and only if $\bigcap_{n \in \mathbb{Z}} T^n (X) \neq \emptyset$.
\end{lem}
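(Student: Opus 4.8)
The plan is to prove the two assertions in Lemma \ref{lem: dynamical domain} in turn, the second being an easy corollary of the first.

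\textbf{Step 1: the inclusion $\pi_0(X_T)\subseteq \bigcap_{n\in\Z} T^n(X)$.} This direction is essentially by definition. Fix $(x_i)_{i\in\Z}\in X_T$; I want to show $x_0\in T^n(X)$ for every $n\in\Z$. For $n\ge 0$, the relations $x_{i+1}\in T(x_i)$ for $-n\le i\le -1$ give, by an immediate induction using \eqref{inductive_definition}, that $x_0\in T^n(\{x_{-n}\})\subseteq T^n(X)$; the base case $n=0$ is $x_0\in X=T^0(X)$. For $n<0$, writing $n=-m$ with $m>0$, the relations $x_{i+1}\in T(x_i)$ for $0\le i\le m-1$ say exactly that $x_i\in T^{-1}(x_{i+1})$, so another induction gives $x_0\in T^{-m}(\{x_m\})\subseteq T^{-m}(X)=T^n(X)$. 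Hence $x_0\in\bigcap_{n\in\Z}T^n(X)$.

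\textbf{Step 2: the reverse inclusion $\bigcap_{n\in\Z} T^n(X)\subseteq \pi_0(X_T)$.} This is the substantive part. Let $x\in\bigcap_{n\in\Z}T^n(X)$; I must produce a bi-infinite orbit through $x$, i.e.\ some $(x_i)_{i\in\Z}\in X_T$ with $x_0=x$. For each $N\in\N$, the hypothesis $x\in T^N(X)$ together with $x\in T^{-N}(X)$ means there is a finite admissible chain $x_{-N}^{(N)}\to x_{-N+1}^{(N)}\to\cdots\to x_0^{(N)}=x\to\cdots\to x_N^{(N)}$ with consecutive terms related by $T$ (unwinding \eqref{inductive_definition}); I pad this out to a point $\xi^{(N)}=(\xi^{(N)}_i)_{i\in\Z}\in X^\Z$ by, say, repeating the endpoints outside $[-N,N]$ (repetition is legitimate at an endpoint only if it lies in its own image, so instead it is cleaner simply to extend arbitrarily, e.g.\ constantly, with no admissibility claim outside $[-N,N]$). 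Then I extract a convergent subsequence $\xi^{(N_k)}\to\xi$ in the compact space $X^\Z$. The key point is that $X_T$ is closed in $X^\Z$: this follows from upper semi-continuity of $T$, equivalently closedness of $gr(T)$ in $X^2$, as noted in the remark after the definition of upper semi-continuity — indeed $X_T$ is the intersection over $i\in\Z$ of the closed sets $\{(x_j)_j: (x_i,x_{i+1})\in gr(T)\}$. For each fixed $i$, once $N_k>|i|+1$ the point $\xi^{(N_k)}$ satisfies $\xi^{(N_k)}_{i+1}\in T(\xi^{(N_k)}_i)$, so $(\xi^{(N_k)}_i,\xi^{(N_k)}_{i+1})\in gr(T)$; passing to the limit and using closedness of $gr(T)$ gives $(\xi_i,\xi_{i+1})\in gr(T)$, i.e.\ $\xi_{i+1}\in T(\xi_i)$. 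As this holds for all $i\in\Z$, we get $\xi\in X_T$. Finally $\xi_0=\lim_k \xi^{(N_k)}_0=\lim_k x=x$, so $x=\pi_0(\xi)\in\pi_0(X_T)$. Combined with Step 1 this proves the first identity.

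\textbf{Step 3: the equivalence.} If $X_T\neq\emptyset$ then $\pi_0(X_T)\neq\emptyset$, so $\bigcap_{n\in\Z}T^n(X)\neq\emptyset$ by the identity just proved. Conversely if $\bigcap_{n\in\Z}T^n(X)\neq\emptyset$ then $\pi_0(X_T)\neq\emptyset$, which forces $X_T\neq\emptyset$. This completes the proof.

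\textbf{Main obstacle.} The only real work is Step 2, and within it the genuine subtlety is the compactness/diagonal argument: one must be careful that the approximating finite chains are only admissible on a window $[-N,N]$ that grows, so admissibility of the limit at a fixed coordinate $i$ is only guaranteed once $N_k$ is large enough — this is exactly where compactness of $X^\Z$ (to extract the subsequence) and closedness of $gr(T)$ in $X^2$ (to pass admissibility to the limit) are both used. One should also take minor care with the definition of $T^{-n}$ versus iterating $T^{-1}$, and with the degenerate case where $\bigcap_n T^n(X)$ could in principle be empty, but these are routine. Note $T^n(X)$ is compact for every $n\in\Z$ by Lemma \ref{compact_image_iterates}, so the intersection $\bigcap_{n\in\Z}T^n(X)$ is a nested intersection of nonempty compacta precisely when each $T^n(X)$ is nonempty, giving an alternative route to nonemptiness, but the argument above does not need this.
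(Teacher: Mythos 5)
Your proof is correct. One point of comparison worth noting: the paper does not actually prove this lemma at all — it simply cites Miller \& Akin (their Lemma~1.2), so your argument serves as a self-contained replacement rather than a variant of an argument given in the text. What you write is exactly the standard proof: the easy inclusion by unwinding the inductive definitions of $T^n$ and $T^{-n}$ (your care in distinguishing $T^{-n}(A)=T^{-1}(T^{-(n-1)}(A))$ from ``the inverse of $T^n$'' is appropriate, since for multi-valued $T$ these need not coincide pointwise, though both give what is needed here), and the substantive inclusion by building admissible chains of length $2N$ through $x$ from $x\in T^N(X)\cap T^{-N}(X)$, padding arbitrarily, extracting a convergent subsequence in the compact metrizable space $X^{\Z}$, and passing admissibility coordinate-by-coordinate to the limit via closedness of $gr(T)$ (which the paper grants in its remark on upper semi-continuity). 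The only stylistic alternative is to avoid sequences altogether: the sets $C_N=\{\udx\in X^{\Z}: x_0=x,\ (x_i,x_{i+1})\in gr(T)\ \text{for}\ |i|\le N\}$ are closed, nonempty, and nested, so the finite intersection property in the compact space $X^{\Z}$ gives a point of $X_T$ over $x$ directly; this buys nothing essential here since metrizability makes sequential compactness available, but it sidesteps the padding/subsequence bookkeeping. Your Step 3 and your handling of the degenerate empty-intersection case are fine.
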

\begin{proof}
See \cite[Lemma~1.2]{millerakin}.
\end{proof}

\begin{cor}\label{wb0vvud0}
    Let $(X,T)\in\D$. If $T(X) =X$ or $T^{-1} (X) =X$, then $X_T \neq \emptyset$.
\end{cor}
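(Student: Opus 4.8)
The plan is to reduce this, via Lemma~\ref{lem: dynamical domain}, to showing that $\bigcap_{n\in\mathbb{Z}}T^n(X)\neq\emptyset$. It suffices to treat the case $T(X)=X$: the case $T^{-1}(X)=X$ then follows by applying this to $T^{-1}$ in place of $T$ --- which again belongs to $\D$, since upper semi-continuity of $T^{-1}$ is equivalent to that of $T$ --- together with the observation that the index-reversal map $(x_i)_{i\in\mathbb{Z}}\mapsto(x_{-i})_{i\in\mathbb{Z}}$ is a bijection of $X_T$ onto $X_{T^{-1}}$ (equivalently, that $\bigcap_{n\in\mathbb{Z}}T^n(X)=\bigcap_{n\in\mathbb{Z}}(T^{-1})^n(X)$, since $(T^{-1})^n=T^{-n}$).

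So suppose $T(X)=X$, with $X\neq\emptyset$. An immediate induction gives $T^n(X)=X$ for all $n\ge 0$, so $\bigcap_{n\in\mathbb{Z}}T^n(X)=\bigcap_{n\ge 0}T^{-n}(X)$. Each $T^{-n}(X)$ is compact by Lemma~\ref{compact_image_iterates}, and the sequence $(T^{-n}(X))_{n\ge 0}$ is nested decreasing, because $T^{-1}(X)\subseteq X$ and $T^{-1}$ is monotone. By compactness (a nested sequence of non-empty compact sets has non-empty intersection), it therefore remains only to verify that every $T^{-n}(X)$ is non-empty.

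For this I would construct a backward orbit. Fix any $x_0\in X$; using $X=T(X)=\bigcup_{x\in X}T(x)$, choose inductively $x_{-k-1}\in X$ with $x_{-k}\in T(x_{-k-1})$, for each $k\ge 0$. Unwinding the inductive definition of $T^{-n}$ then shows $x_{-n}\in T^{-n}(X)$ for every $n\ge 0$, so $T^{-n}(X)\neq\emptyset$, completing the argument.

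The one subtlety to keep in mind is that the hypothesis $T(X)=X$ (every point of $X$ has a $T$-preimage) is \emph{not} equivalent to $T^{-1}(X)=X$ (every point of $X$ has a non-empty $T$-image), so the non-emptiness of the backward iterates $T^{-n}(X)$ genuinely has to be extracted from the hypothesis rather than being automatic; the backward-orbit construction is exactly what supplies it. Everything else is routine. One could alternatively bypass the explicit orbit by establishing the set inclusion $B\cap T(X)\subseteq T(T^{-1}(B))$ for arbitrary $B\subseteq X$ and iterating it, but the construction above seems the cleanest route.
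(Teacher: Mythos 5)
Your proof is correct and follows essentially the same route as the paper: reduce via Lemma~\ref{lem: dynamical domain} to showing $\bigcap_{n\in\mathbb{Z}}T^n(X)\neq\emptyset$, and obtain this from a nested decreasing sequence of compact sets (Lemma~\ref{compact_image_iterates}), treating the two hypotheses symmetrically. The only difference is that you spell out what the paper leaves implicit --- the non-emptiness of each iterate, supplied by your backward-orbit construction --- which is a legitimate and slightly more careful rendering of the same argument.
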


\begin{proof}
    If $T^{-1} (X) =X$, then $\bigcap_{n \in \mathbb{Z}} T^n (X) =\bigcap_{n=0}^{\infty} T^n (X)$ is the intersection of a decreasing sequence of compact sets, hence non-empty,
so $X_T \neq \emptyset$
by Lemma~\ref{lem: dynamical domain}. The case $T(X) =X$ is proved similarly.
\end{proof}

\section{Maximum ergodic averages and maximizing measures}\label{maxergodic}

The purpose of this section is to develop suitable notions of maximum ergodic average, and maximizing measure,
in the context of multi-valued dynamical systems.
While the multiplicity of forward orbits for a given point is an obstacle to proving an analogue of Birkhoff's ergodic theorem,
in the context of the \emph{maximum} ergodic averages it turns out to be possible to equate space averages and time averages, provided the latter are interpreted on the orbit space equipped with the shift map.

\begin{notation}
Suppose $(X,T)\in\D$, and $n \in \N$. Define
\begin{equation}
    \cO_n (T) :=\left\{ (x_0,  x_1, \dots, x_n) \in X^{n+1} : x_{k+1} \in T(x_k) \text{ for } 0\le k\le n-1  \right\}.
\end{equation}
For $f : X \rightarrow \R$, we use $S_n f$ to denote both the function $S_n f : \cO_n (T) \rightarrow \R$
given by
	\begin{equation*}
		S_n f \left((x_i)_{i=0}^n\right) :=\sum_{k=0}^n f(x_k),
	\end{equation*}
and the function $S_n f : X_T \rightarrow \R$
given by
\begin{equation}\label{S_nf=}
		S_n f \left((x_i)_{i \in \Z}\right) :=\sum_{k=0}^n f(x_k),
	\end{equation}
as the argument of $S_nf$ will always be clear from the context.
Define $\tf \colon X_T \rightarrow \R$ by
	\begin{equation}\label{tf=}
		\tf \left((x_i)_{i \in \mathbb{Z}}\right) := f(x_0).
	\end{equation}
\end{notation}
	
\begin{defn}\label{maximum_ergodic_averages}
Given $(X,T)\in\D$, and $f:X\to\R$,
    an orbit $\udx \in X_T$ is said to be \emph{$(T, f)$-regular} if
$\lim_{n \to +\infty} \frac{1}{n+1} S_n f (\udx)$ exists as an element of $[-\infty, \infty]$.
The set of $(T, f)$-regular orbits will be denoted by $\Reg (X,T,f) =\Reg (T, f)$. 
With the convention that $\sup \emptyset =-\infty$,
define
	\begin{equation*}
		\beta (f) = \beta (f, T):= \sup_{\udx \in \Reg (T, f)} \lim_{n \to +\infty} \frac{1}{n+1} S_n f (\udx),
	\end{equation*}
	\begin{equation*}
		\gamma (f) = \gamma (f, T):= \sup_{\udx \in X_T} \limsup_{n \to +\infty} \frac{1}{n+1} S_n f (\udx),
	\end{equation*}
	\begin{equation*}
		\delta (f) = \delta (f, T) := \limsup_{n \to +\infty} \frac{1}{n+1} \sup_{(x_0 ,x_1 ,\dots ,x_n) \in \cO_n (T)} S_n f (x_0 ,x_1 ,\dots ,x_n),
	\end{equation*}
        \begin{equation*}
            \varepsilon (f) =\varepsilon (f,T) := \sup_{\udx \in X_T} \inf_{n \in \N} \frac{1}{n+1} S_n f(\udx),
        \end{equation*}
and if $f$ is Borel measurable and bounded either above or below, define
	\begin{equation}\label{alpha_defn}
		\alpha (f) = \alpha (f, T) := \sup_{\mu \in \m_T} \int \! f \, \mathrm{d} \mu.
	\end{equation}
\end{defn}
	
	In general the various notions of maximum ergodic average from Definition \ref{maximum_ergodic_averages}
need not be equal, but if $f$ is upper semi-continuous then it turns out that they are:
	
\begin{thm}\label{thm:equivalentdefintions}
    If $(X,T)\in\D$ and 
    $f \colon X \rightarrow \R$ is upper semi-continuous, then
    \begin{equation}\label{MEA for T}
		\alpha (f) =\beta (f) =\gamma (f) =\delta (f) =\varepsilon (f) \in [- \infty, +\infty ).
    \end{equation}
    If, moreover, $X_T \neq \emptyset$ and $f$ is bounded, then $\alpha (f) \in \R$.
\end{thm}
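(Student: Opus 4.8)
The plan is to establish the chain of equalities in \eqref{MEA for T} by proving a cycle of inequalities, using the orbit space $X_T$ and the correspondence $\m_T = \m_\sigma \circ \pi_0^{-1}$ from Lemma \ref{lem:MF} to transfer the classical single-valued results about the shift $\sigma$ on $X_T$. First I would observe that $\tf : X_T \to \R$ defined in \eqref{tf=} is upper semi-continuous (being the composition of the upper semi-continuous $f$ with the continuous projection $\pi_0$), and that $S_n f(\udx) = S_n \tf(\udx)$ for $\udx \in X_T$ in the sense of \eqref{S_nf=}. Since $\int \tf \, d\nu = \int f \, d(\nu\circ\pi_0^{-1})$, Lemma \ref{lem:MF} gives $\alpha(f,T) = \alpha(\tf,\sigma)$, the classical maximum ergodic average of the upper semi-continuous function $\tf$ for the single-valued system $(X_T,\sigma)$. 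If $X_T = \emptyset$ then $\m_T = \emptyset$ and every quantity is $-\infty$ by the stated conventions, so I would dispose of that case immediately and assume $X_T \neq \emptyset$ henceforth.

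The core is then the single-valued statement: for $(X_T,\sigma)$ and upper semi-continuous $g = \tf$, one has $\alpha(g) = \beta(g) = \gamma(g) = \delta(g) = \varepsilon(g)$ with the common value in $[-\infty,+\infty)$. I would prove this by the standard cycle. The easy direction $\beta \le \gamma \le \delta$ is immediate from the definitions ($\Reg \subseteq X_T$, and $\sup\liminf \le \limsup\sup$). For $\alpha \le \beta$: given $\mu \in \m_\sigma$ ergodic, Birkhoff's theorem produces a $\sigma$-generic point $\udx$ with $\frac1{n+1}S_n g(\udx) \to \int g\,d\mu$, so $\int g\,d\mu \le \beta(g)$; by the ergodic decomposition theorem (valid here since $\sigma$ is single-valued, cf.\ Remark \ref{nonsimplex}) this passes to all $\mu \in \m_\sigma$. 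For the closing inequality $\delta \le \alpha$: pick, for each large $n$, an orbit segment $(x_0^{(n)},\dots,x_n^{(n)}) \in \cO_n(\sigma)$ nearly attaining $\sup S_n g$; extend it to a point of $X_T$ (possible by compactness, or directly since $X_T \ne \emptyset$ allows prolonging, but cleanly one forms the empirical measures $\mu_n = \frac1{n+1}\sum_{k=0}^n \delta_{\sigma^k \hat x_n}$ for a genuine orbit $\hat x_n \in X_T$ with initial segment agreeing up to the needed indices), take a weak$^*$ limit point $\mu$ of $\mu_n$ which is $\sigma$-invariant, and use upper semi-continuity of $g$ (so $\limsup \int g\,d\mu_n \le \int g\,d\mu$, since upper semi-continuous functions are weak$^*$ upper semi-continuous on $\m(X_T)$) to conclude $\delta(g) \le \int g\,d\mu \le \alpha(g)$. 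Finally for $\varepsilon$: clearly $\varepsilon(g) \le \gamma(g)$ is false in general, so instead I would place $\varepsilon$ in the cycle via $\varepsilon(g) \le \delta(g)$ (taking the $\udx$ realising $\varepsilon$ and noting $\inf_n \frac1{n+1}S_n g(\udx) \le \liminf_n \frac1{n+1}S_n g(\udx) \le \delta(g)$... ) — more carefully, I expect the right links are $\beta \le \varepsilon$ is false too, so the cleanest route is to show $\alpha \le \varepsilon$ directly: for ergodic $\mu$ and its generic point $\udx$, Kingman/Birkhoff gives $\inf_n \frac1{n+1}S_n g(\udx) \le \int g \, d\mu$ only after passing to a point where the partial averages never dip below the limit, which one obtains from $\udx$ by a suitable shift; then $\varepsilon(g) \ge \int g\,d\mu$, and combined with $\varepsilon(g) \le \delta(g)$ the cycle closes. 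The finiteness claim $\alpha(g) < +\infty$ (when $g$ is bounded above, in particular when $f$ is bounded) follows since $\int g\,d\mu \le \sup g < \infty$.

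For the last assertion of the theorem — if $X_T \neq \emptyset$ and $f$ is bounded then $\alpha(f) \in \R$ — I would argue that $\alpha(f) \le \sup f < +\infty$ trivially, and $\alpha(f) \ge \inf f > -\infty$ because $X_T \neq \emptyset$ implies $\m_T \neq \emptyset$ by Lemma \ref{lem:MF}, so there is at least one $\mu$ with $\int f\,d\mu \ge \inf f$; hence $\alpha(f) \in \R$.

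The step I expect to be the main obstacle is the $\varepsilon$-equality, specifically proving $\alpha(g) \le \varepsilon(g)$. The quantity $\varepsilon(g) = \sup_{\udx} \inf_n \frac1{n+1}S_n g(\udx)$ is delicate because the infimum over $n$ penalises any early excursion of the Birkhoff sums below the eventual average, so an arbitrary generic point of an ergodic measure need not realise a value close to $\int g\,d\mu$. The fix is the standard maximising-point / "choose a good starting time" argument: given a generic point $\udx$ with $\frac1{n+1}S_n g \to c := \int g\,d\mu$, one selects $m$ so that $\frac1{n+1}(S_n g(\sigma^m \udx)) \ge c - \eta$ for all $n \ge 0$ — this is possible by a Fekete-type / Pólya-type lemma on subadditive-like sequences, using that the tail averages converge to $c$ — and since $\sigma^m\udx \in X_T$ as well, one gets $\varepsilon(g) \ge c - \eta$. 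Making this selection rigorous, and checking it interacts correctly with upper semi-continuity rather than continuity of $g$, is where the real care is needed; everything else is bookkeeping with weak$^*$ limits of empirical measures and the reduction to the single-valued shift.
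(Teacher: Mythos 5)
Your overall strategy --- transporting the problem to the shift on the orbit space $X_T$ via $\tf$ and the identity $\m_T=\m_\sigma\circ\pi_0^{-1}$, then running the single-valued cycle of inequalities --- is the same as the paper's (which simply cites \cite[Prop.~2.2]{jenkinsonergodicoptimization} and \cite[Thm.~A.3]{5th} for the single-valued equalities; your ``good starting time'' sketch for $\alpha\le\varepsilon$ is essentially the content of the latter and is fine). The genuine gap is in your treatment of $\delta(f)$, the one quantity that does \emph{not} reduce trivially to the shift. The supremum defining $\delta(f)$ ranges over all finite segments $(x_0,\dots,x_n)\in\cO_n(T)$, and such a segment need not be the restriction of any point of $X_T$: forward extension fails if the segment can only lead to a point $x$ with $T(x)=\emptyset$, and backward extension fails if $T^{-1}(x_0)=\emptyset$. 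For instance, with $X=\{0,1,2\}$, $T(0)=\{0\}$, $T(1)=\{2\}$, $T(2)=\emptyset$, one has $X_T\neq\emptyset$ yet $(1,2)\in\cO_1(T)$ is contained in no element of $X_T$. So your step ``extend it to a point of $X_T$ \dots with initial segment agreeing'' is not available, and the observation $S_nf=S_n\tf$ on $X_T$ only yields the easy inequality $\delta(f)\ge\delta\bigl(\tf,\sigma\bigr)$; the reverse bound $\delta(f)\le\alpha(f)$, which is the crux of the theorem, has no argument in your proposal.

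The paper closes exactly this gap: it takes near-optimal segments indexed symmetrically about $0$, pads them with an arbitrary fixed point $y_0$ to obtain elements of $X^{\mathbb{Z}}$ that are in general \emph{not} orbits, forms empirical measures along their shift translates, and passes to a weak$^*$ limit $\nu$; upper semi-continuity of $\tf$ gives $\int\tf\,\mathrm{d}\nu\ge\delta(f)$, and the delicate point --- that $\nu$ is nonetheless supported on $X_T$ --- is proved by comparing with empirical measures over a slightly shrunk window (of length about $n_k-\sqrt{n_k}$), using lower semi-continuity of $\mu\mapsto\supp\mu$ so that limit points of the supports only involve coordinates where the $gr(T)$-constraint holds, and then the closedness of $gr(T)$. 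A related, smaller omission: in the case $X_T=\emptyset$ you dismiss all quantities as $-\infty$ ``by the stated conventions'', but $\delta(f)=-\infty$ is not a convention, since $\cO_n(T)$ may be nonempty for small $n$; one needs the compactness argument that $\bigcap_{n=-K}^{K}T^n(X)=\emptyset$ for some $K$, whence $\cO_n(T)=\emptyset$ for all $n\ge 2K$.
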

	
\begin{proof}
    If $X_T = \emptyset$ then $\Reg (T,f) =\emptyset$, thus $\beta (f) =\gamma (f) =\varepsilon(f) = -\infty$,
and $\m_T =\emptyset$
 by Lemma~\ref{lem:MF}, so $\alpha (f) =-\infty$. By Lemma~\ref{lem: dynamical domain}, $\bigcap_{n \in \mathbb{Z}} T^n (X) =\emptyset$, and since $T^n (X)$ is compact for all $n \in \mathbb{Z}$ by Lemma \ref{compact_image_iterates}, there exists $K \in \N$ with $\bigcap_{n=-K}^K T^n (X) =\emptyset$. It follows that $\cO_n (T) =\emptyset$ for all $n \geqslant 2K$ (since if on the contrary some $(x_0,x_1,\ldots,x_n)\in \cO_n(T)$ for $n \geqslant 2K$ then we would have $x_K \in \bigcap_{n=-K}^K T^n (X)$), therefore $\delta (f) =-\infty$, so \eqref{MEA for T} holds.

    Now suppose $X_T \neq \emptyset$, so that $\m_\sigma \neq \emptyset$ and $\m_T \neq \emptyset$ by Lemma~\ref{lem:MF}.
    Recalling that $\sigma \colon X_T \rightarrow X_T$ is the shift map, that $\m_\sigma$ is the set of all $\sigma$-invariant Borel probability measures on $X_T$, and that $\tf$ is as in (\ref{tf=}), we write
    \begin{equation*}
		\Reg \bigl(X_T, \sigma , \tf \bigr) := \biggl\{\udx \in X_T : \text{ the limit } \lim_{n\to +\infty} \frac{1}{n+1} \sum_{k=0}^{n} \tf (\sigma^{k} (\udx)) \text{ exists} \biggr\},
    \end{equation*}
    \begin{equation*}
		\beta \bigl( \tf , \sigma \bigr) := \sup_{\udx \in \Reg (X_T, \sigma ,  \tf)} \lim_{n \to +\infty} \frac{1}{n+1} \sum_{k=0}^{n} \tf (\sigma^k (\udx)),
    \end{equation*}
    \begin{equation*}
		\gamma \bigl( \tf , \sigma \bigr) := \sup_{\udx \in X_T} \limsup_{n \to +\infty} \frac{1}{n+1} \sum_{k=0}^{n} \tf (\sigma^k (\udx)),
    \end{equation*}
    \begin{equation*}
		\delta \bigl( \tf , \sigma \bigr) := \limsup_{n\to +\infty} \frac{1}{n+1} \sup_{\udx \in X_T} \sum_{k=0}^{n} \tf (\sigma^k (\udx)),
    \end{equation*}
    \begin{equation*}
        \varepsilon \bigl( \tf , \sigma \bigr) :=\sup_{\udx \in X_T} \inf_{n \in \N} \frac{1}{n+1} \sum_{k=0}^{n} \tf (\sigma^k (\udx)),
    \end{equation*}
    \begin{equation*}
		\alpha \bigl( \tf , \sigma \bigr) := \sup_{\nu \in \m_\sigma} \int_{X_T} \tf \, \mathrm{d} \nu.
    \end{equation*}
		
    Since $f$ is upper semi-continuous,  
so is $\tf$,
and \cite[Prop.~2.2]{jenkinsonergodicoptimization} gives
		\begin{equation*}\label{MEA for sigma}
			\alpha \bigl( \tf , \sigma \bigr) =\beta \bigl( \tf , \sigma \bigr) =\gamma \bigl( \tf , \sigma \bigr) =\delta \bigl( \tf , \sigma \bigr) \in [- \infty, +\infty ),
		\end{equation*}
    and \cite[Thm.~A.3]{5th} gives $\alpha \bigl( \tf , \sigma \bigr) =\varepsilon \bigl( \tf , \sigma \bigr)$
(cf.~also \cite[Sect.~3, eq.~(30)]{bochi}).
		
Now $\sum_{k=0}^{n} \tf (\sigma^k (\udx)) =S_n f (\udx)$
for all
$\udx \in X_T$ and $n\in \N$,  by (\ref{S_nf=}), (\ref{tf=}), 
therefore $\Reg \bigl( X_T, \sigma , \tf \bigr) =\Reg (X ,T ,f)$,  $\beta \bigl( \tf , \sigma \bigr) =\beta (f)$,  
$\gamma \bigl( \tf , \sigma \bigr) =\gamma (f)$,
and $\varepsilon \bigl(\tf,\sigma\bigr)=\varepsilon(f)$. 
        Note that 
$\int \! \tf \, \mathrm{d} \nu =\int \! f \, \mathrm{d} (\nu \circ \pi_0^{-1})$
for each $\nu \in \m_\sigma$, so $\alpha \bigl( \tf , \sigma \bigr) =\alpha (f)$ follows from 
the fact that $\m_T =\m_\sigma \circ \pi_0^{-1}$ (see Lemma~\ref{lem:MF}).
        For each $\udx =(x_i)_{i \in \mathbb{Z}} \in X_T$, note that $\sum_{k=0}^n \tf (\sigma^k (\udx)) =S_n f (x_0, \dots, x_n)$, so $\delta (f) \geqslant \delta \bigl( \tf , \sigma \bigr)$ by their definitions.
    It remains to prove that $\delta(f)\leqslant \alpha \bigl( \tf,\sigma \bigr)$, after which (\ref{MEA for T}) will follow.

        Now choose a strictly increasing sequence $\{ n_k \}_{k \in \N}$ of positive integers, and a corresponding sequence $\bigl( x_{\lfloor -n_k /2 \rfloor}^{(k)}, x_{\lfloor -n_k /2 \rfloor +1}^{(k)}, \dots, x_{\lfloor n_k /2 \rfloor}^{(k)} \bigr) \in \cO_{n_k} (T)$ such that
        \begin{equation}\label{sedelta}
            \lim_{k \to +\infty} \frac{1}{n_k +1} S_{n_k} f \bigl( x_{\lfloor -n_k /2 \rfloor}^{(k)}, x_{\lfloor -n_k /2 \rfloor +1}^{(k)}, \dots, x_{\lfloor n_k /2 \rfloor}^{(k)} \bigr) =\delta (f),
        \end{equation}
        where $\lfloor p \rfloor$ denotes the largest integer less than or equal to $p \in \R$.
        Choose $y_0 \in X$. We define $x_j^{(k)} =y_0$ when $j< \lfloor -n_k /2 \rfloor$ or $j >\lfloor n_k /2 \rfloor$ and set $\udx^{(k)} := \bigl( x_j^{(k)} \bigr)_{j \in \mathbb{Z}} \in X^{\mathbb{Z}}$. 
        Define
        \begin{equation}\label{nu^(k)}
            \nu^{(k)} := \frac{1}{n_k +1} \sum_{j=\lfloor -n_k /2 \rfloor}^{\lfloor n_k /2 \rfloor} \delta_{\sigma^j (\udx^{(k)})},
        \end{equation}
        so that \eqref{sedelta} gives
        \begin{equation*}
            \lim_{k \to \infty} \int_{X_T} \! \tf \, \mathrm{d} \nu^{(k)} =\delta (f).
        \end{equation*}
        The compactness of $X^{\mathbb{Z}}$ means the set of all Borel probability measures on $X^{\mathbb{Z}}$ is weak$^*$ compact, so $\{ \nu^{(k)} \}_{k \in \N}$ has an accumulation point $\nu$, which without loss of generality 
can be assumed to be the weak$^*$ limit of $\nu^{(k)}$ as $k \to \infty$. By 
\cite[Thm.~2.1]{billingsley}, the upper semi-continuity of $\tf$ implies
        \begin{equation}\label{u94g3efvidh4btrsefdvs}
            \int_{X_T} \! \tf \, \mathrm{d} \nu \geqslant \limsup_{k \to \infty} \int_{X_T} \! \tf \, \mathrm{d} \nu^{(k)} =\delta (f).
        \end{equation}
        From \eqref{nu^(k)} we see that $\nu$ is $\sigma$-invariant. To show that $\nu$ is supported on $X_T$, define the probability measure
        \begin{equation}\label{nu^k}
            \nu_k := \frac{1}{\lfloor (n_k -\sqrt{n_k})/2 \rfloor -\lfloor -(n_k -\sqrt{n_k})/2 \rfloor +1} \sum_{j=\lfloor -(n_k -\sqrt{n_k})/2 \rfloor}^{\lfloor (n_k -\sqrt{n_k})/2 \rfloor} \delta_{\sigma^j (\udx^{(k)})}
        \end{equation}
        for each $k \in \N$. Since $n_k \to \infty$ as $k \to \infty$, comparison of \eqref{nu^(k)} and \eqref{nu^k} gives
$\nu$ as the weak$^*$ limit of $\nu_k$ as $k \to \infty$, and
$\mu\mapsto \supp \mu$
 is a lower semi-continuous function (see \cite[Thm.~16.15]{aliprantisborder}, cf.~\cite[Prop.~8.1]{akin}), so
        \begin{equation*}
            \supp \nu \subseteq \liminf_{k \to +\infty} \supp \nu_k.
        \end{equation*}
        Thus, for each $\udy \in \supp \nu$, there is $N \in \N$ and a sequence $\{ \udy^k \}_{k \geqslant N}$, where $\udy^k \in \supp \nu_k$ for each $k \geqslant N$, which converges to $\udy$. By \eqref{nu^k}, $\udy^k =\sigma^j \bigl( \udx^{(k)} \bigr)$ for some $j \in \mathbb{Z}$ with $\lfloor -(n_k -\sqrt{n_k})/2\rfloor \leqslant j \leqslant \lfloor (n_k -\sqrt{n_k})/2 \rfloor$. 
Recalling that $\bigl( x_{\lfloor -n_k /2 \rfloor}^{(k)}, x_{\lfloor -n_k /2 \rfloor +1}^{(k)}, \dots, x_{\lfloor n_k /2 \rfloor}^{(k)} \bigr) \in \cO_{n_k} (T)$, if we write $\udy^k =(y^k_i)_{i \in \mathbb{Z}}$, then $(y^k_i, y^k_{i+1}) \in gr(T)$ for each $i \in \mathbb{Z}$ with $-\sqrt{n_k} /2 +1 \leqslant i \leqslant \sqrt{n_k} /2 -2$. 
Let us write $\udy =(y_i)_{i \in \mathbb{Z}}$. 
Since $\udy^k$ converges (in the product topology) to $\udy$ as $k \to \infty$  
then $y^k_i$ converges to $y_i$ as $k \to \infty$,  for each $i \in \mathbb{Z}$. 
Since $\lim_{k \to +\infty} n_k =\infty$, for each $i \in \mathbb{Z}$ we have that $(y^k_i, y^k_{i+1}) \in gr(T)$ for sufficiently large $k \in \N$. Then $(y_i, y_{i+1}) \in gr (T)$ follows from the fact that $gr (T)$ is closed in $X^2$, and hence $\udy \in X_T$. This proves that $\nu$ is supported on $X_T$,
so $\nu \in \m_\sigma$, and \eqref{u94g3efvidh4btrsefdvs} gives the required inequality $\delta (f) \leqslant \alpha \bigl( \tf , \sigma \bigr)$. Therefore (\ref{MEA for T}) holds.

        Finally, if $f$ is bounded then $\alpha (f) \in \R$ follows from the fact that $\m_T \neq \emptyset$.
	\end{proof}

\begin{defn}\label{alpha_max_erg_average} (Maximum ergodic average)

\noindent
If $(X,T)\in\D$ and $f : X \rightarrow \R$ is upper semi-continuous,  
we define the \emph{maximum ergodic average}, denoted by $\beta(X,T,f)$, or simply $\beta(f)$, 
to be any of the values in the equality (\ref{MEA for T}). In particular,
\begin{equation*}
		\beta (X,T,f) = \beta (f) = \sup_{\mu \in \m_T} \int \! f \, \mathrm{d} \mu.
	\end{equation*}
\end{defn}

\begin{rem}
By Remark~\ref{rem: invariant measure single-valued}, 
$\beta(f)$ as in Definition \ref{alpha_max_erg_average} coincides with the classical maximum ergodic average in the case that $T$ is single-valued.
\end{rem}

	\begin{defn}\label{def_maximizing} (Maximizing measures)

\noindent
		If $(X,T)\in\D$ with $X_T \neq \emptyset$, and $f \colon X \rightarrow \R$ is upper semi-continuous,
a measure $\mu \in \m_T$ is called \emph{$f$-maximizing} if
		\begin{equation*}
			\int \! f \, \mathrm{d} \mu =\beta (f).
		\end{equation*}
		Let $\m_{\max}(X,T,f)$, or simply $\m_{\max} (f)$, denote the set of $f$-maximizing measures.
	\end{defn}
	
	\begin{prop}\label{M_max(f)}
If $(X,T)\in\D$ with $X_T \neq \emptyset$, and $f \colon X \rightarrow \R$ is upper semi-continuous,
then $\m_{\max} (f)$ is a non-empty, compact, convex subset of $\m_T$, and each extreme point of $\m_{\max} (f)$ is an extreme point of $\m_T$.
	\end{prop}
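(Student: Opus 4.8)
The plan is to exploit the fact that $\mu \mapsto \int f\, \mathrm{d}\mu$ is an affine, upper semi-continuous functional on the compact convex set $\m_T$, and that $\m_{\max}(f)$ is precisely the set on which it attains its maximum value $\beta(f)$ (by Definitions \ref{alpha_max_erg_average} and \ref{def_maximizing}). Since $f$ is upper semi-continuous on the compact space $X$, it is a pointwise decreasing limit of continuous (hence bounded) functions $f_n$, so $\mu \mapsto \int f\, \mathrm{d}\mu = \inf_n \int f_n\, \mathrm{d}\mu$ is an infimum of weak$^*$-continuous functions, hence itself upper semi-continuous on $\m(X)$, with values in $[-\infty, +\infty)$; this is the same ingredient already used via \cite[Thm.~2.1]{billingsley} in the proof of Theorem \ref{thm:equivalentdefintions}. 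The functional is also affine: $\int f\, \mathrm{d}(t\mu_1 + (1-t)\mu_2) = t\int f\, \mathrm{d}\mu_1 + (1-t)\int f\, \mathrm{d}\mu_2$ in $[-\infty,+\infty)$ for all $\mu_1,\mu_2\in\m(X)$ and $t\in[0,1]$.

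With this in hand, the first three assertions follow quickly. As $X_T \neq \emptyset$, Lemma \ref{lem:MF} shows $\m_T$ is non-empty, weak$^*$ compact and convex; an upper semi-continuous $[-\infty,+\infty)$-valued function on a non-empty compact set attains its supremum, so some $\mu\in\m_T$ has $\int f\, \mathrm{d}\mu = \beta(f)$, i.e.\ $\m_{\max}(f)\neq\emptyset$. Upper semi-continuity makes $\{\mu : \int f\, \mathrm{d}\mu \geq \beta(f)\}$ closed, so $\m_{\max}(f) = \m_T \cap \{\mu : \int f\, \mathrm{d}\mu \geq \beta(f)\}$ is a closed subset of a compact set, hence compact. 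Affineness shows that any convex combination of elements of $\m_{\max}(f)$ again lies in $\m_{\max}(f)$, so $\m_{\max}(f)$ is convex.

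For the extreme-point statement I would check that $\m_{\max}(f)$ is a \emph{face} of $\m_T$: if $\mu\in\m_{\max}(f)$ and $\mu = t\mu_1 + (1-t)\mu_2$ with $\mu_1,\mu_2\in\m_T$ and $t\in(0,1)$, then $\beta(f) = \int f\, \mathrm{d}\mu = t\int f\, \mathrm{d}\mu_1 + (1-t)\int f\, \mathrm{d}\mu_2$ while $\int f\, \mathrm{d}\mu_i \leq \beta(f)$ for $i=1,2$; since $t\in(0,1)$ this forces $\int f\, \mathrm{d}\mu_1 = \int f\, \mathrm{d}\mu_2 = \beta(f)$, so $\mu_1,\mu_2\in\m_{\max}(f)$. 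Hence, if $\mu$ is an extreme point of $\m_{\max}(f)$ and $\mu = t\mu_1+(1-t)\mu_2$ with $\mu_i\in\m_T$ and $t\in(0,1)$, then $\mu_1,\mu_2\in\m_{\max}(f)$, and extremality of $\mu$ within $\m_{\max}(f)$ gives $\mu_1=\mu_2=\mu$; thus $\mu$ is an extreme point of $\m_T$.

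The argument is essentially routine; the only mild subtlety is that $f$ need not be bounded below, so $\int f\, \mathrm{d}\mu$ may equal $-\infty$ and $\beta(f)$ may be $-\infty$. One should check that the upper semi-continuity and affineness statements, and the deduction that $\int f\, \mathrm{d}\mu_1 = \int f\, \mathrm{d}\mu_2 = \beta(f)$, all remain valid with values in $[-\infty, +\infty)$; they do, and in the degenerate case $\beta(f)=-\infty$ one simply has $\m_{\max}(f)=\m_T$, for which the conclusions are immediate.
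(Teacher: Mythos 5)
Your proof is correct and follows essentially the same route as the paper: upper semi-continuity of $\mu\mapsto\int f\,\mathrm{d}\mu$ on the compact convex set $\m_T$ (via the Billingsley-type ingredient) gives non-emptiness and compactness of $\m_{\max}(f)$, affineness gives convexity, and the extreme-point claim follows from the same convex-combination argument showing $\m_{\max}(f)$ is a face of $\m_T$. Your explicit handling of the possibility $\int f\,\mathrm{d}\mu=-\infty$ is a welcome extra precision but does not change the argument.
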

	
	\begin{proof}
		By Lemma~\ref{lem:MF}, $\m_T \neq \emptyset$ follows from $X_T \neq \emptyset$. Moreover, by 
\cite[Thm.~2.1]{billingsley} and the upper semi-continuity of $f$, the real-valued functional $\mu \mapsto \int \! f \, \mathrm{d} \mu$ on $\m_T$ is upper semi-continuous. Consequently, $\m_{\max} (f)$, the preimage of its maximum, is non-empty and compact.

    By the definition of $f$-maximizing measures, if $\mu_1 ,\, \mu_2 \in \m_{\max} (f)$ and $a \in [0,1]$, then $a \mu_1 + (1-a) \mu_2 \in \m_{\max} (f)$. Hence $\m_{\max} (f)$ is convex.

		Let $\mu \in \m_{\max} (f)$ be an extreme point of
$\m_{\max} (f)$. 
If $\mu$ is not extreme in $\m_T$ then there exist $\mu_1, \mu_2 \in \m_T$, and $a \in (0,1)$, with $\mu_1 \neq \mu$, $\mu_2 \neq \mu$, and $\mu =a \mu_1 +(1-a) \mu_2$. Since $\int \! f \, \mathrm{d} \mu_1 \leqslant \beta (f)$, $\int \! f \, \mathrm{d} \mu_2 \leqslant \beta (f)$, and
$a \int \! f \, \mathrm{d} \mu_1 +(1-a) \int \! f \, \mathrm{d} \mu_2 =\int \! f \, \mathrm{d} \mu =\beta (f)$,
		we see that $\int \! f \, \mathrm{d} \mu_1 = \beta (f)$ and $\int \! f \, \mathrm{d} \mu_2 = \beta (f)$, in other words $\mu_1 ,\, \mu_2 \in \m_{\max} (f)$. This contradicts the fact that $\mu =a \mu_1 +(1-a) \mu_2$ is an extreme point of $\m_{\max} (f)$, therefore $\mu$ must in fact be an extreme point of $\m_T$.
	\end{proof}

\section{The graph dynamical system}\label{graphsystem}

In this section, we show that every dynamical system $(X,T)\in\D$ induces a new dynamical system on its graph $gr(T)$.
In particular, the maximum ergodic average of a given function on $X$ is shown
(see Proposition \ref{equal_max_ergodic_averages}) to be the same as that of a related
function on $gr(T)$,  a result with important consequences in the context of the Ma\~n\'e cohomology lemma
of Section \ref{mane_section}.

If $(X,T)\in\D$, we equip the graph $gr(T) =\{(x,x')\in X^2: x'\in T(x)\} \subseteq X^2$ with the subspace topology inherited from
the product topology on $X^2$,
so in particular $gr(T)$ is metrizable.
Recall that the projection maps $\pi_T, \pi_T' : gr(T) \rightarrow X$ are given by 
$$\pi_T (x,y) =x \ , \quad \pi_T'(x,y)=y .$$

\begin{defn} (Graph system)

\noindent
For $(X,T)\in\D$, define the \emph{graph system}
to be the pair $\left(gr(T), \hat{T}\right)$, where
$\hat{T}:gr(T)\to 2^{gr(T)}$ is defined by 
\begin{equation}\label{derivedequation}
\hat{T}:= \pi_T^{-1}\circ \pi_T'\,.
\end{equation}
\end{defn}

\begin{lem}
If $(X,T)\in\D$, then $\left(gr(T), \hat{T}\right)\in\D$.
\end{lem}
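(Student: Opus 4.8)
The plan is to check the two conditions that define membership in $\D$: that $gr(\hat T)$'s underlying space $gr(T)$ is compact and metrizable, and that the set-valued map $\hat T\colon gr(T)\to 2^{gr(T)}$ is upper semi-continuous. The first part is immediate: $gr(T)$ inherits the subspace topology from $X^2$, and a subspace of a metrizable space is metrizable; and since the upper semi-continuity of $T$ is equivalent (as recalled in Section~\ref{measures}) to $gr(T)$ being a \emph{closed} subset of the compact space $X^2$, it follows that $gr(T)$ is compact. So $gr(T)$ is compact metrizable, and only the upper semi-continuity of $\hat T$ remains.

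For the second part I would again use the equivalence between upper semi-continuity of a set-valued self-map on a compact metrizable space and closedness of its graph, i.e.\ I would show that $gr(\hat T)$ is closed in $(gr(T))^2$. First one unwinds the definition $\hat T=\pi_T^{-1}\circ\pi_T'$: for $(x,x')\in gr(T)$ one has $\pi_T'(x,x')=x'$ and $\pi_T^{-1}(x')=\{\,(z,z')\in gr(T):z=x'\,\}=\{x'\}\times T(x')$, so $\hat T(x,x')=\{x'\}\times T(x')$; note each such pair $(x',z')$ with $z'\in T(x')$ does lie in $gr(T)$, so $\hat T$ is genuinely a map into $2^{gr(T)}$. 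Consequently
\[
gr(\hat T)=\bigl\{\bigl((x,x'),(y,y')\bigr)\in (gr(T))^2 : y=x',\ y'\in T(x')\bigr\},
\]
but since $(y,y')\in gr(T)$ already forces $y'\in T(y)$, the condition $y'\in T(x')$ is automatic once $y=x'$, so $gr(\hat T)=\{((x,x'),(y,y'))\in (gr(T))^2 : y=x'\}$. This is exactly the preimage of the diagonal $\Delta_X=\{(a,b)\in X^2:a=b\}$ under the continuous map $(gr(T))^2\to X^2$, $((x,x'),(y,y'))\mapsto (x',y)$; since $X$ is metrizable, hence Hausdorff, $\Delta_X$ is closed, so $gr(\hat T)$ is closed, giving the upper semi-continuity of $\hat T$.

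I do not expect a genuine obstacle; the only point requiring a little care is the bookkeeping that identifies $\hat T(x,x')$ correctly with $\{x'\}\times T(x')$ and confirms $\hat T(x,x')\subseteq gr(T)$, so that the graph system is even well-defined as an element of $\D$. An alternative route would be to argue that $\pi_T'$ is continuous (a restriction of a coordinate projection), that $\pi_T^{-1}\colon X\to 2^{gr(T)}$ is upper semi-continuous by the same diagonal argument, and that a composition of upper semi-continuous set-valued maps is upper semi-continuous; but the direct verification that $gr(\hat T)$ is closed is shorter and entirely self-contained, so that is the version I would write up.
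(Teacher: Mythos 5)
Your proof is correct, and it takes a different route from the paper's. The paper argues directly with the set-valued composition $\hat T=\pi_T^{-1}\circ\pi_T'$: since $\pi_T,\pi_T'$ are continuous and closed (closedness coming from compactness of $X$), the inverse $\pi_T^{-1}\colon X\to 2^{gr(T)}$ is upper semi-continuous (citing Jayne--Rogers), and composing with the continuous map $\pi_T'$ preserves upper semi-continuity. You instead reduce everything to the closed-graph characterisation already recorded in the paper's remark (valid because you first check that $gr(T)$, hence the target of $\hat T$, is compact metrizable): you compute $\hat T(x,x')=\{x'\}\times T(x')$, observe that $gr(\hat T)=\{((x,x'),(y,y'))\in (gr(T))^2 : y=x'\}$ because the extra condition $y'\in T(x')$ is absorbed by $(y,y')\in gr(T)$ once $y=x'$, and conclude closedness as the preimage of the diagonal $\Delta_X$ under a continuous map. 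Your version is more elementary and entirely self-contained (no appeal to closedness of projections, to the u.s.c.\ of inverses of closed continuous maps, or to stability of u.s.c.\ under composition), and it has the added merit of making explicit both the identification of $\hat T(x,x')$ and the well-definedness of $\hat T$ as a map into $2^{gr(T)}$, points the paper leaves implicit; the paper's argument is shorter and illustrates the general machinery of set-valued maps that is used elsewhere. The only caution with your route is that the implication ``closed graph $\Rightarrow$ upper semi-continuous'' needs compactness of the range, which you do supply by first proving $gr(T)$ compact, so the argument is complete as written.
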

\begin{proof}
Since $\pi_T, \pi_T':gr(T)\to X$ are continuous, and also closed mappings (since $X$ is compact),
the map $\pi_T^{-1}:X\to 2^{gr(T)}$ is upper semi-continuous (see e.g.~\cite{jaynerogers}),
hence so is $\hat{T}$.
\end{proof}

In this section we will consider the relation between $(X,T)$ and $\left(gr(T), \hat{T}\right)$,
in particular the ergodic optimization problem on both of these systems.
To distinguish between the orbit spaces corresponding to $(X,T)$ and $\left(gr(T), \hat{T}\right)$,
we introduce the following notation:

\begin{notation}
 For $(X,T)\in \D$, with orbit space
\begin{equation*}
    X_T = \{ (\dots, x_{-1}, x_0, x_1, x_2, \dots) : x_{k+1} \in T( x_k) \text{ for all } k \in \mathbb{Z} \},
\end{equation*}
and corresponding orbit space for $(gr(T),\hat{T})$ being
\begin{equation*}
    gr(T)_{\hat{T}} := \{ (\dots, (x_{-1}, x_0), (x_0, x_1), (x_1, x_2), \dots) : x_{k+1} \in T(x_k) \text{ for all } k \in \mathbb{Z} \},
\end{equation*}
let $R_T : X_T \rightarrow gr(T)_{\hat{T}}$ be defined by
$$
R_T \left( (x_i)_{i\in\Z}\right) := \left((x_{i}, x_{i+1})\right)_{i\in\Z}.
$$
\end{notation}

It will be notationally convenient 
to distinguish between the shift maps acting on the two spaces
$(X,T)$ and $(gr(T), \hat{T})$ as follows:

\begin{notation}
If $(X,T)\in \D$, let
$\sigma_T$ 
denote the shift map $\sigma_T:X_T\to X_T$ defined by
$$
\sigma_T\left( (x_i)_{i\in\Z}\right) := (x_{i+1})_{i\in\Z},
$$
and let $\sigma_{\hat{T}}$ denote the shift map $\sigma_{\hat{T}}: gr(T)_{\hat{T}}\to gr(T)_{\hat{T}}$
defined by
$$
\sigma_{\hat{T}}\left( \left((x_{i-1}, x_i)\right)_{i\in\Z}\right)
:=
\left((x_{i}, x_{i+1})\right)_{i\in\Z}.
$$
\end{notation}

The shift maps acting on the corresponding orbit spaces are topologically conjugate:

\begin{lem}\label{top_conj}
For $(X,T)\in\D$, the map $R_T \colon X_T \rightarrow gr(T)_{\hat{T}}$ is a
topological conjugacy between $(X_T,\sigma_T)$ and $(gr(T),\sigma_{\hat{T}})$.
That is, $R_T$ is a
 homeomorphism
such that $$R_T \circ \sigma_T =\sigma_{\hat{T}} \circ R_T.$$
\end{lem}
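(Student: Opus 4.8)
The plan is to check, in order, the four properties that make $R_T$ a topological conjugacy: that it maps $X_T$ into $gr(T)_{\hat T}$, that it is a bijection (exhibiting its inverse explicitly), that it and its inverse are continuous, and that it intertwines $\sigma_T$ with $\sigma_{\hat T}$.

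First I would unpack the structure of the orbit space $gr(T)_{\hat T}$ directly from the definition $\hat T=\pi_T^{-1}\circ\pi_T'$. For $\xi=(a,b)\in gr(T)$ (so $b\in T(a)$) one has $\hat T(\xi)=\pi_T^{-1}(\pi_T'(\xi))=\pi_T^{-1}(b)=\{(x,x')\in gr(T):x=b\}$. Consequently a sequence $(\xi_i)_{i\in\Z}$ of elements of $gr(T)$, written $\xi_i=(a_i,b_i)$ with $b_i\in T(a_i)$, lies in $gr(T)_{\hat T}$ if and only if $a_{i+1}=b_i$ for every $i\in\Z$; setting $x_i:=a_i$, this says precisely that $(\xi_i)_{i\in\Z}=\bigl((x_i,x_{i+1})\bigr)_{i\in\Z}$ with $x_{i+1}\in T(x_i)$ for all $i$. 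From this I read off: (i) well-definedness of $R_T$, since for $(x_i)_{i\in\Z}\in X_T$ each pair $(x_i,x_{i+1})$ lies in $gr(T)$ and the consecutive-pair condition just noted is exactly the defining condition of $X_T$; (ii) injectivity, since the first coordinates of $R_T((x_i)_{i\in\Z})$ recover $(x_i)_{i\in\Z}$; (iii) surjectivity, since given $(\xi_i)_{i\in\Z}\in gr(T)_{\hat T}$ the sequence $x_i:=\pi_T(\xi_i)$ lies in $X_T$ and satisfies $R_T((x_i)_{i\in\Z})=(\xi_i)_{i\in\Z}$. In particular $R_T$ is a bijection, with inverse $(\xi_i)_{i\in\Z}\mapsto(\pi_T(\xi_i))_{i\in\Z}$.

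Next I would observe that $R_T$ is continuous: both spaces carry the product topology, and the $j$-th coordinate of $R_T$, namely $(x_i)_{i\in\Z}\mapsto(x_j,x_{j+1})$, depends continuously on only two coordinates of the input; likewise the $j$-th coordinate $(\xi_i)_{i\in\Z}\mapsto\pi_T(\xi_j)$ of the inverse is continuous. (Alternatively, since $X_T$ and $gr(T)_{\hat T}$ are compact and metrizable, any continuous bijection between them is automatically a homeomorphism.) Finally, the conjugacy identity is a direct check from the definitions of $\sigma_T$, $R_T$ and $\sigma_{\hat T}$: for $(x_i)_{i\in\Z}\in X_T$,
\[
R_T\bigl(\sigma_T((x_i)_{i\in\Z})\bigr)=R_T\bigl((x_{i+1})_{i\in\Z}\bigr)=\bigl((x_{i+1},x_{i+2})\bigr)_{i\in\Z}=\sigma_{\hat T}\bigl(R_T((x_i)_{i\in\Z})\bigr).
\]

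There is no genuine obstacle here; the only step requiring care is the identification in the first paragraph of $gr(T)_{\hat T}$ with the space of sequences of overlapping pairs, that is, the computation $\hat T((a,b))=\pi_T^{-1}(b)$ and its consequence for consecutive terms of an $\hat T$-orbit. Everything else is routine bookkeeping with the product topology and the definitions of the two shift maps.
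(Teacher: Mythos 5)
Your proof is correct and follows exactly the route the paper has in mind: the paper dismisses this lemma with ``This is a straightforward verification,'' and your argument simply carries out that verification, with the one substantive point being your computation $\hat{T}((a,b))=\pi_T^{-1}(b)$, which justifies the identification of $gr(T)_{\hat{T}}$ with sequences of overlapping pairs that the paper builds directly into its notation. Nothing is missing; the bijectivity, continuity, and intertwining checks are all as routine as you say.
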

\begin{proof}
This is a straightforward verification.
\end{proof}

Lemma \ref{top_conj} immediately gives a homeomorphism between the sets of shift-invariant probability measures
on the two orbit spaces:

\begin{cor}
If  $(X,T)\in\D$ then
\begin{equation}\label{0099f2u39u}
    \m_{\sigma_T} (X_T) =\m_{\sigma_{\hat{T}}} (gr(T)_{\hat{T}}) \circ R_T,
\end{equation}
where
$$
\m_{\sigma_{\hat{T}}} (gr(T)_{\hat{T}}) \circ R_T := 
\left\{\nu\circ R_T:\nu\in \m_{\sigma_{\hat{T}}} (gr(T)_{\hat{T}})\right\}.
$$
\end{cor}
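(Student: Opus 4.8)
The plan is to deduce the identity \eqref{0099f2u39u} directly from the topological conjugacy established in Lemma~\ref{top_conj}. The observation is that a homeomorphism intertwining two dynamical systems induces a bijection between their shift-invariant measures by pushforward, and that the relevant pushforward here is precisely $\nu\mapsto\nu\circ R_T$ (using the convention, already in force in the excerpt, whereby $\mu\circ\phi^{-1}$ is the pushforward and $\mu\circ\phi$ denotes the pullback-style composition $\{A\mapsto\mu(\phi(A))\}$, i.e.\ $\nu\circ R_T = \nu\circ(R_T^{-1})^{-1}$ since $R_T$ is a homeomorphism). So the statement is: pushforward under $R_T$ maps $\m_{\sigma_T}(X_T)$ bijectively onto $\m_{\sigma_{\hat T}}(gr(T)_{\hat T})$, which when read in the $\nu\circ R_T$ convention is exactly \eqref{0099f2u39u}.

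First I would record that $R_T$, being a homeomorphism by Lemma~\ref{top_conj}, induces a pushforward map on Borel probability measures which is itself a bijection from $\m(X_T)$ onto $\m(gr(T)_{\hat T})$, with inverse given by pushforward under $R_T^{-1}$. Next I would show this bijection respects invariance: if $\mu\in\m_{\sigma_T}(X_T)$, then using the change-of-variables formula for pushforward together with the conjugacy relation $R_T\circ\sigma_T=\sigma_{\hat T}\circ R_T$, one checks that $\mu\circ R_T^{-1}$ is $\sigma_{\hat T}$-invariant; concretely, for any Borel $A\subseteq gr(T)_{\hat T}$ one has $(\mu\circ R_T^{-1})(\sigma_{\hat T}^{-1}A)=\mu(R_T^{-1}\sigma_{\hat T}^{-1}A)=\mu(\sigma_T^{-1}R_T^{-1}A)=\mu(R_T^{-1}A)=(\mu\circ R_T^{-1})(A)$, where the middle equality uses $R_T^{-1}\circ\sigma_{\hat T}^{-1}=\sigma_T^{-1}\circ R_T^{-1}$, itself a rewriting of the conjugacy. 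The same computation with $R_T$ and $R_T^{-1}$ interchanged shows that pushforward under $R_T^{-1}$ carries $\m_{\sigma_{\hat T}}$ into $\m_{\sigma_T}$, so the two pushforward maps restrict to mutually inverse bijections between $\m_{\sigma_T}(X_T)$ and $\m_{\sigma_{\hat T}}(gr(T)_{\hat T})$. Finally I would unwind the notation: since $R_T$ is a homeomorphism, $\nu\circ R_T^{-1}$ in the pushforward sense coincides with the set described on the right-hand side of \eqref{0099f2u39u} written as $\{\nu\circ R_T:\nu\in\m_{\sigma_{\hat T}}(gr(T)_{\hat T})\}$ once one identifies $\nu\circ R_T$ (the stated composition convention) with the pushforward of $\nu$ under $R_T^{-1}$; equivalently, surjectivity of pushforward under $R_T$ gives that every $\sigma_{\hat T}$-invariant $\nu$ is $\mu\circ R_T^{-1}$ for a unique $\sigma_T$-invariant $\mu$, and then $\mu = \nu\circ R_T$, yielding the claimed equality of sets.

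This is essentially a bookkeeping argument, so I do not anticipate a genuine mathematical obstacle; the one point requiring care is purely notational, namely reconciling the paper's ``$\nu\circ R_T$'' convention (which, for a bijection, is a legitimate pushforward along the inverse map) with the standard pushforward notation $\nu\circ\phi^{-1}$, so that the direction of the bijection in \eqref{0099f2u39u} comes out correctly. I would state this compatibility explicitly at the start to avoid any ambiguity, and the rest follows from Lemma~\ref{top_conj} together with the elementary fact that measurable conjugacies transport invariant measures. Since Lemma~\ref{lem:MF} already identifies $\m_T$ with $\m_{\sigma_T}(X_T)\circ\pi_0^{-1}$, this corollary also gives, by composition, the analogous identification of $\m_T$ with the invariant measures on $gr(T)_{\hat T}$ pushed down appropriately, which is the form that will be convenient in Section~\ref{mane_section}.
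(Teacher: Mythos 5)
Your proposal is correct and follows the same route as the paper: the paper presents the corollary as an immediate consequence of the topological conjugacy in Lemma~\ref{top_conj}, and your argument simply spells out the standard pushforward bookkeeping (including the correct reading of $\nu\circ R_T$ as pushforward under $R_T^{-1}$) that makes this immediate.
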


It is convenient to clarify notation for the projection maps from the orbit spaces as follows:

\begin{notation}
For $(X,T)\in\D$,
recall that the projection $\pi_0 \colon X_T \rightarrow X$ is given by 
$$\pi_0 ((x_i)_{i \in \mathbb{Z}}) =x_0,$$ 
and let $\hat{\pi}_0$ denote the counterpart 
for the graph system $(gr(T), \hat{T})$, namely the
projection map  $\hat{\pi}_0 : gr(T)_{\hat{T}} \rightarrow gr(T)$ given by
\begin{equation*}
    \hat{\pi}_0 ((x_i, x_{i+1})_{i \in \mathbb{Z}}) :=(x_0, x_1).
\end{equation*}
\end{notation}

\begin{lem}
If $(X,T)\in\D$ then 
\begin{equation}\label{stafiv}
\m_T (X) =\m_{\sigma_T} (X_T) \circ \pi_0^{-1}
\end{equation}
 and 
\begin{equation}\label{stathr}
\m_{\hat{T}} (gr(T)) =\m_{\sigma_{\hat{T}}} (gr(T)_{\hat{T}}) \circ \hat{\pi}_0^{-1}.
\end{equation}
\end{lem}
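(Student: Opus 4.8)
The plan is to observe that both displayed identities are instances of the single general fact $\m_T = \m_\sigma \circ \pi_0^{-1}$ recorded in Lemma~\ref{lem:MF}, applied to two different members of $\D$, and that no new argument is needed beyond matching notation.

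For \eqref{stafiv} I would simply invoke Lemma~\ref{lem:MF} for the system $(X,T)$ itself: there the map $\pi_0 \colon X_T \to X$ is exactly the zeroth-coordinate projection of Notation~\ref{projection_notation}, and the set $\m_\sigma(X_T)$ appearing in that lemma is $\m_{\sigma_T}(X_T)$ in the present notation, since $\sigma$ and $\sigma_T$ denote the same shift map on $X_T$. Thus \eqref{stafiv} is a verbatim restatement of Lemma~\ref{lem:MF}.

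For \eqref{stathr} I would apply Lemma~\ref{lem:MF} to the system $(gr(T),\hat T)$, which lies in $\D$ by the lemma preceding this one. The only thing to check is that the \emph{abstract} objects attached to $(gr(T),\hat T)$ by the general definitions — its orbit space, its shift, and its zeroth-coordinate projection — coincide with $gr(T)_{\hat T}$, $\sigma_{\hat T}$, and $\hat\pi_0$ as introduced in the Notations above. Concretely: the orbit space of $(gr(T),\hat T)$ is $\{(z_i)_{i\in\Z}\in gr(T)^{\Z} : z_{i+1}\in\hat T(z_i)\}$, and unwinding $\hat T = \pi_T^{-1}\circ\pi_T'$ shows that, writing $z_i=(a_i,b_i)\in gr(T)$, the condition $z_{i+1}\in\hat T(z_i)$ is equivalent to $a_{i+1}=b_i$ together with $b_{i+1}\in T(a_{i+1})$; setting $x_i:=a_i$ gives $z_i=(x_i,x_{i+1})$ with $x_{i+1}\in T(x_i)$ for all $i\in\Z$, which is precisely the description of $gr(T)_{\hat T}$. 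Under this identification the generic shift becomes $\sigma_{\hat T}$ and the generic zeroth-coordinate projection becomes $\hat\pi_0$, so Lemma~\ref{lem:MF} yields $\m_{\hat T}(gr(T)) = \m_{\sigma_{\hat T}}(gr(T)_{\hat T})\circ\hat\pi_0^{-1}$, which is \eqref{stathr}.

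I do not anticipate a genuine obstacle here: the sole step requiring care is the identification of the abstract orbit space of $(gr(T),\hat T)$ with $gr(T)_{\hat T}$, and this is routine — indeed it is the same bookkeeping that already underlies the definition of $R_T$ and of the conjugacy in Lemma~\ref{top_conj}. (Alternatively, one could derive \eqref{stathr} from \eqref{stafiv} by transporting measures along the conjugacy $R_T$ of Lemma~\ref{top_conj}, but the direct application of Lemma~\ref{lem:MF} is cleaner.)
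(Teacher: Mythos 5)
Your proposal is correct and matches the paper's own argument, which likewise proves both identities by applying Lemma~\ref{lem:MF} to the systems $(X,T)$ and $(gr(T),\hat T)$; you merely spell out the routine identification of the abstract orbit space of $(gr(T),\hat T)$ with $gr(T)_{\hat T}$, which the paper leaves implicit.
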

\begin{proof}
This follows from Lemma~\ref{lem:MF}.
\end{proof}

\begin{notation}
Define $\pi_{01} \colon X_T \rightarrow gr(T)$ by
\begin{equation*}
    \pi_{01} ((x_i)_{i \in \mathbb{Z}}) = (x_0, x_1),
\end{equation*}
so that clearly
\begin{equation}\label{dubstar}
\pi_{01} =\hat{\pi}_0 \circ R_T.
\end{equation}
\end{notation}

\begin{lem}
 If $(X,T)\in\D$ then 
\begin{equation}\label{vjn9394031}
    \m_{\hat{T}} (gr(T)) 
=\m_{\sigma_T} (X_T) \circ \pi_{01}^{-1}.
\end{equation}
\end{lem}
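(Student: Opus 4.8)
The plan is to derive the identity \eqref{vjn9394031} by combining the three facts already assembled: the characterisation \eqref{stathr} of $\m_{\hat T}(gr(T))$ as pushforwards under $\hat\pi_0$ of shift-invariant measures on $gr(T)_{\hat T}$, the conjugacy relation \eqref{0099f2u39u} between shift-invariant measures on the two orbit spaces induced by $R_T$, and the factorisation \eqref{dubstar}, namely $\pi_{01} = \hat\pi_0 \circ R_T$. First I would write, using \eqref{stathr},
\begin{equation*}
\m_{\hat T}(gr(T)) = \m_{\sigma_{\hat T}}(gr(T)_{\hat T}) \circ \hat\pi_0^{-1}
= \left\{ \widehat\nu \circ \hat\pi_0^{-1} : \widehat\nu \in \m_{\sigma_{\hat T}}(gr(T)_{\hat T}) \right\}.
\end{equation*}
Then, by \eqref{0099f2u39u}, every such $\widehat\nu$ is of the form $\nu \circ R_T$ for a (unique, since $R_T$ is a homeomorphism) measure $\nu \in \m_{\sigma_T}(X_T)$, and conversely every $\nu \in \m_{\sigma_T}(X_T)$ arises this way; here $\nu \circ R_T$ denotes the pushforward $\nu \circ R_T^{-1}$ in the notation of \eqref{0099f2u39u}.

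Next I would substitute: a typical element of $\m_{\hat T}(gr(T))$ is $(\nu \circ R_T^{-1}) \circ \hat\pi_0^{-1} = \nu \circ (\hat\pi_0 \circ R_T)^{-1} = \nu \circ \pi_{01}^{-1}$, where the middle equality is the standard functoriality of pushforward under composition and the last equality is \eqref{dubstar}. Running the bijection in both directions gives the set equality
\begin{equation*}
\m_{\hat T}(gr(T)) = \left\{ \nu \circ \pi_{01}^{-1} : \nu \in \m_{\sigma_T}(X_T) \right\} = \m_{\sigma_T}(X_T) \circ \pi_{01}^{-1},
\end{equation*}
which is exactly \eqref{vjn9394031}. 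The only genuinely substantive inputs are Lemma~\ref{top_conj} (that $R_T$ is a topological conjugacy, hence in particular a homeomorphism so that $R_T^{-1}$ is measurable and pushforward along it is a bijection on Borel probability measures intertwining the two shift actions) and Lemma~\ref{lem:MF} applied to both $(X,T)$ and $(gr(T),\hat T)$, which underlies \eqref{stafiv} and \eqref{stathr}; both are already available.

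I do not anticipate a serious obstacle: the argument is a diagram chase at the level of pushforward measures, and every ingredient has been established above. The one point requiring a modicum of care is bookkeeping of the notational conventions — the paper writes $\nu \circ R_T$ for what is really the pushforward $\nu \circ R_T^{-1}$, and similarly $\circ\, \pi_0^{-1}$, $\circ\, \hat\pi_0^{-1}$, $\circ\, \pi_{01}^{-1}$ — so I would simply be explicit that composition of the underlying maps corresponds to composition of pushforwards, i.e.\ $(g\circ f)_* = g_* \circ f_*$, so that $\hat\pi_0 \circ R_T = \pi_{01}$ yields $(\hat\pi_0)_* \circ (R_T)_* = (\pi_{01})_*$ on measures. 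With that observation in place the proof is a two-line substitution.
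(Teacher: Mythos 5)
Your argument is correct and is essentially the paper's own proof: both combine \eqref{stathr}, the factorisation $\pi_{01}=\hat{\pi}_0\circ R_T$ from \eqref{dubstar}, and the conjugacy identity \eqref{0099f2u39u} (resting on Lemmas~\ref{lem:MF} and~\ref{top_conj}) in the same two-line pushforward chase. The only difference is that you spell out the functoriality of pushforwards and the direction conventions explicitly, which the paper leaves implicit; there is no gap.
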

\begin{proof}
By (\ref{stathr}) and  (\ref{dubstar}),
$$
\m_{\hat{T}} (gr(T)) =\m_{\sigma_{\hat{T}}} (gr(T)_{\hat{T}}) \circ \hat{\pi}_0^{-1}
= \m_{\sigma_{\hat{T}}} (gr(T)_{\hat{T}}) \circ R_T \circ \pi_{01}^{-1},
$$
and combining with \eqref{0099f2u39u} yields the required identity.
\end{proof}

\begin{lem}  If $(X,T)\in\D$ then 
\begin{equation}\label{mF=mhatFcircpi1^-1}
    \m_T (X) 
    = \m_{\hat{T}} (gr(T)) \circ \pi_T^{-1}.
\end{equation}
\end{lem}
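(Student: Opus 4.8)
The plan is to deduce \eqref{mF=mhatFcircpi1^-1} by combining the already-established identities \eqref{stafiv}, \eqref{vjn9394031}, and \eqref{dubstar}, thereby reducing everything to a statement about shift-invariant measures on the orbit space $X_T$. The starting observation is that $\pi_T \circ \pi_{01} = \pi_0$ as maps $X_T \to X$: indeed, for $\udx = (x_i)_{i\in\Z} \in X_T$ we have $\pi_{01}(\udx) = (x_0,x_1) \in gr(T)$, and applying $\pi_T$ gives $x_0 = \pi_0(\udx)$.

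From here the argument is a short chain of equalities of sets of measures. First, by \eqref{stafiv},
\begin{equation*}
\m_T(X) = \m_{\sigma_T}(X_T) \circ \pi_0^{-1} = \m_{\sigma_T}(X_T) \circ (\pi_T \circ \pi_{01})^{-1} = \bigl(\m_{\sigma_T}(X_T) \circ \pi_{01}^{-1}\bigr) \circ \pi_T^{-1}.
\end{equation*}
Then \eqref{vjn9394031} identifies the inner set $\m_{\sigma_T}(X_T) \circ \pi_{01}^{-1}$ with $\m_{\hat T}(gr(T))$, so the right-hand side becomes $\m_{\hat T}(gr(T)) \circ \pi_T^{-1}$, which is exactly \eqref{mF=mhatFcircpi1^-1}. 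I should spell out that the operation $\mathcal{S} \mapsto \mathcal{S} \circ \pi^{-1}$ on a collection of measures $\mathcal{S}$, meaning $\{\mu \circ \pi^{-1} : \mu \in \mathcal{S}\}$, satisfies the functoriality $(\mathcal{S} \circ g^{-1}) \circ h^{-1} = \mathcal{S} \circ (h \circ g)^{-1}$ for composable continuous maps $g,h$, which is just the pointwise identity $(\mu \circ g^{-1}) \circ h^{-1} = \mu \circ (h\circ g)^{-1}$ applied to each $\mu$; this is the only ``calculation'' and it is immediate from the definition of pushforward.

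I do not expect any genuine obstacle here: every ingredient is already proved in the excerpt, and the proof is purely a bookkeeping exercise in pushing measures through the commuting diagram of projections $X_T \xrightarrow{\pi_{01}} gr(T) \xrightarrow{\pi_T} X$. The only point requiring a moment's care is making sure the relevant maps are genuinely composable and that the set-theoretic pushforward operation behaves as claimed under composition — but this is routine. If one wished to avoid invoking \eqref{vjn9394031}, an alternative would be a direct argument using Lemma~\ref{invariantmeasure}(d): given $\mu \in \m_T(X)$, take a measure $m$ on $gr(T)$ with both marginals $\mu$, observe that $m$ is $\hat T$-invariant (again via condition (d), now applied on $gr(T)$, using a suitable measure on $gr(\hat T) \subseteq gr(T)^2$ built from an orbit-space lift), and check $m \circ \pi_T^{-1} = \mu$; conversely push any $\hat T$-invariant measure forward by $\pi_T$ and verify invariance. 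But the chain-of-identities route is cleaner and shorter, so that is the one I would write up.
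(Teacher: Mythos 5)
Your proposal is correct and is essentially identical to the paper's proof: it records the commutation $\pi_0=\pi_T\circ\pi_{01}$, applies \eqref{stafiv} to get $\m_T(X)=\m_{\sigma_T}(X_T)\circ\pi_{01}^{-1}\circ\pi_T^{-1}$, and then invokes \eqref{vjn9394031}. The extra remarks on functoriality of pushforward are fine but not needed beyond what the paper treats as immediate.
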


\begin{proof}
Clearly
\begin{equation}\label{projections} 
\pi_0 =\pi_T \circ \pi_{01},
\end{equation}
so (\ref{stafiv}) and (\ref{projections}) give
$$
\m_T (X) =\m_{\sigma_T} (X_T) \circ \pi_0^{-1} = \m_{\sigma_T} (X_T) \circ \pi_{01}^{-1}\circ \pi_T^{-1},
$$
and combining with (\ref{vjn9394031}) gives the required identity.
\end{proof}

\begin{defn}\label{graph_function_defn} (Graph function)

\noindent
If $(X,T)\in\D$, and $f\in C(X)$, the \emph{graph function} 
 $\hat{f} : gr(T) \rightarrow \R$ is defined by 
\begin{equation}\label{fhatdef}
    \hat{f} (x_1, x_2) := f(x_1),
\end{equation}
so that $\hat{f}\in C(gr(T))$.
\end{defn}

Finally, we deduce the following relation between the ergodic optimization problems for $(X,T,f)$ and
$\left(gr(T),\hat{T},\hat{f}\right)$:

\begin{prop}\label{equal_max_ergodic_averages}
If $(X,T)\in\D$, and $f\in C(X)$, then
 $$\beta (X,T,f) =\beta \left(gr(T),\hat{T},\hat{f}\right),$$ 
$$\m_{\max} (X,T,f) =\m_{\max} \left(gr(T),\hat{T},\hat{f}\right) \circ \pi_T^{-1},$$
and
$$\m_{\max} \left(gr(T),\hat{T},\hat{f}\right) = \{ \nu \in \m_{\hat{T}} (gr(T)) :\nu \circ \pi_T^{-1} \in \m_{\max} (X,T,f) \}.$$
\end{prop}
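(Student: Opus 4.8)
The plan is to exploit the identity $\m_T(X) = \m_{\hat T}(gr(T))\circ \pi_T^{-1}$ established in \eqref{mF=mhatFcircpi1^-1}, together with the observation that the functional values are preserved under this correspondence. First I would record the key compatibility: for any $\nu \in \m_{\hat T}(gr(T))$, writing $\mu = \nu\circ\pi_T^{-1} \in \m_T(X)$, we have $\int_{gr(T)} \hat f\, d\nu = \int_{gr(T)} f\circ\pi_T\, d\nu = \int_X f\, d(\nu\circ\pi_T^{-1}) = \int_X f\, d\mu$, where the first equality is just the definition \eqref{fhatdef} of $\hat f$, and the second is the change-of-variables formula for pushforward measures. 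Since $f\in C(X)$ and $\pi_T$ is continuous, $\hat f \in C(gr(T))$, so both maximum ergodic averages are given by the $\alpha$-formulation of Definition~\ref{alpha_max_erg_average}.

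Next I would prove the equality of the maximum ergodic averages. Taking the supremum over $\nu\in \m_{\hat T}(gr(T))$ on the left side of the compatibility identity and using that $\mu = \nu\circ\pi_T^{-1}$ ranges over all of $\m_T(X)$ as $\nu$ ranges over $\m_{\hat T}(gr(T))$ (by \eqref{mF=mhatFcircpi1^-1}), we get
$$\beta\left(gr(T),\hat T,\hat f\right) = \sup_{\nu\in\m_{\hat T}(gr(T))} \int \hat f\, d\nu = \sup_{\mu\in\m_T(X)} \int f\, d\mu = \beta(X,T,f).$$
Call this common value $\beta$.

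For the two statements about maximizing measures, I would argue by unwinding the definitions. A measure $\nu\in\m_{\hat T}(gr(T))$ lies in $\m_{\max}(gr(T),\hat T,\hat f)$ if and only if $\int\hat f\, d\nu = \beta$; by the compatibility identity this holds if and only if $\int f\, d(\nu\circ\pi_T^{-1}) = \beta$, i.e.~if and only if $\nu\circ\pi_T^{-1}\in\m_{\max}(X,T,f)$. This is precisely the third displayed equation. For the second displayed equation, the inclusion ``$\subseteq$'' is immediate from the third equation: if $\nu\in\m_{\max}(gr(T),\hat T,\hat f)$ then $\nu\circ\pi_T^{-1}\in\m_{\max}(X,T,f)$, so $\m_{\max}(gr(T),\hat T,\hat f)\circ\pi_T^{-1}\subseteq\m_{\max}(X,T,f)$. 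Conversely, given $\mu\in\m_{\max}(X,T,f)\subseteq\m_T(X)$, by \eqref{mF=mhatFcircpi1^-1} there exists $\nu\in\m_{\hat T}(gr(T))$ with $\nu\circ\pi_T^{-1} = \mu$; then $\int\hat f\, d\nu = \int f\, d\mu = \beta$, so $\nu\in\m_{\max}(gr(T),\hat T,\hat f)$ and $\mu = \nu\circ\pi_T^{-1}$ lies in the image. This proves ``$\supseteq$'', hence equality.

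The only genuine input beyond bookkeeping is the surjectivity half of \eqref{mF=mhatFcircpi1^-1}, namely that every $T$-invariant measure on $X$ is the $\pi_T$-pushforward of some $\hat T$-invariant measure on $gr(T)$; but this is exactly the content of the already-proved lemma, so there is no real obstacle here. The proof is essentially a diagram chase through the pushforward relations, and the main point to state carefully is that $\hat f = f\circ\pi_T$ makes the functionals $\nu\mapsto\int\hat f\, d\nu$ and $\mu\mapsto\int f\, d\mu$ correspond under the bijection $\nu\leftrightarrow\nu\circ\pi_T^{-1}$ between the relevant measure sets.
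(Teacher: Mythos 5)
Your proposal is correct and follows essentially the same route as the paper: it invokes the identity $\m_T(X)=\m_{\hat T}(gr(T))\circ\pi_T^{-1}$ from \eqref{mF=mhatFcircpi1^-1} together with the integral identity $\int_{gr(T)}\hat f\,\mathrm{d}\nu=\int_X f\,\mathrm{d}(\nu\circ\pi_T^{-1})$ coming from $\hat f=f\circ\pi_T$, and then reads off all three statements. The paper states exactly this and leaves the final bookkeeping implicit, which you have simply carried out in more detail.
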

\begin{proof}
Now $   \m_T (X) 
    = \m_{\hat{T}} (gr(T)) \circ \pi_T^{-1}$ by \eqref{mF=mhatFcircpi1^-1},
and since $\hat{f}=f\circ\pi$ by (\ref{fhatdef}), we have
\begin{equation}\label{integral_identity}
\int_{gr(T)} \! \hat{f} \, \mathrm{d} \nu =\int_X \! f \, \mathrm{d} (\nu \circ \pi_T^{-1}) 
\quad\text{for all }\nu \in \m_{\hat{T}}(gr(T)),
\end{equation}
so the result follows.
\end{proof}

\section{Open expanding systems}\label{sec_openexpandingsetvalued}

In this section we develop a theory of multi-valued dynamical systems that are both expanding (i.e.~distances
between sufficiently nearby points are expanded by some uniform factor) and open (i.e.~images of open sets are open),
motivated by the prospect of establishing, in Section \ref{mane_section}, a Ma\~n\'e cohomology lemma
for such systems.
In the single-valued setting, expanding maps have been studied since the 1960s, notably by Shub
\cite{shub1, shub2} in the differentiable setting
(see also e.g.~\cite{dekimpelee, epsteinshub, farrelljones, farrellgogolev,gogolevrodriguezhertz,gromov,hirsch,ks, leelee},
as well as \cite[Ch.~2]{baladi}, \cite[\S 1.7]{katokhasselblatt}, \cite[\S 4.11]{szlenk}).
In the context of general compact metric spaces, as considered here, 
the distance-expanding hypothesis by itself will not be sufficient to make progress, and an additional openness assumption
will be required\footnote{Consider, for example, the familiar setting of symbolic dynamics: in the case of subshifts $X$ equipped with a standard metric,
the shift map on $X$ is expanding, but is an \emph{open} mapping only when $X$ is of finite type (see \cite{parry}).}.
Open expanding maps
were introduced by Ruelle \cite[\S 7.26]{ruelle}, 
in the context of thermodynamic formalism,
and the first systematic and detailed exposition at this level of generality was by
Przytycki \& Urba\'nski \cite[Ch.~3]{PU} (see also 
Urba\'nski, Roy \& Munday \cite[Ch.~4]{URM} and Viana \& Oliveira \cite[Ch.~11]{vianaoliveira}).

\begin{defn} (Open multi-valued systems)

\noindent We say $(X,T)\in\D$ is \emph{open} if $T(A)$ is open in $X$ for every open subset $A \subseteq X$.
\end{defn}

A basic feature of open mappings is the following:

\begin{lem}\label{lowersemicontinuity of T^-1}
    If $(X,T)\in\D$ 
is open, then the cardinality map
$\# T^{-1} (\,\cdot\,) : X \rightarrow \mathbb{Z}_{\geq 0} \cup \{ \infty \}$ is lower semi-continuous.
\end{lem}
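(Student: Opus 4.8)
The plan is to verify lower semi-continuity directly from the definition, exploiting the Hausdorff property of $X$ to separate preimages, and the openness hypothesis to push neighbourhoods forward under $T$. Recall that for a function with values in $\mathbb{Z}_{\geq 0}\cup\{\infty\}$, lower semi-continuity at a point $x'\in X$ amounts to the assertion that for every $n\in\N$ with $n\le \#T^{-1}(x')$ there is an open set $V\ni x'$ such that $\#T^{-1}(v)\ge n$ for all $v\in V$.

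So I would fix $x'\in X$ and $n\in\N$ with $n\le\#T^{-1}(x')$, and choose distinct points $x_1,\dots,x_n\in T^{-1}(x')$, i.e.\ with $x'\in T(x_i)$ for $1\le i\le n$. Since $X$ is metrizable, hence Hausdorff, one can select pairwise disjoint open sets $U_1,\dots,U_n$ with $x_i\in U_i$ for each $i$. The openness of $(X,T)$ then gives that each $T(U_i)$ is open in $X$, and since $x'\in T(x_i)\subseteq T(U_i)$, the finite intersection $V:=\bigcap_{i=1}^n T(U_i)$ is an open neighbourhood of $x'$.

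Finally, for each $v\in V$ and each index $i$ there is some $u_i\in U_i$ with $v\in T(u_i)$, i.e.\ $u_i\in T^{-1}(v)$; the disjointness of the $U_i$ forces $u_1,\dots,u_n$ to be pairwise distinct, so $\#T^{-1}(v)\ge n$. This is exactly the inequality required on $V$, and the case $\#T^{-1}(x')=\infty$ is subsumed since the argument applies to every finite $n$. I do not expect a genuine obstacle here: the only points needing care are the correct formulation of lower semi-continuity for $\{0,1,2,\dots,\infty\}$-valued functions, and the observation — immediate from $x_i\in U_i$ — that $x'$ belongs to every $T(U_i)$, so that the finite intersection $V$ remains a neighbourhood of $x'$ to which the openness hypothesis can be applied.
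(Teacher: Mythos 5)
Your proposal is correct and follows essentially the same argument as the paper: separate finitely many preimage points by pairwise disjoint open sets, use openness of $T$ to get that each image is an open neighbourhood of the point in question, and intersect to obtain a neighbourhood on which the preimage cardinality is at least as large. The only (immaterial) difference is that you use the neighbourhood formulation of lower semi-continuity, while the paper phrases it via convergent sequences and a $\liminf$ inequality; your explicit handling of the case $\#T^{-1}(x')=\infty$ by running the argument for every finite $n$ is a small point the paper leaves implicit.
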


\begin{proof}
    Fixing an arbitrary sequence $y_n \in X$, that converges to $y \in X$, we must show that
    \begin{equation}\label{3948efhdyjbcjsad0}
        \# T^{-1} (y) \leqslant \liminf_{n \to +\infty} \# T^{-1} (y_n).
    \end{equation}
    Letting $l =\# T^{-1} (y)$, we can write $T^{-1} (y) = \{x_1,\, x_2,\, \dots ,\, x_l \}$, and choose $l$ pairwise disjoint open subsets $U_1 ,\, U_2 ,\, \dots, U_l$ of $X$ with $x_j \in U_j$ for each $1\le j\le l$. 
Since $(X,T)$ is open, $T(U_j)$ is an open neighbourhood of $y$  for each $1\le j\le l$, so $V :=\bigcap_{j=1}^l T(U_j)$ 
is also an open neighbourhood of $y$ in $X$. 
If $y' \in V$ then $T^{-1} (y') \cap U_j \neq \emptyset$  for each $1\le j\le l$ since 
$y' \in V \subseteq T(U_j)$,
therefore $\# T^{-1} (y') \geqslant l$
since $U_1 ,\, U_2 ,\, \dots U_l$ are pairwise disjoint. 
So there exists $N \in \mathbb{N}$ such that $\# T^{-1} (y_n) \geqslant l =\# T^{-1} (y)$ for all $n \geqslant N$,
and \eqref{3948efhdyjbcjsad0} follows.
\end{proof}

Henceforth we shall assume that $X$ is a metric (rather than metrizable) space,
and introduce the following notation.

\begin{notation}
Let $\D_{{\rm Met}}$ denote the collection of pairs $(X,T)$ where $X=(X,d)$ is a compact metric space,
and $T:X\to 2^X$ is upper semi-continuous.
\end{notation}

The following Definition \ref{def expanding} generalises the notion of
an  expanding (single-valued) map (cf.~\cite[Defn.~4.1.1]{URM}).
Our intuition for this definition is that a multi-valued expanding system should have the property that \emph{each forward branch is expanding}. The key consequence
(see Proposition~\ref{inverse branch of open expanding correspondences})
is that under the additional hypothesis of openness, all inverse branches turn out to be well-defined and contracting
(a fact that, in the single-valued setting, is known to have important applications for both
thermodynamic formalism \cite{PU,ruelle,URM}, facilitating transfer operator methods, and ergodic optimization \cite{Bou00,Co16,CLT01, new survey}).

\begin{defn}\label{def expanding} (Expanding multi-valued systems)

\noindent
The multi-valued dynamical system
    $(X,T)\in \D_{{\rm Met}}$ will be called \emph{expanding} (with respect to the metric $d$ on $X$) if there exist $\lambda >1$ and $\eta >0$ such that for all $x ,\, y \in X$, if $d(x,y) \leqslant \eta$, then
		\begin{equation*}
			\inf \{d(x' ,y') : x' \in T(x) ,\, y' \in T(y)\} \geqslant \lambda d(x,y).
		\end{equation*}
\end{defn}

\begin{rem}\label{separatedness of T^-1}
    Suppose $(X,T)\in \D_{{\rm Met}}$ is expanding, with parameters
$\lambda$, $\eta$ as in Definition~\ref{def expanding}. For each $x \in X$, the expanding property of $T$ means $d(y,z) >\eta$ for each pair of distinct points $y,\, z \in T^{-1} (x)$. Since $X$ is compact, the cardinality of any subset for which each pair of distinct members are distant by at least $\eta$ is bounded by a constant $M \in \mathbb{N}$, so $\# T^{-1} (x) \leqslant M$ for all $x \in X$, and in particular $\# T^{-1}(x)$ is always finite, so $\# T^{-1}(\,\cdot\,)$ is a 
mapping $X\rightarrow \mathbb{Z}_{\ge0}$.
\end{rem}

The following lemma,
a complement to Lemma \ref{lowersemicontinuity of T^-1},
represents a necessary condition for developing a satisfactory theory of inverse branches
(cf.~Proposition \ref{inverse branch of open expanding correspondences})
 for open expanding
multi-valued systems.

\begin{lem}\label{uppersemicontinuity of T^-1}
    If $(X,T)\in\D_{{\rm Met}}$ is expanding, then
the map $\# T^{-1} (\,\cdot\,) : X \rightarrow \mathbb{Z}_{\geq 0}$ is upper semi-continuous.
\end{lem}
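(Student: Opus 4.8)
The plan is to prove upper semi-continuity of $\# T^{-1}(\,\cdot\,)$ by contradiction, exploiting the expanding property to show that inverse images cannot split apart in the limit. Concretely, suppose $y_n \to y$ in $X$ but $\liminf_{n\to\infty} \# T^{-1}(y_n) > \# T^{-1}(y) =: l$. By passing to a subsequence I may assume $\# T^{-1}(y_n) \geqslant l+1$ for all $n$, so for each $n$ I can pick $l+1$ pairwise distinct points $x_1^{(n)}, \dots, x_{l+1}^{(n)} \in T^{-1}(y_n)$. By Remark~\ref{separatedness of T^-1}, these points are pairwise distant by more than $\eta$ (so in particular $l+1 \leqslant M$), and by compactness of $X$ I can pass to a further subsequence so that $x_j^{(n)} \to x_j$ as $n \to \infty$, for each $1 \leqslant j \leqslant l+1$. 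The limit points $x_1, \dots, x_{l+1}$ remain pairwise distant by at least $\eta$, hence are pairwise distinct.

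The key step is then to show each $x_j \in T^{-1}(y)$, i.e.~$(x_j, y) \in gr(T)$. This is immediate from the fact that $gr(T)$ is closed in $X^2$ (a consequence of upper semi-continuity of $T$, noted in the Remark following the definition of upper semi-continuity): since $(x_j^{(n)}, y_n) \in gr(T)$ for all $n$ and $(x_j^{(n)}, y_n) \to (x_j, y)$, we get $(x_j, y) \in gr(T)$. Thus $T^{-1}(y)$ contains the $l+1$ distinct points $x_1, \dots, x_{l+1}$, so $\# T^{-1}(y) \geqslant l+1 > l$, contradicting $\# T^{-1}(y) = l$. Therefore $\# T^{-1}(y) \geqslant \liminf_{n\to\infty} \# T^{-1}(y_n)$, which is exactly upper semi-continuity.

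The main obstacle — really the only subtlety — is ensuring that the $l+1$ chosen preimages do not collide in the limit: a priori several sequences $x_j^{(n)}$ could converge to the same point, which would destroy the counting. This is precisely where the expanding hypothesis is essential (via Remark~\ref{separatedness of T^-1}): the uniform lower bound $\eta$ on the pairwise distances between distinct elements of any single fiber $T^{-1}(y_n)$ is preserved under the limit, guaranteeing the limit points are genuinely distinct and also bounding $l+1$ by the constant $M$ so that the diagonal/subsequence extraction is legitimate. Everything else is routine compactness and the closedness of the graph. I would write this as a short direct argument rather than invoking openness, since openness plays no role here — indeed this lemma, combined with Lemma~\ref{lowersemicontinuity of T^-1}, is what will make $\# T^{-1}(\,\cdot\,)$ locally constant for \emph{open} expanding systems in the sequel.
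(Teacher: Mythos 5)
Your proof is correct and rests on exactly the same ingredients as the paper's: the $\eta$-separation of each fiber $T^{-1}(y_n)$ from Remark~\ref{separatedness of T^-1} (which also gives finiteness of $\# T^{-1}(y)$), compactness of $X$, and closedness of $gr(T)$. The only difference is organizational: the paper argues directly, showing that for large $n$ the fiber $T^{-1}(y_n)$ lies in $\bigcup_{i=1}^{l} U_X(w_i,\eta/2)$ and that each such ball can contain at most one fiber point, whereas you run the equivalent argument by contradiction, extracting $l+1$ separated preimages and passing to limits via the closed graph — essentially the same approach.
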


    \begin{proof}
If $(X,T)$ is expanding with respect to $d$, then
    there exist $\lambda >1$, $\eta >0$
as in Definition~\ref{def expanding}. 
Fixing an arbitrary sequence $y_n\in X$ which converges to $y \in X$, we aim to show that
        \begin{equation}\label{3948efhdyjbcjsad}
            \# T^{-1} (y) \geqslant \# \limsup_{n \to +\infty} T^{-1} (y_n).
        \end{equation}
Let $l:=\# T^{-1} (y)$
and $T^{-1} (y) =\{ w_1 ,\, \dots ,\, w_l \}$,  and write $A= \{ (x,y_n) : n \in \mathbb{N}$ and $x \in T^{-1} (y_n)\}$,
noting that $A$ is a subset of the compact set $gr (T)$. Recall from Remark~\ref{separatedness of T^-1} that $T^{-1} (y_n)$ is a finite set for every $n \in \mathbb{N}$, so an accumulation point of $A$ must be of the form $(x,y) \in gr (T)$, i.e.~of the form $(w_i, y)$ for some $1\le i\le l$. 
Consequently, there exists $N \in \mathbb{N}$ such that $T^{-1} (y_n) \subseteq \bigcup_{i=1}^l U_X (w_i, \eta /2)$ 
for all $n \geqslant N$, where $U_X (w_i, \eta /2) := \{ w \in X : d(w,w_i) <\eta /2 \}$. Recall from Remark~\ref{separatedness of T^-1} that the distance between a pair of distinct points in $T^{-1} (y_n)$ is greater than $\eta$, so if $n \geqslant N$ and $1\le i\le l$, then the set $T^{-1} (y_n)$ 
contains at most one member of
$U_X (w_i, \eta /2)$.
        Hence $\# T^{-1} (y_n) \leqslant l =\# T^{-1} (y)$ for all $n \geqslant N$, and  (\ref{3948efhdyjbcjsad}) 
follows. 
    \end{proof}

Lemmas~\ref{lowersemicontinuity of T^-1} and~\ref{uppersemicontinuity of T^-1} immediately imply the following:

\begin{cor}\label{continuity of T^-1}
    If $(X,T)\in\D_{{\rm Met}}$ is open and expanding, then $\# T^{-1} (\,\cdot\,) : X \rightarrow \mathbb{Z}_{\geq 0}$ is continuous, 
i.e.~for each $x \in X$ there exists an open neighbourhood $U$ of $x$ with $\# T^{-1} (x') =\# T^{-1} (x) \in \mathbb{Z}_{\geq 0}$ for all $x' \in U$.
\end{cor}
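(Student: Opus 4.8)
The plan is simply to combine the two preceding lemmas, so the content of the corollary is entirely contained in Lemmas~\ref{lowersemicontinuity of T^-1} and~\ref{uppersemicontinuity of T^-1}. First I would record that, by Remark~\ref{separatedness of T^-1}, the expanding hypothesis forces $\# T^{-1}(x) \leq M$ for some fixed $M \in \mathbb{N}$ independent of $x$, so that $\# T^{-1}(\,\cdot\,)$ is a genuine $\mathbb{Z}_{\geq 0}$-valued function on $X$ (the value $\infty$ never occurs); this is precisely what makes Lemma~\ref{uppersemicontinuity of T^-1} applicable. Then Lemma~\ref{lowersemicontinuity of T^-1}, using that $(X,T)$ is open, gives lower semi-continuity of $\# T^{-1}(\,\cdot\,)$, while Lemma~\ref{uppersemicontinuity of T^-1}, using that $(X,T)$ is expanding, gives upper semi-continuity.

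The only point left to spell out is that a $\mathbb{Z}_{\geq 0}$-valued function that is both lower and upper semi-continuous is locally constant. Fixing $x \in X$ and writing $l = \# T^{-1}(x)$, upper semi-continuity at $x$ applied with threshold $l+1$ yields an open neighbourhood $V$ of $x$ on which $\# T^{-1}(x') < l+1$, i.e.\ $\# T^{-1}(x') \leq l$; if $l=0$ this already gives $\# T^{-1}(x') = 0$ on $V$, and if $l \geq 1$ then lower semi-continuity at $x$ applied with threshold $l-1$ yields an open neighbourhood $W$ of $x$ on which $\# T^{-1}(x') > l-1$, i.e.\ $\# T^{-1}(x') \geq l$. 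Taking $U = V \cap W$ gives $\# T^{-1}(x') = l = \# T^{-1}(x)$ for all $x' \in U$, which is the asserted statement.

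There is no real obstacle here: all the substance lies in the two lemmas, and the final observation — that two-sided semi-continuity of an integer-valued map is the same as local constancy — is routine. The only thing one must be slightly careful about is to invoke Remark~\ref{separatedness of T^-1} first, so that one is entitled to regard $\# T^{-1}(\,\cdot\,)$ as taking values in $\mathbb{Z}_{\geq 0}$ rather than $\mathbb{Z}_{\geq 0} \cup \{\infty\}$.
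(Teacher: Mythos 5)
Your proposal is correct and matches the paper exactly: the paper derives the corollary as an immediate consequence of Lemma~\ref{lowersemicontinuity of T^-1} (openness gives lower semi-continuity) and Lemma~\ref{uppersemicontinuity of T^-1} (expansion gives upper semi-continuity and, via Remark~\ref{separatedness of T^-1}, finiteness). Your extra paragraph spelling out that a two-sidedly semi-continuous $\mathbb{Z}_{\geq 0}$-valued function is locally constant is just the routine step the paper leaves implicit.
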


The following key result describes the inverse branches of an open expanding 
system.

\begin{prop}\label{inverse branch of open expanding correspondences}
    Let $(X,T)\in\D_{{\rm Met}}$ be open, and expanding with respect to the metric $d$,
where $\lambda >1$ and $\eta >0$ satisfy that for all $x ,\, y \in X$, if $d(x,y) \leqslant \eta$ then
    \begin{equation*}
		\inf \{d(x' ,y') : x' \in T(x) ,\, y' \in T(y)\} \geqslant \lambda d(x,y).
    \end{equation*}
    Then there exists $M \in \mathbb{N}$ such that for every $x \in X$, there is a neighbourhood $U_x$ 
    of $x$ in $X$ 
    such that if $T^{-1} (x) =\emptyset$, then $T^{-1} (y) =\emptyset$ for all $y \in U_x$; 
and if $T^{-1} (x) \neq \emptyset$, then there exist continuous maps $S_i ,\, 1 \leqslant i \leqslant l_x \leqslant M$, satisfying
        \begin{enumerate}
            \smallskip
            \item The images $S_1 (U_x) ,\, S_2 (U_x) ,\, \dots ,\, S_{l_x} (U_x)$ are pairwise disjoint.
            \smallskip
            \item If $y \in U_x$ then $T^{-1} (y) =\{ S_1 (y) ,\, S_2 (y) ,\, \dots ,\, S_{l_x} (y) \}$.
            \smallskip
            \item If $1\le i\le l_x$ and $y,\, z \in U_x$, then
            \begin{equation}\label{contracting of inverse branch}
                d(S_i (y), S_i (z)) \leqslant \frac{1}{\lambda} d(y,z).
            \end{equation}
        \end{enumerate}
\end{prop}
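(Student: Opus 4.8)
The plan is to start from the continuity of the cardinality map established in Corollary \ref{continuity of T^-1}, which hands us, for each $x\in X$, an open neighbourhood $W_x$ of $x$ on which $\#T^{-1}(\cdot)$ is constant, say equal to $l_x$; by Remark \ref{separatedness of T^-1} we may take the uniform bound $M\in\mathbb N$ so that $l_x\le M$ for all $x$, and moreover points of $T^{-1}(y)$ are pairwise $\eta$-separated for every $y$. The case $T^{-1}(x)=\emptyset$ is then immediate: $l_x=0$ on $W_x$, so we set $U_x=W_x$. Assume henceforth $T^{-1}(x)=\{w_1,\dots,w_{l_x}\}\neq\emptyset$ with the $w_i$ pairwise $\eta$-separated. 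First I would choose a small radius $r\in(0,\eta/2)$ so that the closed balls $\overline{B}(w_i,r)$ are pairwise disjoint, and then argue, exactly as in the proof of Lemma \ref{uppersemicontinuity of T^-1} (using that $A=\{(w,y'):y'\text{ near }x,\ w\in T^{-1}(y')\}$ has all its accumulation points in $\bigcup_i\{(w_i,x)\}$, since $gr(T)$ is closed and the fibres are finite), that there is an open neighbourhood $U_x\subseteq W_x$ of $x$ with $T^{-1}(y)\subseteq\bigcup_{i=1}^{l_x}B(w_i,r)$ for every $y\in U_x$. Since $\#T^{-1}(y)=l_x$ on $U_x$ and each ball $B(w_i,r)$ has diameter $<\eta$ hence contains at most one member of the $\eta$-separated set $T^{-1}(y)$, it follows that $T^{-1}(y)$ has exactly one point in each $B(w_i,r)$; define $S_i(y)$ to be that point. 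This gives (2) by construction, and the disjointness in (1) follows because $S_i(U_x)\subseteq B(w_i,r)$ and the balls $B(w_i,r)$ are pairwise disjoint.

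The contraction estimate (3) is where the expanding hypothesis is used. Fix $1\le i\le l_x$ and $y,z\in U_x$, and set $x'=S_i(y)$, $x''=S_i(z)$, so $x',x''\in B(w_i,r)$ and hence $d(x',x'')<2r<\eta$. Applying Definition \ref{def expanding} to the pair $x',x''$ gives
\begin{equation*}
\inf\{d(u',u''):u'\in T(x'),\ u''\in T(x'')\}\ge \lambda\, d(x',x'').
\end{equation*}
Since $y\in T(S_i(y))=T(x')$ and $z\in T(S_i(z))=T(x'')$ — this is precisely what $x'\in T^{-1}(y)$ and $x''\in T^{-1}(z)$ mean — the left-hand side is at most $d(y,z)$, so $d(S_i(y),S_i(z))=d(x',x'')\le\frac1\lambda d(y,z)$, which is \eqref{contracting of inverse branch}.

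Finally, continuity of each $S_i$ is immediate from (2) and (3): the estimate in (3) shows $S_i$ is Lipschitz on $U_x$ with constant $1/\lambda$, hence in particular continuous. I would close by noting that the constant $M$ can be chosen uniformly (from Remark \ref{separatedness of T^-1}) while the neighbourhood $U_x$, the number $l_x$, and the branches $S_i$ depend on $x$, as the statement requires. The only mildly delicate step is the localisation argument producing $U_x$ with $T^{-1}(y)\subseteq\bigcup_i B(w_i,r)$: one must be careful that this is a genuine consequence of the closedness of $gr(T)$ together with finiteness of fibres, and not of any uniformity one does not have — but this is exactly the content already extracted in the proof of Lemma \ref{uppersemicontinuity of T^-1}, so it can be invoked essentially verbatim. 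I do not expect any serious obstacle beyond bookkeeping.
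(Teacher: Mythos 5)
Your proposal is correct, and it follows the same overall skeleton as the paper's proof (separation of preimages from Remark~\ref{separatedness of T^-1}, localisation of $T^{-1}(y)$ near the points of $T^{-1}(x)$, the expanding hypothesis for the contraction), but it differs in two genuine local respects. First, where the paper uses openness of $T$ directly — setting $V_i:=T\bigl(\overline{U_X(w_i,\eta/3)}\bigr)$, so that each $V_i$ is a neighbourhood of $x$ on which the branch $S_i$ is defined, with uniqueness coming from the $\eta$-separation — you route all use of openness through Corollary~\ref{continuity of T^-1}: constancy of $\#T^{-1}(\cdot)$ on $W_x$, together with the localisation $T^{-1}(y)\subseteq\bigcup_i B(w_i,r)$ (the argument inside the proof of Lemma~\ref{uppersemicontinuity of T^-1}, valid for any $r>0$ by closedness of $gr(T)$ and compactness of $X$) and the pigeonhole count $l_x$ points in $l_x$ balls each holding at most one, forces exactly one preimage per ball and so defines $S_i$ on $U_x$. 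Second, the paper proves continuity of $S_i$ by a closed-graph/compactness argument on $V_i$, whereas you obtain it for free from the contraction estimate (3), which is legitimate since (3) uses only the definition of $S_i$, not its continuity; this is arguably the cleaner route. The trade-off is that the paper's construction exhibits each branch on the possibly larger set $V_i$ and makes the role of openness transparent, while your version is shorter and concentrates all the topological input in the already-proved Corollary~\ref{continuity of T^-1}; both yield exactly the statement of Proposition~\ref{inverse branch of open expanding correspondences}.
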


\begin{proof}
    Recall from Remark~\ref{separatedness of T^-1} that there exists $M \in \N$ with $\# T^{-1} (x) \leqslant M$ for all $x \in X$.
    Fix an arbitrary $x \in X$. 
    If $T^{-1} (x) =\emptyset$, then by Lemma~\ref{uppersemicontinuity of T^-1}, we can choose a neighbourhood $U_x$ of $x$ satisfying $T^{-1} (y) =\emptyset$ for all $y \in U_x$.

    Now suppose that $T^{-1} (x) =\{ w_1 ,\, \dots ,\, w_{l_x} \} \neq \emptyset$, where $l_x =\# T^{-1} (x) \leqslant M$.
    For each $1\le i \le l_x$ we set $V_i := T\left(\overline{U_X (w_i, \eta /3)}\right)$, where 
$U_X (w_i, \eta /3) := \{ w \in X : d(w, w_i) <\eta /3 \}$.
    Then for each $y \in V_i$, there exists $z\in \overline{U_X (w_i, \eta /3)}$ with $y \in T(z)$. 

If $z ,\, w \in \overline{U_X (w_i, \eta /3)}$ satisfy $T(z) \cap T(w) \neq \emptyset$, then by the expanding property, 
either $z=w$ or $d(z,w) >\eta$; but $z ,\, w \in \overline{U_X (w_i, \eta /3)}$, so $d(z,w) \leqslant 2\eta /3$, 
and therefore $z=w$. Consequently, for each $y \in V_i$, there is exactly one $z \in \overline{U_X (w_i, \eta /3)}$ with $y \in T(z)$. This allows us to define $S_i \colon V_i \rightarrow \overline{U_X (w_i, \eta /3)}$ by $y \in T(S_i (y))$ for all $y \in V_i$. Then the reversal of its graph $\{ (S_i (y), y) : y \in V_i \}$ is $gr (T) \cap \left(\overline{U_X (w_i, \eta /3)} \times X\right)$, which is a closed subset of $\overline{U_X (w_i, \eta /3)} \times V_i$. This implies that $S_i$ is continuous, because $\overline{U_X (w_i, \eta /3)}$ is compact.

    Since $T$ is open, $V_i =T \left(\overline{U_X (w_i, \eta /3)}\right)$ is a neighbourhood of $x$ for each 
$1\le i\le l_x$, so $\bigcap_{i=1}^{l_x} V_i$ is a neighbourhood of $x$. Then by Corollary~\ref{continuity of T^-1}, we can choose an open neighbourhood $U_x$ of $x$ such that $U_x \subseteq \bigcap_{i=1}^{l_x} V_i$ and $\# T^{-1} (y) =l_x$ for all $y \in U_x$. 

Now let us verify (1), (2), and (3) for $U_x$ and the continuous maps $S_i$, $1\le i\le l_x$.

    Recall from Remark~\ref{separatedness of T^-1} that for each pair of distinct points $w_i ,\, w_j \in T^{-1} (x)$, we have $d(w_i, w_j) >\eta$, so the family $\left\{ \overline{U_X (w_i, \eta /3)} \right\}_{i=1}^{l_x}$ 
is pairwise disjoint. Now $S_i (U_x) \subseteq \overline{U_X (w_i, \eta /3)}$ for all 
 $1\le i\le l_x$, so $S_1 (U_x) ,\, S_2 (U_x) ,\, \dots ,\, S_{l_x} (U_x)$ are pairwise disjoint, and thus (1) holds.

    By the definitions of $S_i$, we have $\{ S_1 (y) ,\, S_2 (y) ,\, \dots ,\, S_{l_x} (y) \} \subseteq T^{-1} (y)$ for all $y \in U_x$. Recall that $\# T^{-1} (y) =l_x$ for all $y \in U_x$, 
so $T^{-1} (y)$ is precisely $\{ S_1 (y) ,\, S_2 (y) ,\, \dots ,\, S_{l_x} (y) \}$ for each $y \in U_x$,  and (2) follows.

    To prove (3), we fix arbitrary $y,\, z \in U_x$ and $1\le i\le l_x$. Since $S_i (y) ,\, S_i (z) \in \overline{U_X (w_i, \eta /3)}$, the distance between these points is less than $\eta$. Recall that $y \in T(S_i (y))$ and $z \in T(S_i (z))$,
so the expanding property of $T$ gives $d(y,z) \geqslant \lambda d(S_i (y), S_i (z))$, so (\ref{contracting of inverse branch}) holds.
\end{proof}

\begin{cor}\label{uniform locally inverse contracting branch}
    Under the same hypotheses as Proposition~\ref{inverse branch of open expanding correspondences}, there exists $\epsilon >0$ such that:

    If $y,\, z \in X$ with $d(y,z) <\epsilon$, then $\# T^{-1} (y) =\# T^{-1} (z)=l$, and if 
$l \neq 0$ then the sets can be written as $T^{-1} (y) =\{ x_1 ,\, \dots ,\, x_l \}$, $T^{-1} (z) =\{ w_1 ,\, \dots ,\, w_l \}$, with
    \begin{equation*}
        d(x_i, w_i) \leqslant \frac{1}{\lambda} d(y,z)\quad     \text{for each } 1\le i \le l.
    \end{equation*}
\end{cor}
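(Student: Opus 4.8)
The plan is to globalise Proposition~\ref{inverse branch of open expanding correspondences} by a routine compactness argument, the only substantive input being the existence of a Lebesgue number for a suitable open cover. First, for every $x \in X$ apply Proposition~\ref{inverse branch of open expanding correspondences} to obtain a neighbourhood $U_x$ of $x$ with the stated properties; replacing $U_x$ by its interior we may assume each $U_x$ is open, and we keep the uniform bound $M \in \N$ with $\# T^{-1}(x') \leq M$ for all $x' \in X$ (Remark~\ref{separatedness of T^-1}). The family $\{U_x\}_{x \in X}$ is then an open cover of the compact metric space $X$, so it has a Lebesgue number $\epsilon > 0$: concretely, each $U_x$ contains some ball $U_X(x, r_x)$, finitely many balls $U_X(x_k, r_{x_k}/2)$ cover $X$, and one takes $\epsilon := \min_k r_{x_k}/2$, so that any two points at distance less than $\epsilon$ lie together in a single $U_{x_k}$.

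Next, fix $y, z \in X$ with $d(y,z) < \epsilon$, and choose $x \in X$ with $\{y,z\} \subseteq U_x$. If $T^{-1}(x) = \emptyset$ then, by the first clause of Proposition~\ref{inverse branch of open expanding correspondences}, $T^{-1}(y) = T^{-1}(z) = \emptyset$, and the assertion holds with $l = 0$. Otherwise Proposition~\ref{inverse branch of open expanding correspondences} supplies continuous maps $S_1, \dots, S_{l_x}$ on $U_x$, with $l_x \leq M$, such that $T^{-1}(w) = \{S_1(w), \dots, S_{l_x}(w)\}$ for every $w \in U_x$ and such that $S_1(U_x), \dots, S_{l_x}(U_x)$ are pairwise disjoint. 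Evaluating at $w = y$ and $w = z$, the disjointness of the images forces $\# T^{-1}(y) = \# T^{-1}(z) = l_x =: l$.

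Finally, put $x_i := S_i(y)$ and $w_i := S_i(z)$ for $1 \leq i \leq l$, so that $T^{-1}(y) = \{x_1, \dots, x_l\}$ and $T^{-1}(z) = \{w_1, \dots, w_l\}$, and invoke the contraction estimate~(3) of Proposition~\ref{inverse branch of open expanding correspondences}, which reads $d(x_i, w_i) = d(S_i(y), S_i(z)) \leq \tfrac{1}{\lambda} d(y,z)$ for each $i$; this is exactly the required conclusion. There is no real obstacle here: the statement is essentially a compactness repackaging of Proposition~\ref{inverse branch of open expanding correspondences}, and the single point needing care is that $y$ and $z$ must be enumerated by the \emph{same} inverse branches $S_i$ — which is why both must lie in one common neighbourhood $U_x$, and is precisely what the Lebesgue number guarantees; once that is arranged, the index-by-index matching and the estimate follow immediately from the continuity and contraction properties of the $S_i$.
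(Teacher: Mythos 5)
Your proof is correct and follows essentially the same route as the paper: take the neighbourhoods $U_x$ from Proposition~\ref{inverse branch of open expanding correspondences}, extract a Lebesgue number $\epsilon$ for the resulting open cover of the compact space $X$, place any two points at distance less than $\epsilon$ in a common $U_x$, and then read off the cardinality statement and the contraction estimate from properties (1)--(3) of the inverse branches $S_i$. Your explicit remarks on passing to interiors and on using the disjointness of the images $S_i(U_x)$ to match cardinalities are minor refinements of the same argument.
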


\begin{proof}
    For each $x \in X$, choose a neighbourhood $U_x$ of $x$ as in Proposition~\ref{inverse branch of open expanding correspondences}. Since $X$ is compact, there is a Lebesgue number $\epsilon >0$ for the open cover $\{ U_x \}_{x \in X}$ of $X$. 
For each $y,\, z \in X$ with $d(y,z) <\epsilon$, we can find $x \in X$ with $y ,\, z \in U_x$. If $T^{-1} (x) =\emptyset$ then $T^{-1} (y) =T^{-1} (z) =\emptyset$, and thus $\# T^{-1} (y) =\# T^{-1} (z) =0$. Now suppose 
that $T^{-1} (x) \neq \emptyset$. By Proposition~\ref{inverse branch of open expanding correspondences} there are continuous maps $S_i$, $1\le i\le l$, satisfying (1), (2), (3) in Proposition~\ref{inverse branch of open expanding correspondences}, where $l= \# T^{-1} (x) >0$. Then $y,\, z \in U_x$ implies that $T^{-1} (y) =\{ S_1 (y) ,\, \dots ,\, S_l (y) \}$ and $T^{-1} (z) =\{ S_1 (z) ,\, \dots ,\, S_l (z) \}$. Property (3) means that for each $1\le i\le l$, we have $d(S_i (y), S_i (z)) \leqslant d(y,z)/\lambda$.
\end{proof}

\section{The  Ma\~n\'e cohomology lemma for open expanding multi-valued systems}\label{mane_section}

In single-valued ergodic optimization,
a \emph{Ma\~n\'e lemma}
refers to the existence of a coboundary that can be added to a function $f$, such that the resulting function is
no larger than $\beta(f)$;
the name was coined by Bousch 
\cite{Bou00}, as there had been analogous work of Ma\~n\'e
\cite{Ma92, Ma96}
in the context of Lagrangian flows.
A Ma\~n\'e lemma is known to be an important tool in proving \emph{typical periodic optimization} results (see \cite{Co16, HLMXZ, LZ25}, and cf.~Conjecture \ref{TPOconj}).
In this section,
working on the graph dynamical system, the coboundary
will be taken to mean a function of the form $v\circ\pi - v\circ\pi'$, where $\pi,\pi'$ are the coordinate projections.
We shall assume that
$(X,T)\in \D_{{\rm Met}}$ is open and expanding, and that $f$ is H\"older continuous.

\begin{notation}
For $(X,T)\in \D_{{\rm Met}}$, the graph $gr(T)$ will always be equipped with the metric $\hat{d}$ defined
by 
$$
\hat{d}\left( (x,y), (x',y') \right)
:=
\max \left( d(x,x'), d(y,y') \right).
$$
We shall consider H\"older continuous real-valued functions on both $X$ and $gr(T)$.
Recall that a real-valued function $g$ on a metric space $(Z,d_Z)$ is said to be 
\emph{$\alpha$-H\"older}, for $\alpha\in(0,1]$, if
there exists $K>0$ such that $|g(z)-g(z')|\le K d_Z(z,z')^\alpha$ for all $z,z'\in Z$.
\end{notation}

The following Ma\~n\'e lemma (Theorem \ref{mane_open_expanding}) is for open expanding  $(X,T)\in\D_{{\rm Met}}$, where the H\"older function
$f$ is defined on the graph $gr(T)$.
As noted in Section \ref{introsection}, we anticipate that this result will be a foundational step for proving results 
identifying specific maximizing measures (cf.~Conjecture \ref{sturmian_conj}), and in establishing that
maximizing measures are \emph{typically periodic} (cf.~Conjecture \ref{TPOconj}).
Moreover, the context of arbitrary open expanding systems is, even in the single-valued setting,
somewhat more general than is usual in the literature. 
A consequence of this generality is that the proof is rather longer\footnote{The 
length of proof rather belies the title of Ma\~n\'e \emph{lemma}, a situation reminiscent of that for
the celebrated \emph{Closing Lemma} \cite{pugh}, which was initially assumed to be a a straightforward
result.} 
than some of those that are available in special cases.

Some of the first proofs of Ma\~n\'e lemmas were given for expanding maps of the circle (see \cite{Bou00,CLT01}), 
a setting that allows certain simplifications.
On the one hand such maps are always topologically transitive (as are all smooth expanding maps of compact connected manifolds, see e.g.~\cite[Thm.~6.4.2]{URM}); transitivity\footnote{Other versions of the Ma\~n\'e lemma, such as in \cite{jenkinsonmauldinurbanskisft, Sav99} make hypotheses that imply transitivity.}
allows a proof using a nonlinear analogue of a Ruelle transfer operator  introduced in \cite{Bou00}
(the operator involves maximizing, rather than averaging, over preimages),
 whereas for non-transitive systems this operator need not have a fixed point\footnote{So for example
 the Ma\~n\'e lemma as stated in \cite[Prop~2.2]{Co16} is not correct in full generality, nor is
the preceding \cite[Lem~2.1(1)]{Co16}.}.
On the other hand, connectedness of the circle leads to other simplifications.
Our proof of Theorem \ref{mane_open_expanding} is partly based on the sketch proof in \cite{new survey},
and a significant portion (specifically, the proof of Claim 3) is dedicated to showing that a 
certain function $\phi$ does not take the value $+\infty$ (a point elided in \cite{new survey}); by contrast if 
$X$ is connected
it suffices to show that the continuous function $\phi$ is finite at a single point, leading to a considerably shorter proof. 

	\begin{thm}\label{mane_open_expanding}
Suppose $(X,T)\in \D_{{\rm Met}}$ is open and expanding. Let $\alpha \in (0,1]$. For every $\alpha$-H\"{o}lder function $f : gr(T) \rightarrow \R$, there exists an $\alpha$-H\"{o}lder function $v : X \rightarrow \R$ such that 
\begin{equation}\label{mane_required_inequality}
f +v \circ \pi -v\circ \pi' \le \beta\left(gr(T), \hat{T},f \right).
\end{equation}
	\end{thm}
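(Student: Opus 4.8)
The plan is to construct $v$ via a Lax--Oleinik / Bousch-type fixed point argument adapted to the multi-valued setting, working on $X$ itself but using the inverse branches supplied by Proposition~\ref{inverse branch of open expanding correspondences}. Write $\beta := \beta(gr(T),\hat T, f)$. Inequality \eqref{mane_required_inequality} asks that $f(x,y) + v(x) - v(y) \le \beta$ whenever $(x,y)\in gr(T)$, i.e. $v(y) \ge v(x) + f(x,y) - \beta$ for all $x \in T^{-1}(y)$; since by Remark~\ref{separatedness of T^-1} each $T^{-1}(y)$ is finite (of cardinality at most $M$), the natural candidate is to seek $v$ satisfying the nonlinear fixed point equation
\begin{equation*}
v(y) = \max_{x \in T^{-1}(y)} \bigl( v(x) + f(x,y) - \beta \bigr),
\end{equation*}
with the convention that an empty maximum is $-\infty$ (or, better, to first reduce to the case $T^{-1}(y)\neq\emptyset$ for all $y$, e.g. by passing to $\bigcap_n T^n(X)$, or simply to allow $v$ to take the value $-\infty$ on the transient part and check that \eqref{mane_required_inequality} is then vacuous or harmless there). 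Define the operator $\mathcal{L}$ on bounded functions by $(\mathcal{L}u)(y) := \max_{x\in T^{-1}(y)}(u(x) + f(x,y) - \beta)$; a fixed point of $\mathcal{L}$ gives equality above, hence in particular the desired inequality.

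The key steps, in order, are as follows. \textbf{Step 1 (normalization).} Show $\mathcal L$ maps bounded functions to bounded functions and is monotone and commutes with addition of constants; using the variational characterization $\beta = \delta(\hat f,\sigma) = \gamma = \alpha$ from Theorem~\ref{thm:equivalentdefintions} applied to the graph system (combined with Proposition~\ref{equal_max_ergodic_averages}, though here $f$ already lives on $gr(T)$ so we use $\beta(gr(T),\hat T, f)$ directly), deduce that iterating $\mathcal L^n$ on the zero function stays bounded above uniformly — this is exactly where $\beta$ being the right normalizing constant is used, since $\sup_y (\mathcal L^n 0)(y) = \max_{\text{length-}n\text{ chains}} \sum (f(x_k,x_{k+1}) - \beta)$ which is bounded above by definition of $\delta$. \textbf{Step 2 (equicontinuity / Hölder bound).} Using Corollary~\ref{uniform locally inverse contracting branch}: if $d(y,z)<\epsilon$ then $T^{-1}(y),T^{-1}(z)$ are matched up with $d(x_i,w_i)\le \lambda^{-1}d(y,z)$, so if $u$ is $\alpha$-Hölder with constant $K$ and $f$ is $\alpha$-Hölder with constant $K_f$ then $\mathcal L u$ is $\alpha$-Hölder with constant $\le K\lambda^{-\alpha} + K_f$ (locally; then upgrade to a global Hölder constant using compactness and the uniform bound on oscillation from Step 1). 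Choosing $K_\ast := K_f/(1-\lambda^{-\alpha})$ gives an $\mathcal L$-invariant set $\mathcal F$ of $\alpha$-Hölder functions with constant $\le K_\ast$ and with uniformly bounded oscillation. \textbf{Step 3 (fixed point).} On $\mathcal F$ (a convex, compact-in-$C(X)$, $\mathcal L$-invariant set, once we quotient by additive constants or fix a normalization like $\min v = 0$), the operator $\mathcal L$ is continuous (a finite max of continuous operations, using continuity of the branches $S_i$ and local constancy of $\#T^{-1}$), so Schauder--Tychonoff yields a fixed point $v\in\mathcal F$. \textbf{Step 4.} A fixed point $v$ of $\mathcal L$ is $\alpha$-Hölder and satisfies $v(y)\ge v(x)+f(x,y)-\beta$ for every $(x,y)\in gr(T)$, which rearranges to \eqref{mane_required_inequality}.

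The main obstacle, as the authors themselves flag in the paragraph preceding the theorem, is \textbf{Step 1} — specifically showing the fixed point exists without a transitivity hypothesis, which amounts to showing the relevant "potential" function $\phi(y) := \sup_n (\mathcal L^n 0)(y)$ (or $\limsup$ of Birkhoff-type suprema over backward chains ending at $y$) is finite at every $y\in X$, not just bounded in sup-norm. The subtlety is that boundedness above of $\sup_y(\mathcal L^n 0)(y)$ is immediate from the definition of $\delta$, but one needs $\phi(y) > -\infty$ for every $y$ with $T^{-1}(y)\neq\emptyset$ reachable by long backward chains, and more delicately that $\phi$ is bounded below, i.e. that the oscillation of $\mathcal L^n 0$ does not blow up; for non-transitive systems the backward orbit of a given $y$ may fail to approach a given invariant measure, so one cannot simply compare to $\beta$ pointwise. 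The resolution (following the sketch in \cite{new survey} and handling the finiteness issue carefully, as the authors note is done in their "Claim 3") will be to work with a suitable ergodic-averaging or telescoping argument along backward branches, using the contraction estimate \eqref{contracting of inverse branch} to control the Hölder oscillation of partial sums and thereby bound $\phi(y) - \phi(y')$ for $y,y'$ in a common $U_x$, then propagate this bound across $X$ by compactness and connectedness of the "backward-reachable" structure; the value $+\infty$ is excluded precisely by the uniform oscillation bound coming from Step 2 combined with the sup-bound from $\delta$. If $X$ is disconnected into pieces with no backward chains between them, one treats each backward-invariant piece separately and patches, noting \eqref{mane_required_inequality} only constrains $v$ along edges of $gr(T)$.
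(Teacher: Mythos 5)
Your candidate object is the right one: the paper's proof is built exactly around $\phi(y)=\sup_n(\mathcal{L}^n0)(y)$, i.e.\ the supremum of $\sum_{k=-n}^{-1}f(x_k,x_{k+1})-n\beta(f)$ over backward chains ending at $y$, and your Step 2 (local $\alpha$-H\"older control of such sums via Proposition~\ref{inverse branch of open expanding correspondences} and Corollary~\ref{uniform locally inverse contracting branch}) is essentially the paper's Claim 1. But the proposal has two genuine gaps. First, the fixed-point mechanism (Steps 3--4) cannot work at this level of generality: as the authors' footnote on \cite{Co16} warns, Bousch's nonlinear operator need not have a fixed point without transitivity, not even up to an additive constant. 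A counterexample inside the theorem's scope is the reducible two-symbol subshift of finite type with allowed transitions $1\to1$, $1\to2$, $2\to2$ (an open expanding single-valued system) with $f(x,y)$ the indicator of the cylinder $[2]$ in $x$: then $\beta(f)=1$, while $1^\infty$ is its own unique preimage, so a fixed point would satisfy $v(1^\infty)=v(1^\infty)+0-1$, which is impossible; allowing an eigenvalue-type constant $c$ (the natural normalization after quotienting by constants for Schauder) does not help, since telescoping along arbitrary chains and along maximizing backward chains forces $c=0$. So Schauder must fail --- no suitable normalized compact convex set is preserved. The good news is that the theorem does not need a fixed point at all: the inequality \eqref{mane_required_inequality} for $\phi$ follows directly from its definition (the paper's Claim 4), once $\phi$ is known to be finite, and the paper then simply redefines $v$ by a constant on the closed-and-open set $\phi^{-1}(-\infty)=\{x:T^{-1}(x)=\emptyset\}$.

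Second, and decisively, your justification for finiteness is wrong: ``$\sup_y(\mathcal{L}^n0)(y)=\max_{\cO_n(T)}\sum(f-\beta)$ is bounded above by definition of $\delta$'' does not follow. The identity $\delta(f)=\beta(f)$ only says that $\frac{1}{n+1}\sup_{\cO_n(T)}S_nf\to\beta(f)$ in the limsup sense, which is compatible with $\sup_{\cO_n(T)}S_nf-n\beta(f)\to+\infty$ sublinearly (say like $\sqrt{n}$). Proving that this quantity is bounded --- equivalently that $\phi$ never takes the value $+\infty$ --- is precisely the hard point the authors isolate as Claim 3, and it requires a genuine argument: assuming $A=\phi^{-1}(+\infty)\neq\emptyset$, one uses the inverse branches to spread a surplus of $+1$ in the Birkhoff sum over an open neighbourhood of each $y\in A$, covers the compact set $A$ by finitely many such neighbourhoods (bounding the chain lengths by some $N$), concatenates chains that start and end in $A$, and takes a weak$^*$ limit of the associated empirical measures on $gr(T)$ to produce an invariant measure $\mu$ with $\int f\,\mathrm{d}\mu\geqslant\beta(f)+1/N$, contradicting the definition of $\beta(f)$. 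Your proposed substitute --- the ``uniform oscillation bound from Step 2 combined with the sup-bound from $\delta$'' --- does not fill this hole: besides the invalid sup-bound, a local H\"older estimate does not control global oscillation when $X$ is disconnected (the paper instead bounds the finite part of $\phi$ by observing, Claim 2, that $\phi$ is continuous into $\R\cup\{\pm\infty\}$ with $\pm\infty$ isolated, hence has compact image), and your closing remark about patching backward-invariant pieces does not address where an invariant measure beating $\beta(f)$ would come from if $\phi$ were infinite somewhere.
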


    \begin{proof} We will write $\beta(f)$ instead of $ \beta(gr(T), \hat{T},f)$.
        For each $y \in X$ and $n \in \N$, define
        \begin{equation*}
            \cO_{-n} (y) := \{ (x_{-n}, \dots , x_{-1}, x_0) \in X^{n+1} : x_0 =y \text{ and } x_{k+1} \in T(x_k) \text{ for all } -n \le k \le -1  \}.
        \end{equation*}
    
        Define the function $\phi : X \rightarrow \R \cup \{ -\infty ,\, +\infty\}$ by
        \begin{equation}\label{phi=}
            \phi (x) := \sup \biggl\{ \sum_{k=-n}^{-1} f(x_k, x_{k+1}) -n \beta (f) : n \in \N ,\, (x_{-n}, \dots ,x_{-1}, x_0) \in \cO_{-n} (x) \biggr\},
        \end{equation}
        where we follow the convention that $\sup \emptyset =-\infty$. Clearly, $\phi (x) =-\infty$ if and only if $T^{-1} (x) =\emptyset$, i.e.~$\# T^{-1} (x) =0$. By Corollary~\ref{continuity of T^-1}, the set $\phi^{-1} (-\infty)$
is a strictly positive distance from its complement $X \setminus \phi^{-1} (-\infty)$,
in other words
$$\delta:=\inf\left\{d(x,y):x\in \phi^{-1} (-\infty), y\notin \phi^{-1} (-\infty)\right\}>0.$$

        The proof will begin by successively establishing four Claims concerning the function $\phi$.

        \smallskip
        \emph{Claim 1.} There exist $C, \epsilon >0$ such that if $x ,\, y \in X$ with $d(x,y) <\epsilon$ then exactly one of the following holds:
        \begin{enumerate}
            \smallskip
            \item $\phi (x) =\phi (y) =-\infty$.
            \smallskip
            \item $\phi (x) =\phi (y) =+\infty$.
            \smallskip
            \item $\phi (x) ,\, \phi (y) \in \R$ and $\abs{\phi (x) -\phi (y)} \leqslant C d(x,y)^\alpha$.
        \end{enumerate}

      For each $x\in X$, let $U_x$ be a neighbourhood of $x$ as in Proposition~\ref{inverse branch of open expanding correspondences}; we furthermore assume each $U_x$ to be open.
Let $\epsilon\in(0,\delta)$  be a Lebesgue number for the open cover $\{ U_x \}_{x \in X}$. 
It is then sufficient to find a constant $C >0$ and prove that if $\phi (x) \in \R$ and $d(x,y) <\epsilon$, then $\phi (y) \in \R$ and $\abs{\phi (x) -\phi (y)} \leqslant C d(x,y)^\alpha$.

        Fix $x ,\, y \in X$ with $\phi (x) \in \R$ and $d(x,y) <\epsilon$. We aim to show 
that $\abs{\phi (x) -\phi (y)} \leqslant C d(x,y)^\alpha$ for some $C >0$. We have 
that $\phi (y) >-\infty$, since $\phi (x) \in \R$ and $d(x,y) <\epsilon$, therefore $T^{-1} (y) \neq \emptyset$.

        We first fix arbitrary $n \in \N$ and $(y_{-n}, \dots ,y_{-1}, y_0) \in \cO_{-n} (y)$. Since $d(x,y)$ is smaller than a Lebesgue number for the open cover $\{ U_x \}_{x \in X}$, Proposition~\ref{inverse branch of open expanding correspondences} implies that there is a continuous map $S$ defined on a neighbourhood of both $x$ and $y$, such that $S(y)= y_{-1}$, $S(x) \in T^{-1} (x)$, and $d(S(x), S(y)) \leqslant d(x,y) /\lambda$ for some constant $\lambda >1$. If we set $x_0 =x$, then the argument above allows us to choose $x_{-1} \in T^{-1} (x_0)$ such that $d(x_{-1}, y_{-1}) \leqslant d(x_0, y_0)/\lambda$, and the same argument shows that we can inductively choose $x_{-(k+1)} \in T^{-1} (x_{-k})$ with 
$$d(x_{-(k+1)}, y_{-(k+1)}) \leqslant d(x_{-k}, y_{-k}) /\lambda\text{ for all }0\le k\le n-1.$$ 
In this way, we get an orbit $(x_{-n}, \dots ,x_{-1}, x_0) \in \cO_{-n} (x)$ such that $d(x_k, y_k) \leqslant \lambda^k d(x,y)$ for all $-n \le k\le 0$. 
Since $f$ is $\alpha$-H\"{o}lder, there exists $K>0$ such that
        \begin{equation*}
            \abs{f(z_1, w_1) -f(z_2, w_2)} \leqslant K d_2 ((z_1, w_1), (z_2, w_2))^\alpha =K \max \{ d(z_1, z_2),\, d(w_1, w_2) \}^\alpha
        \end{equation*}
        for all $(z_1, w_1) ,\, (z_2, w_2) \in gr(T)$, so
        \begin{alignat*}{4}
            &\biggl( \sum_{k=-n}^{-1} f(y_k, y_{k+1}) -n\beta (f) \biggr) -\biggl( \sum_{k=-n}^{-1} f(x_k, x_{k+1}) -n\beta (f) \biggr)\\
            &\qquad =\sum_{k=-n}^{-1} (f(y_k, y_{k+1}) -f(x_k, x_{k+1}))\\
            &\qquad \leqslant \sum_{k=-n}^{-1} K \max \{ d(x_k, y_k) ,\, d(x_{k+1}, y_{k+1}) \}^\alpha\\
            &\qquad \leqslant \sum_{k=-n}^{-1} K (\lambda^{k+1} d(x,y))^\alpha\\
            &\qquad \leqslant \frac{K}{1-\lambda^{-\alpha}} d(x,y)^\alpha.
        \end{alignat*}
        Set $C := K/(1- \lambda^{-\alpha})$. Since $n \in \N$ and $(y_{-n}, \dots ,y_0) \in \cO_{-n} (y)$ were chosen arbitrarily, the above inequality gives
        \begin{equation}\label{poikjhds}
            \phi (x) \geqslant \phi (y) -C d(x,y)^\alpha.
        \end{equation}

        Since $\phi (x) \in \R$ and $\phi (y) >-\infty$, (\ref{poikjhds}) implies that $\phi (y) \in \R$.
An argument analogous to the proof of (\ref{poikjhds}) shows that $\phi (y) \geqslant \phi (x) -C d(x,y)^\alpha$, 
and consequently $\abs{\phi (x) -\phi (y)} \leqslant Cd(x,y)^{\alpha}$, so Claim 1 is proved.

        \smallskip
        \emph{Claim 2.} There exists $M>0$ such that $\phi (X) \subseteq [-M, M] \cup \{ -\infty ,\, +\infty \}$.

        If $\R \cup \{ -\infty ,\, +\infty \}$ is topologised such that $\R$ has the Euclidean topology, and $-\infty$ and $+\infty$ are isolated points, then $\phi : X \rightarrow \R \cup \{ -\infty ,\, +\infty \}$ is continuous by Claim 1,
so $\phi (X)$ is compact in $\R \cup \{ -\infty ,\, +\infty \}$, and Claim 2 follows.

        \smallskip
        \emph{Claim 3.}  $\phi^{-1} (+\infty) =\emptyset$.

        Let us assume, for a contradiction, that $A:= \phi^{-1} (+\infty)$ is non-empty.
        By Claim 2, there exists $M>0$ such that $\phi (x) \leqslant M$ for all $x \in X \setminus A$. Fix an arbitrary $y \in A$. Since $\phi (y) =+\infty$, 
        we can choose $m_y \in \N$ and $(x_{-m_y}, \dots ,x_{-1}, x_0) \in \cO_{-m_y} (y)$ such that
        \begin{equation}\label{qcds}
            \sum_{k=-m_y}^{-1} f(x_k, x_{k+1}) -m_y \beta (f) >1+M +\max f -\beta (f).
        \end{equation}
        By Proposition~\ref{inverse branch of open expanding correspondences}, for each $-m_y +1 \le k \le 0$, there is an open neighbourhood $U_k$ of $x_k$, and a continuous map $S_k : U_k \rightarrow X$ satisfying $S_k (x_k) =x_{k-1}$ and $S_k (z) \in T^{-1} (z)$ for all $z \in U_k$. For each $1\le j \le m_y$, set $S^j_y := S_{-j+1} \circ \cdots \circ S_{-1} \circ S_0$, and write $S^0_y := \operatorname{Id}$. Then there is a neighbourhood $U$ of $y$ such that
if $0\le  j \le m_y$
then $S^j_y$
 can be defined on $U$, and is continuous, with $S^j_y (y) =x_{-j}$. 
Consequently, the map $\Psi_y$ on $U$ given by
        \begin{equation*}
            \Psi_y (z) := \sum_{j=0}^{m_y-1} f\bigl(S^j_y (z), S^{j+1}_y (z) \bigr)
        \end{equation*}
        is continuous, and by (\ref{qcds}), $\Psi_y (y) >m_y \beta (f) +K$, where $K:= 1+M +\max f -\beta (f)$. This allows us to choose an open neighbourhood $W_y$ of $y$ such that $\Psi_y (z) >m_y \beta (f) +K$ for all $z \in W_y$.

        For $z \in W_y$, write $z_{-j} := S^j_y (z)$ for all $0\le j \le m_y$,
so that $(z_{-m_y}, \dots ,z_{-1}, z_0) \in \cO_{-m_y} (z)$ and
        \begin{equation}\label{-m to 0}
            \sum_{k=-m_y}^{-1} f(z_k, z_{k+1}) -m_y \beta (f) >K.
        \end{equation}
        Now if $z_{-m_y} \notin A$ then we choose $0\le n \le m_y -1$ such that $z_{-n} \in A$ and $z_{-n-1} \notin A$. Then we have $\phi (z_{-n-1}) \leqslant M$, so that
        \begin{equation}\label{-m to -n-1}
            \sum_{k=-m_y}^{-n-2} f(z_k, z_{k+1}) -(m_y -n-1) \beta (f) \leqslant M.
        \end{equation}
        Moreover, we have
        \begin{equation}\label{-n-1 to -n}
            f(z_{-n-1}, z_{-n}) -\beta (f) \leqslant \max f -\beta (f),
        \end{equation}
        so the inequalities (\ref{-m to 0}), (\ref{-m to -n-1}), and (\ref{-n-1 to -n}) yield
        \begin{equation}\label{ws9cid}
            \sum_{k=-n}^{-1} f(z_k, z_{k+1}) -n \beta (f) >1.
        \end{equation}
        If on the other hand $z_{-m_y} \in A$ then we set $n =m_y$, so that (\ref{ws9cid}) holds by (\ref{-m to 0}). 

So we have shown that we can always find $n \leqslant m_y$ and $(z_{-n}, \dots ,z_{-1}, z_0) \in \cO_{-n} (z)$ satisfying $z_{-n} \in A$ and (\ref{ws9cid}).

        By Claim 1, $A= \phi^{-1} (+\infty)$ is closed in $X$, so it is compact. Since $\{ W_y \}_{y \in A}$ is an open cover of $A$, we can choose $L \in \N$ and $y_1 ,\, \dots ,\, y_L \in A$ such that $A \subseteq \bigcup_{i=1}^L W_{y_i}$. For convenience, we write $W_i =W_{y_i}$ and $m_i =m_{y_1}$ for all $1\le i\le L$. 
Set $N := \max \{ m_i : 1\le i\le  L \}$. Then for an arbitrary $z \in A$, there exists $1\le i \le L$ with $z \in W_i$, so we can find $n \leqslant m_i \leqslant N$ and $(z_{-n}, \dots ,z_{-1}, z_0) \in \cO_{-n} (z)$ satisfying $z_{-n} \in A$ and (\ref{ws9cid}).

        Let $x^1_0 \in A$. By the above, we can inductively find a sequence of positive integers $\{ n_k \}_{k \in \N}$, 
and a sequence of orbits $\{ (x^k_{-n_k}, \dots, x^k_{-1}, x^k_0) \}_{k \in \N}$, 
such that
for all $k \in \N$ we have
 $n_k \leqslant N$ and $x^k_{-n_k} \in A$, and  
$$(x^{k+1}_{-n_{k+1}}, \dots, x^{k+1}_{-1}, x^{k+1}_0) \in \cO_{-n_{k+1}} (x^k_{-n_k}),$$ 
and
        \begin{equation}\label{sjkdnoipd}
            \sum_{j=-n_k}^{-1} f(x^k_j, x^k_{j+1}) -n_k \beta (f) >1.
        \end{equation}
         For each $k \in \N$, define a probability measure $\mu_k$ on $gr(T)$ by
        \begin{equation*}
            \mu_k := \frac{1}{n_1 +n_2 + \cdots +n_k} \sum_{i=1}^k \sum_{j=-n_k}^{-1} \delta_{(x^i_j, x^i_{j+1})},
        \end{equation*}
        where $\delta_{(x^i_j, x^i_{j+1})}$ denotes the Dirac measure at $\bigl( x^i_j, x^i_{j+1} \bigr)$. Then by (\ref{sjkdnoipd}), we have
        \begin{equation*}
            \int_{gr(T)} \! f \, \mathrm{d} \mu_k >\frac{1}{n_1 +\cdots +n_k} \sum_{i=1}^k (n_i \beta (f) +1) =\beta (f) +\frac{k}{n_1 +\cdots +n_k} \geqslant \beta (f) +\frac{1}{N},
        \end{equation*}
        and since the space of probability measures on $gr(T)$ is weak* compact, the sequence $\{ \mu_k \}_{k \in \N}$ has an accumulation point $\mu$. Then $\mu \in \m_{T}$ and $\int \! f \, \mathrm{d} \mu \geqslant \beta (f) +1/N$. This contradicts the definition of $\beta (f)$. Therefore, $A =\emptyset$.

        \smallskip
        \emph{Claim 4.} $f+ \phi \circ \pi -\phi \circ \pi' \leqslant \beta (f)$.

        Fix an arbitrary $(x,y) \in gr(T)$. By (\ref{phi=}), for each $n \in \N$ and $(x_{-n}, \dots ,x_0) \in \cO_{-n} (x)$, we have $(x_{-n}, \dots ,x_0, y) \in \cO_{-(n+1)} (y)$, so
        \begin{equation*}
            \sum_{k=-n}^{-1} f(x_k, x_{k+1}) -n\beta (f)\leqslant \phi (y) +\beta (f) -f(x,y).
        \end{equation*}
        Since $n \in \N$ and $(x_{-n}, \dots ,x_0) \in \cO_{-n} (x)$ are chosen arbitrarily, we obtain that 
$$\phi (x) \leqslant \phi (y) +\beta (f) -f(x,y).$$ 
Recalling that $\phi (z) =-\infty$ if and only if $T^{-1} (z) =\emptyset$, note that $T^{-1} (y)$ is non-empty since 
$x \in T^{-1} (y)$, therefore $\phi (y) >-\infty$. Claim 3 gives $\phi (y) \in \R$, so 
$f(x,y) -\phi (y) +\phi (x) \leqslant \beta (f)$, and since $\pi (x,y) =x$ and $\pi' (x,y) =y$ then Claim 4 follows.

Having proved Claims 1--4, we now use them to conclude the proof.

        By Claim 3, if $x \in X$ then either $\phi (x) =-\infty$ or $\phi (x) \in \R$. By Claim 2, there exists $M>0$ such that $\phi (x) \in \{-\infty\} \cup [-M,M]$ for all $x \in X$.
        Now define $v : X \rightarrow \R$ by
        \begin{equation}\label{v=}
            v(x) := \begin{cases}
                \phi (x), & \phi (x) \in \R,\\
                -M -\max f +\beta (f), & \phi (x) =-\infty.
            \end{cases}
        \end{equation}
        Recalling that 
$\phi^{-1} (-\infty)$
is a strictly positive distance from its complement $X \setminus \phi^{-1} (-\infty)$,
we see that $v$ is $\alpha$-H\"{o}lder by Claim 1. Finally, it remains to check that
        \begin{equation}\label{lspakpapps}
            f(x,y) +v(x) -v(y) \leqslant \beta (f)
        \end{equation}
        for all $(x,y) \in gr(T)$.
For this, fix an arbitrary $(x,y) \in gr(T)$. Since $T^{-1} (y) \neq \emptyset$, by (\ref{v=}) we have $v(y)= \phi (y) \in \R$. If $\phi (x) \in \R$ then $v(x) =\phi (x)$, and thus (\ref{lspakpapps}) follows from Claim 4. If on the other hand
$\phi (x) =-\infty$, then (\ref{v=}) implies that
        \begin{equation*}
            v(x) =-M -\max f +\beta (f) \leqslant v(y) -f(x,y) +\beta (f),
        \end{equation*}
       which is the required inequality (\ref{lspakpapps}).
So we have checked that (\ref{lspakpapps}) holds for all $(x,y) \in gr(T)$, 
which is the required inequality (\ref{mane_required_inequality}).
    \end{proof}

We now deduce a Ma\~n\'e lemma for open expanding $(X,T)\in\D_{{\rm Met}}$, where the function
$f$ is defined on $X$.

\begin{thm}\label{mane_X}
Suppose $(X,T)\in\D_{{\rm Met}}$ is open and expanding.
For any $\alpha$-H\"older function $f:X\to\R$, there exists an $\alpha$-H\"older $v:X\to\R$ such that
\begin{equation}\label{fvineq}
f(x)+v(x) - v(y) \le \beta(X,T,f) \ \text{ for all }(x,y)\in gr(T).
\end{equation}
\end{thm}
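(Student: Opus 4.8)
The plan is to reduce this to the already-proved Theorem \ref{mane_open_expanding} via the graph system of Section \ref{graphsystem}. Given the $\alpha$-H\"older function $f:X\to\R$, I would first pass to its graph function $\hat f:gr(T)\to\R$ from Definition \ref{graph_function_defn}, defined by $\hat f(x_1,x_2):=f(x_1)$. The first (routine) step is to observe that $\hat f$ is $\alpha$-H\"older with respect to the metric $\hat d$ on $gr(T)$: if $|f(u)-f(u')|\le K d(u,u')^\alpha$ for all $u,u'\in X$, then for $(x_1,x_2),(x_1',x_2')\in gr(T)$ we have $|\hat f(x_1,x_2)-\hat f(x_1',x_2')|=|f(x_1)-f(x_1')|\le K d(x_1,x_1')^\alpha\le K\,\hat d\bigl((x_1,x_2),(x_1',x_2')\bigr)^\alpha$, since $d(x_1,x_1')\le\max(d(x_1,x_1'),d(x_2,x_2'))$.

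Next, since $(X,T)\in\D_{{\rm Met}}$ is open and expanding, Theorem \ref{mane_open_expanding} applies to $\hat f$, producing an $\alpha$-H\"older function $v:X\to\R$ with
\begin{equation*}
\hat f+v\circ\pi-v\circ\pi'\le\beta\bigl(gr(T),\hat T,\hat f\bigr).
\end{equation*}
Then I would invoke Proposition \ref{equal_max_ergodic_averages} (which applies since $\hat f\in C(gr(T))$ and $f\in C(X)$, both being H\"older hence continuous), giving $\beta\bigl(gr(T),\hat T,\hat f\bigr)=\beta(X,T,f)$. Finally, unwinding the definitions $\hat f(x,y)=f(x)$, $\pi(x,y)=x$, $\pi'(x,y)=y$ and evaluating the displayed inequality at an arbitrary point $(x,y)\in gr(T)$ yields exactly $f(x)+v(x)-v(y)\le\beta(X,T,f)$, which is \eqref{fvineq}.

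There is essentially no obstacle in this deduction: the statement is a formal consequence of Theorem \ref{mane_open_expanding} combined with the graph-system transfer of the ergodic optimization problem. All the genuine work --- the construction of the auxiliary function $\phi$ via backward orbit sums, Claims 1--4, and in particular the argument (Claim 3) that $\phi$ never takes the value $+\infty$ --- has already been carried out in the proof of Theorem \ref{mane_open_expanding}; the only point requiring verification here is the trivial fact that the graph function $\hat f$ inherits the H\"older exponent $\alpha$ from $f$.
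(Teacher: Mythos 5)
Your proposal is correct and follows essentially the same route as the paper: check that $\hat f$ is $\alpha$-H\"older with respect to $\hat d$, apply Theorem \ref{mane_open_expanding} to $\hat f$, identify $\beta\bigl(gr(T),\hat T,\hat f\bigr)=\beta(X,T,f)$ via Proposition \ref{equal_max_ergodic_averages}, and unwind the definitions of $\hat f$, $\pi$, $\pi'$. No discrepancies to report.
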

\begin{proof}
Given $f:X\to\R$, define
$\hat{f} : gr(T) \rightarrow \R$ by 
$
    \hat{f} (x_1, x_2) = f(x_1)
$,
as in Definition \ref{graph_function_defn},
and note that $\hat{f}$ is $\alpha$-H\"older (with respect to $\hat{d}$), since $f$ is $\alpha$-H\"older with respect to $d$.
so Theorem \ref{mane_open_expanding} implies that there exists an $\alpha$-H\"older function
$v : X \rightarrow \R$ such that 
\begin{equation}\label{fhat_v_ineq}
\hat{f} +v \circ \pi -v\circ \pi' \le \beta\left(gr(T), \hat{T}, \hat{f} \right).
\end{equation}
However, $\beta \left(gr(T),\hat{T},\hat{f}\right) = \beta (X,T,f)$
by Proposition \ref{equal_max_ergodic_averages}, so (\ref{fhat_v_ineq}),
together with the definitions of $\hat{f}$, $\pi$ and $\pi'$, gives the required inequality (\ref{fvineq}).
\end{proof}

In particular, we deduce the following
special case of Theorem \ref{mane_X} for single-valued 
systems, in the generality of arbitrary
open expanding maps:

\begin{cor}\label{mane_single_valued}
Let $X$ be a compact metric space, and $T:X\to X$ an open expanding map.
For any $\alpha$-H\"older function $f:X\to\R$, there exists an $\alpha$-H\"older $v:X\to\R$ such that
$f(x)+v(x)-v(T(x))\le \beta(X,T,f)$ for all $x\in X$.
\end{cor}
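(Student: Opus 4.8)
The plan is to deduce the corollary directly from Theorem \ref{mane_X} by regarding the single-valued map $T$ as a multi-valued system. First I would set $\tilde{T}:X\to 2^X$, $\tilde{T}(x):=\{T(x)\}$, and check that $(X,\tilde{T})\in\D_{{\rm Met}}$ is open and expanding. Since $T$, being an expanding map, is in particular continuous, $\tilde{T}$ is upper semi-continuous by the Remark in Section \ref{measures}, so $(X,\tilde{T})\in\D_{{\rm Met}}$. For openness, note that $\tilde{T}(A)=\bigcup_{x\in A}\{T(x)\}=T(A)$, which is open whenever $A$ is open precisely because $T$ is an open map. For the expanding property, observe that when $\tilde{T}(x)$ and $\tilde{T}(y)$ are the singletons $\{T(x)\}$, $\{T(y)\}$, the infimum appearing in Definition \ref{def expanding} equals $d(T(x),T(y))$, so the condition there becomes $d(T(x),T(y))\geqslant\lambda\, d(x,y)$ whenever $d(x,y)\leqslant\eta$, i.e.\ exactly the hypothesis that $T$ is expanding.

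Next I would identify the graph. Here $gr(\tilde{T})=\{(x,T(x)):x\in X\}$, and the only pairs $(x,y)\in gr(\tilde{T})$ are those with $y=T(x)$; indeed $\iota:X\to gr(\tilde{T})$, $\iota(x):=(x,T(x))$, is a homeomorphism with $\pi\circ\iota=\operatorname{Id}_X$ and $\pi'\circ\iota=T$. Applying Theorem \ref{mane_X} to $(X,\tilde{T})\in\D_{{\rm Met}}$ and the $\alpha$-H\"older function $f$ yields an $\alpha$-H\"older $v:X\to\R$ with $f(x)+v(x)-v(y)\leqslant\beta(X,\tilde{T},f)$ for all $(x,y)\in gr(\tilde{T})$. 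Substituting $y=T(x)$ gives $f(x)+v(x)-v(T(x))\leqslant\beta(X,\tilde{T},f)$ for every $x\in X$.

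Finally I would identify the constant $\beta(X,\tilde{T},f)$ with the classical maximum ergodic average $\beta(X,T,f)$. By Remark \ref{rem: invariant measure single-valued}, the set $\m_{\tilde{T}}$ coincides with the set of $T$-invariant probability measures in the classical sense, so by Definition \ref{alpha_max_erg_average} (and the Remark following it) $\beta(X,\tilde{T},f)=\sup\{\int f\,\mathrm{d}\mu:\mu\text{ is }T\text{-invariant}\}=\beta(X,T,f)$, which completes the proof. I do not expect a substantial obstacle here: the entire content is the reduction to the multi-valued setting, and the only care required is the routine verification that the three structural hypotheses (upper semi-continuity, openness, expansion) of $\D_{{\rm Met}}$ reduce for $\tilde{T}$ to exactly the stated hypotheses on $T$, together with the bookkeeping identifying $gr(\tilde{T})$ with $X$ via $\pi$.
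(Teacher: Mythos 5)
Your proposal is correct and is essentially the paper's own (implicit) argument: the paper states the corollary as an immediate special case of Theorem \ref{mane_X}, obtained by viewing $T$ as the multi-valued system $x\mapsto\{T(x)\}$, for which openness and the expanding condition of Definition \ref{def expanding} reduce exactly to the single-valued hypotheses, and $\beta$ coincides with the classical maximum ergodic average by Remark \ref{rem: invariant measure single-valued}. Your verification of these reductions, and the substitution $y=T(x)$ in the conclusion of Theorem \ref{mane_X}, is exactly the intended bookkeeping.
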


\section{Some examples of ergodic optimization for open expanding multi-valued dynamical systems}\label{examples_section}

We conclude by briefly considering certain (classes of) multi-valued dynamical systems 
covered by the theory developed in previous sections.

In general, suppose $(X,d)$ is a compact metric space, $C_1,\ldots, C_n$ are closed subsets of $X$,
and for each $1\le i\le n$ there is a continuous open mapping $T_i:C_i\to X$
that is expanding with respect to $d$ in the sense that there exists $\lambda_i>1$ such that
$d(T_i(x),T_i(y))\ge \lambda_i d(x,y)$ for all $x,y\in X$.
The multi-valued map $T$ given by $T(x)=\{T_i(x):x\in C_i\}$ is open, and is expanding in the sense of Definition
\ref{def expanding} if $T_i(x)\neq T_j(x)$ whenever $i\neq j$ and $x\in C_i\cap C_j$.

In the case that the maps $T_i$ are invertible with $T_i(C_i)=X$, the collection of inverse mappings $T_i^{-1}$ is referred to as
an \emph{iterated function system} (IFS) on $X$, and as an \emph{overlapping IFS} if 
$C_i\cap C_j$ has non-empty interior for some $i\neq j$.
The study of the fractal geometry of the limit sets of such systems is an active area of research, 
 even in apparently simple settings (see e.g.~\cite{bakermemams, baker, bakerkoivusalo, bms, hochman, peressimonsolomyak, pollicottsimon, rams, 
sidorov, solomyak}).

The ergodic optimization of such systems is non-trivial irrespective of whether the limit set is well understood.
As a particular example, consider the case of $X=[0,1]$, $C_0=[0,1/2]$, $C_1=[1/4,3/4]$, $C_2=[1/2,1]$,
with $T_i:C_i\to[0,1]$ given by $T_i(x)=2x-i/2$ for $0\le i\le 2$.
We refer to the multi-valued map $T(x)=\{T_i(x):x\in C_i\}$ as the \emph{three-branch doubling map}, it being essentially an augmentation of the well known doubling map $D: x\mapsto 2x \pmod 1$ via the addition of a third branch
$T_1(x)=2x-1/2$; in particular, every $D$-invariant measure is also $T$-invariant.
Since the (single-valued) doubling map is much studied in ergodic optimization, a brief comparison with $T$ will be instructive. Specifically, the maximizing measures for the 1-parameter family of functions $f_\theta(x)=\cos 2\pi(x-\theta)$
are well understood, having been investigated (experimentally and conjecturally) by Hunt \& Ott \cite{HO96a, HO96b} 
and Jenkinson \cite{jenkinson1, jenkinson2}, before Bousch \cite{Bou00} rigorously proved that they are 
\emph{Sturmian}\footnote{In the specific context of the doubling map, an invariant probability measure is Sturmian if and only if its support is a subset of some closed semi-circle.
More generally,
the Sturmian measure of rotation number $\varrho$
can be defined on symbolic space as
the
push forward of Lebesgue measure on the circle
under the map sending points to their coding sequence under rotation by $\rho$.
Further details on Sturmian measures, sequences, and orbits,
can be found in e.g.~\cite{adjr, Bou00, bullettsentenac, jenkinson2, announce, major, morsehedlund}.}.
In particular, the maximizing measure is \emph{periodic} (i.e.~supported on a single periodic orbit) for typical parameter values $\theta$, but there exist infinitely many parameters (constituting a set that is both meagre, and of Lebesgue measure zero) for which the maximizing measure is non-periodic.

The effect of introducing the third branch $T_1$, thus allowing choice in the evolution of forward orbits, is to considerably increase the maximum ergodic average for functions in the family $f_\theta$, as illustrated in Figure 1.
Investigation of the fine structure of the maximizing measures for $(T,f_\theta)$ reveals that, as for 
 $(D,f_\theta)$, Sturmian measures are again present, though with a significant difference: whereas the support of a Sturmian measure for $D$ lies in a closed sub-interval of length $1/2$, for $T$ the support lies in a closed sub-interval of length $1/4$. The analogous phenomenon occurs for the one-parameter family of functions
$g_\theta(x)=-d(x,\theta)$, where $d$ denotes the Euclidean metric on $[0,1]$; the concavity of these functions means that the $(D,g_\theta)$-maximizing measure is Sturmian\footnote{A closely related setting is to consider the one-parameter of functions $h_\theta(x)=-\delta(x,\theta)$, where $\delta$ denotes the intrinsic metric 
$\delta([x],[y])=\min_{n\in\Z}|x-y-n|$
on the circle $\R/\Z$; in this case the $(D,h_\theta)$-maximizing measures are also known to be Sturmian \cite{adjr}.} by \cite{announce,major}, but (except for the case of the 
Dirac measure at the fixed point 0), such measures are not $(T,g_\theta)$-maximizing (rather, these latter measures are
not $D$-invariant, but are Sturmian for $T$ in the sense of having support contained in an interval of length $1/4$). 
A more complete exposition of this will appear elsewhere.

\begin{figure}[!h]
\begin{center}
 \includegraphics[scale=.9]{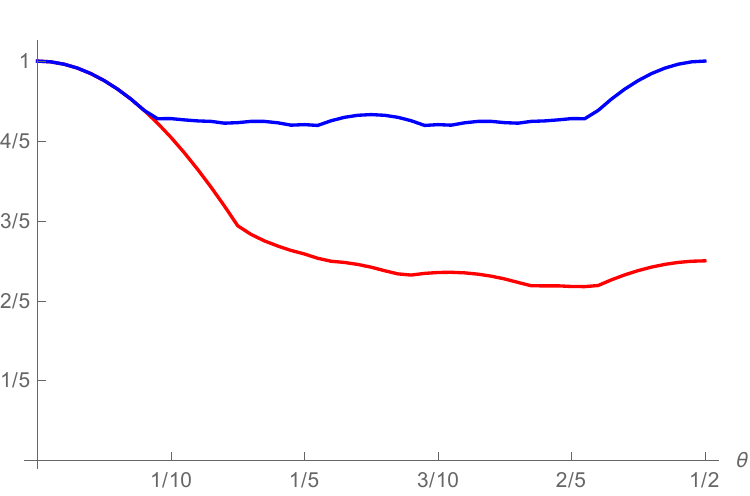}
\caption{Maximum ergodic averages $\beta(D,f_\theta)$ (red) and $\beta(T,f_\theta)$ (blue),
for $D$ the doubling map,
$T$ the three-branch doubling map, and $f_\theta(x)=\cos 2\pi(x-\theta)$, in the range $\theta\in[0,1/2]$. 
(Cf.~Figure 3 of \cite{HO96b}). For symmetry reasons the portion $\theta\in(1/2,1]$ can be omitted.}
\end{center}
\end{figure}

As a continuous map of the circle,
the doubling map can, after a change of coordinates, be written as a self-map $z\mapsto z^2$ of
$S^1=\{z\in\mathbb{C}:|z|=1\}$; in particular, it is a particular instance of a quadratic map 
$z\mapsto z^2+c$ restricted to its Julia set (cf.~e.g.~\cite[\S VIII]{cg}, \cite{devaney}).
    Siqueira \& Smania 
\cite{smaniasiqueira} and Siqueira \cite{siqueiraphd, siqueira1, siqueira2}
consider the multi-valued analogue, given by
 $z\mapsto z^{q/p} +c$;
    more precisely, for fixed $p ,\, q\in \N$ with $p<q$, 
and $c\in \mathbb{C}$,
 define
	\begin{equation*}
		F_{c,p,q} (z) := \bigl\{ w\in \overline{\mathbb{C}}  :  (w-c)^p =z^q \bigr\}
	\end{equation*}
    for all $z\in \overline{\mathbb{C}}$. The \emph{Julia set} $J(F_{c,p,q})$, defined as the closure of the 
set of repelling periodic points of $F_{c,p,q}$ (cf.~\cite[Defn.~6.31]{siqueiraphd}, \cite[\S~2.1]{smaniasiqueira}), 
is a closed subset of $\overline{\mathbb{C}}$. If $F_{c,p,q} |_J$ is the map given by $F_{c,p,q} |_J (z)
    := J(F_{c,p,q}) \cap F_{c,p,q} (z)$ for all $z\in J(F_{c,p,q})$,
    it follows from \cite[Thm.~2.7, Cor.~4.6, Cor.~4.8]{siqueiraphd} and \cite[Thms.~4.4, and~5.8]{siqueira1} that 
$(J(F_{c,p,q}), F_{c,p,q} |_J)$ is an open multi-valued topological dynamical system, that is expanding 
for certain parameters $c \in \mathbb{C}$, for example when $c$ is sufficiently
close to $0$.
            When $c=0$ and $p<q$, the Julia set is the unit circle,
and the system is equivalent to
$T(x) =\Bigl\{ \frac{qx}{p},\, \frac{qx+1}{p},\, \dots, \frac{qx+p-1}{p} \Bigr\}$
on $\R /\Z$, which is 
            open and expanding with respect to the intrinsic metric on $\R/ \Z$.
            
 The parameters $c=0$, $p=1$, $q=2$ give the doubling map $z\mapsto z^2$, and in this setting the 
set of \emph{barycentres} $\int_{S^1}z\, d\mu(z)$ of invariant measures is a certain compact convex set of the unit disc 
whose remarkable properties (in particular, the fact that its boundary contains a dense set of points of non-differentiability)
can be understood
(see \cite{Bou00, jenkinson1, jenkinson2,jenkinsonetds})
 in terms of the family $f_\theta(x)=\cos 2\pi(x-\theta)$; in particular, a measure has barycentre on the boundary if and only if it is Sturmian.
Calculations for other integers $p, q$ (with $c=0$ so that the Julia set is $S^1$) suggest that this may be a universal phenomenon (see Figure 2 for the case $p=2$, $q=3$), so we formulate the following Conjecture \ref{sturmian_conj},
and would expect that the Ma\~n\'e lemmas of Section \ref{mane_section} would underpin approaches to proving it,
in conjunction with specific estimates such as in \cite{Bou00}.

\begin{figure}[!h]
\begin{center}
 \includegraphics[scale=.99, trim=0cm 1cm 0cm 1cm, clip]{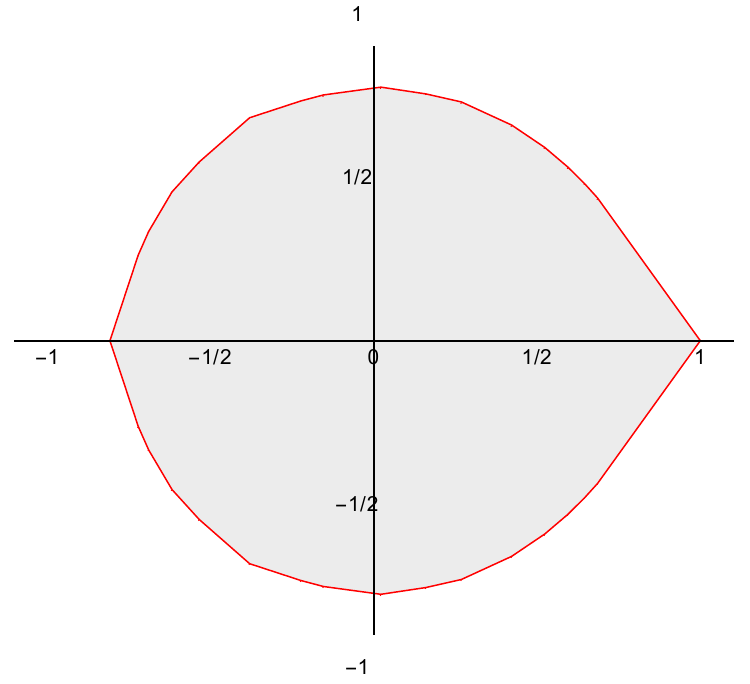}
\caption{Boundary of the set of barycentres $\int_{S^1}z\, d\mu(z)$ for probability measures $\mu$ invariant under 
$F_{0,2,3} (z) := \bigl\{ w\in \overline{\mathbb{C}}  :  w^2 =z^3 \bigr\}$. Extremal points in this figure
correspond to barycentres
of periodic orbits of period $\le 10$, all of which are Sturmian (i.e.~contained in some $1/3$-circle).}
\end{center}
\end{figure}

\begin{conj}\label{sturmian_conj} (Sturmian maximizing measures as extremal barycentres)

\noindent
For integers $1\le p<q$, the set of barycentres of probability measures on $S^1$ that are invariant
under $F_{0,p,q}$ has the property that a measure has barycentre on the boundary if and only if it is Sturmian (in the sense 
that its support is contained in some $1/q$-circle). 
\end{conj}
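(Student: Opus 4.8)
\medskip
\noindent\textbf{A strategy for proving Conjecture~\ref{sturmian_conj}.}
Write $T=F_{0,p,q}$, realised as the open expanding multi-valued map $T(x)=\bigl\{\tfrac{qx}{p},\tfrac{qx+1}{p},\ldots,\tfrac{qx+p-1}{p}\bigr\}$ on $\R/\Z\cong S^1$, and let $B:=\{\int_{S^1}z\,\mathrm{d}\mu:\mu\in\m_T\}$, a compact convex subset of the closed unit disc that one verifies to have non-empty interior. The plan is to reduce the conjecture to an ergodic optimization statement for the family $f_\theta(x)=\cos 2\pi(x-\theta)$ --- the family Bousch \cite{Bou00} treated for the doubling map --- and then to prove that statement by combining the Ma\~n\'e lemma of Section~\ref{mane_section} with a Bousch-type concavity estimate.

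First I would make the link between $\partial B$ and ergodic optimization precise. Writing $z=e^{2\pi i x}$, the linear functional $w\mapsto\mathrm{Re}(e^{-2\pi i\theta}w)$ on $\mathbb{C}\cong\R^2$ satisfies $\mathrm{Re}\bigl(e^{-2\pi i\theta}\int z\,\mathrm{d}\mu\bigr)=\int f_\theta\,\mathrm{d}\mu$, so the support line of $B$ with outer normal $e^{2\pi i\theta}$ meets $B$ exactly in the barycentres of the $(S^1,T,f_\theta)$-maximizing measures, its value along the line being $\beta(S^1,T,f_\theta)$. Since $B$ has non-empty interior, $\mathrm{bar}(\mu)\in\partial B$ if and only if $\mu$ is $(S^1,T,f_\theta)$-maximizing for some $\theta\in[0,1)$, and Conjecture~\ref{sturmian_conj} becomes equivalent to the conjunction of: (i) for every $\theta$, every $(S^1,T,f_\theta)$-maximizing measure is the Sturmian measure $\mu_{\varrho(\theta)}$ of a suitable rotation number $\varrho(\theta)$, whose support lies in a closed arc of length $1/q$; and (ii) $\theta\mapsto\varrho(\theta)$ is onto $[0,1]$, so that every Sturmian measure is $(S^1,T,f_\theta)$-maximizing for some $\theta$ and hence has barycentre on $\partial B$. (The same methods would also yield uniqueness of the maximizing measure for each $\theta$, hence strict convexity of $B$, though this is not needed for the conjecture.)

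For (i) I would pass to the graph system: by Proposition~\ref{equal_max_ergodic_averages} the $(S^1,T,f_\theta)$-maximizing measures are precisely the $\pi_T$-pushforwards of the $(gr(T),\hat T,\hat f_\theta)$-maximizing measures, and Theorem~\ref{mane_X} supplies an $\alpha$-H\"older $v=v_\theta:\R/\Z\to\R$ with $f_\theta(x)+v(x)-v(y)\le\beta(S^1,T,f_\theta)$ for all $(x,y)\in gr(T)$. Since the coboundary $v\circ\pi_T-v\circ\pi_T'$ integrates to zero against every $\hat T$-invariant measure, any maximizing measure, lifted to $gr(T)$, is supported on the zero set $Z_\theta:=\{(x,y)\in gr(T):f_\theta(x)+v(x)-v(y)=\beta(S^1,T,f_\theta)\}$, so $\supp\mu\subseteq\pi_T(Z_\theta)$. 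The crux is to show $\pi_T(Z_\theta)$ lies in a closed arc of length $1/q$. Here I would adapt Bousch's argument for the doubling map: $T$ has exactly $q$ inverse branches, each a $\tfrac pq$-contraction by Proposition~\ref{inverse branch of open expanding correspondences}, while $t\mapsto\cos 2\pi t$ is strictly concave on each half-period; choosing a calibrated subaction --- e.g.~the function $\phi$ from the proof of Theorem~\ref{mane_open_expanding} (which is robust to non-transitivity), or, when available, a fixed point of a Bousch-type max-transfer operator $(\mathcal{L}g)(x)=\max_{y\in T^{-1}(x)}\bigl(g(y)+f_\theta(y)\bigr)-\beta$ --- one shows, by iterating and playing the contraction of inverse branches against the strict concavity of $\cos$, that $f_\theta+v\circ\pi-v\circ\pi'$ is strictly below $\beta$ outside a length-$1/q$ arc whose position is controlled by $\theta$. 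Identifying the unique invariant measure carried by that arc with $\mu_{\varrho(\theta)}$ gives (i), and (ii) then follows from the classical Sturmian correspondence between the arc's combinatorics and the rotation number: one computes $\int f_{\theta(\varrho)}\,\mathrm{d}\mu_\varrho=\beta(S^1,T,f_{\theta(\varrho)})$ for a $\theta(\varrho)$ varying monotonically and continuously, so that as $\varrho$ sweeps $[0,1]$ the supporting directions sweep out all of $\partial B$.

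The main obstacle is the quantitative step inside (i): the Ma\~n\'e lemma delivers $v$ only abstractly, and forcing it to concentrate the effective potential on a $1/q$-arc means re-deriving, for general $1\le p<q$, the comparison and monotonicity estimates of \cite{Bou00} (written there for the doubling map, where there is a single inverse branch of ratio $\tfrac12$); what must be controlled is the interplay between the contraction ratio $p/q$ and the modulus of strict concavity of $\cos$ on a half-period, and this is exactly the ``specific estimate'' that the discussion preceding Conjecture~\ref{sturmian_conj} anticipates will be needed. A secondary difficulty, absent in the single-valued circle setting, is the bookkeeping of multi-valuedness: one must be sure that maximizing measures --- defined through the orbit space and transported via the graph system --- are genuinely governed by the inverse-branch picture, and translate ``support in a $1/q$-circle'' into the symbolic ($q$-symbol) Sturmian language; Propositions~\ref{inverse branch of open expanding correspondences} and~\ref{equal_max_ergodic_averages} legitimise this but it still requires care. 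Alternatively, one could try to encapsulate the estimates by verifying an abstract Sturmian-optimization criterion of Bousch for $(T,f_\theta)$, with the present multi-valued Ma\~n\'e lemma playing the role of its single-valued circle-map ingredient.
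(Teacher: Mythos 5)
The statement you are addressing is a \emph{conjecture}: the paper offers no proof of it, only the remark that the Ma\~n\'e lemmas of Section~\ref{mane_section} together with Bousch-type estimates as in \cite{Bou00} are expected to underpin an eventual proof. Your proposal is, in outline, exactly that anticipated route --- reduce $\partial B$ to the family $f_\theta$ via supporting hyperplanes, transport to the graph system by Proposition~\ref{equal_max_ergodic_averages}, invoke Theorem~\ref{mane_X} to confine maximizing measures to the zero set of $f_\theta+v\circ\pi-v\circ\pi'-\beta$, and then argue that this zero set projects into a $1/q$-arc --- so as a research plan it is well aligned with the paper. But it is not a proof, and you say so yourself: the crux, namely the quantitative interplay between the contraction ratio $p/q$ of the inverse branches and the strict concavity of $\cos$ on a half-period, which is what would force $\pi_T(Z_\theta)$ into an arc of length $1/q$, is left entirely open, and this is precisely the content of the conjecture rather than a technical afterthought. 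Bousch's estimates for the doubling map do not transfer formally: with $p>1$ the inverse branches overlap, the relevant ``max-transfer'' operator maximizes over $q$ branches of ratio $p/q$ (which may be close to $1$), and no argument is given that a calibrated subaction exists with the required localisation property, nor that the system is transitive enough for the fixed-point approach you mention as an alternative.

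Two further steps are asserted without justification and would need real work. First, the identification of the unique invariant measure carried by the putative $1/q$-arc with a Sturmian measure $\mu_{\varrho(\theta)}$, and the surjectivity and monotonicity of $\theta\mapsto\varrho(\theta)$ (your point (ii)), are stated but not argued; note also that the conjecture's notion of ``Sturmian'' is ``support contained in some $1/q$-circle'', so for the ``if'' direction you must show that \emph{every} $T$-invariant measure supported in such an arc is maximizing for some $f_\theta$, not merely the members of a canonical one-parameter family --- for the doubling map this equivalence is a theorem of Bousch, and its analogue here is unproved. Second, the claim that $B$ has non-empty interior (needed for your supporting-hyperplane reduction) is waved through with ``one verifies''. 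None of this means the approach is wrong --- it is the approach the authors themselves envisage --- but as submitted it is a programme with the decisive estimates missing, not a proof of Conjecture~\ref{sturmian_conj}.
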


More generally, the specific open expanding systems we have considered suggest,  by analogy with the
conjectures\footnote{See also the later conjecture of Yuan \& Hunt \cite{YH99}, and the typical periodic optimization conjecture in \cite{new survey}.}
 of Hunt \& Ott,
and in view of the presence of a Ma\~n\'e lemma (see Section \ref{mane_section}),
 that there are no obvious obstacles to generalising the single-valued
\emph{topological} typical periodic optimization (TPO) 
of \cite{Co16} to the multi-valued setting, nor any reason to limit the
\emph{probabilistic}  typical periodic optimization (cf.~\cite{BZ16, DLZ24, GSZ25})
to the single-valued setting. We therefore articulate the following conjecture\footnote{Note that, without
the open expanding hypothesis, it is known that maximizing measures 
for multi-valued dynamical systems are \emph{typically unique} (see \cite{jllz} for 
further details, including the varying interpretations of \emph{typical} in that general context).}: 

\begin{conj}\label{TPOconj} (TPO for open expanding multi-valued dynamical systems)

\noindent Let $(X,T)$ be an open expanding multi-valued dynamical system, and let $\lip(X)$
denote the Banach space of Lipschitz real-valued functions on $X$. 
Let $P\subseteq \lip(X)$ consist of those functions whose maximizing measure is unique, and supported on a single periodic orbit.
Then 
$P$ contains an open dense subset of $\lip(X)$,
and is a prevalent\footnote{In the sense of \cite{huntsaueryorke} (cf.~also \cite{christensen2, huntsaueryorkeaddendum}).}
 subset of $\lip(X)$.
\end{conj}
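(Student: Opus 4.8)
The plan is to transport the problem, via Proposition~\ref{equal_max_ergodic_averages}, to the graph system $\left(gr(T),\hat T\right)$, on which the shift $\sigma_{\hat T}$ acts and for which $\hat T$ inherits, from Proposition~\ref{inverse branch of open expanding correspondences}, a complete family of uniformly contracting inverse branches; one thereby works with a genuinely single-valued expanding-type dynamics on a compact metric space, the only structural loss being that the space of invariant measures need not be a Choquet simplex (Remark~\ref{nonsimplex}). For given $f\in\lip(X)$, with graph function $\hat f$, Theorem~\ref{mane_open_expanding} furnishes a Lipschitz coboundary $v\circ\pi-v\circ\pi'$ after which $\tilde f:=\hat f+v\circ\pi-v\circ\pi'\le\beta(f)$, the $f$-maximizing measures being exactly the $\hat T$-invariant measures supported on the closed set $\Sigma(f):=\{(x,y)\in gr(T):\tilde f(x,y)=\beta(f)\}$, equivalently on the largest $\sigma_{\hat T}$-invariant subset of $\Sigma(f)^{\mathbb Z}\cap gr(T)_{\hat T}$. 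From here the architecture of the single-valued arguments in \cite{Co16, GSZ25, HLMXZ, LZ25} becomes available: for the open dense part one perturbs $f$ within $\lip(X)$ so that this largest invariant set collapses onto a single periodic $\sigma_{\hat T}$-orbit -- equivalently a cycle $x_0,x_1,\dots,x_{p}=x_0$ in $gr(T)$ -- and so that this orbit is thereafter locally maximizing and persistent.

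First I would establish a multi-valued closing lemma: using the uniformly contracting inverse branches $S_i$ of Proposition~\ref{inverse branch of open expanding correspondences}, any finite chain in $gr(T)$ that almost closes up is shadowed by an exact cycle with exponentially small correction. Combined with Poincar\'e recurrence for $\left(gr(T),\hat T\right)$ (cf.~\cite{AFL91}) applied to a maximizing measure, this yields periodic $\sigma_{\hat T}$-orbits whose $\tilde f$-average is within $\varepsilon$ of $\beta(f)$. One then adds to $f$ a small Lipschitz perturbation supported near the projection to $X$ of such an orbit, designed to make that orbit the unique maximizer, followed by a further arbitrarily small perturbation rendering the corresponding flat set a strict local maximum -- the contracting inverse branches being exactly what shows that any competing orbit loses a definite amount. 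Openness of the resulting set follows, in the standard way, from uniqueness of the maximizing measure together with the upper semicontinuity of $\mu\mapsto\int f\,d\mu$ and of the maximizing-measure multifunction. For prevalence I would run the probabilistic TPO machinery of \cite{BZ16, DLZ24, GSZ25}: choose a finite-dimensional probing family $f+\sum_i t_i g_i$ (e.g.~built from distance-to-a-point functions) spanning enough Lipschitz directions, and show that for Lebesgue-a.e.~$(t_i)$ the maximizing measure is a single periodic orbit, by bounding the modulus of concavity of $(t_i)\mapsto\beta\!\left(f+\sum_i t_i g_i\right)$ and the ``size'' of its non-differentiability set in terms of the geometry of $\m_{\max}$; once again Theorem~\ref{mane_X} is what licenses the reduction to the flat set.

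The main obstacle -- and the reason this remains conjectural -- is that $\m_T$ (equivalently $\m_{\sigma_{\hat T}}(gr(T)_{\hat T})$) is \emph{not} in general a Choquet simplex, so the ergodic-decomposition and ``single ergodic maximizer'' reductions underlying \cite{Co16, GSZ25} must be replaced by arguments respecting the more intricate extreme-point structure of Remark~\ref{nonsimplex}. Concretely, one must verify that a maximizing measure supported on a single periodic $\sigma_{\hat T}$-orbit is automatically extreme in $\m_{\max}$, and that the closing and lock-in perturbations genuinely eliminate \emph{all} competing cycles in $gr(T)$ rather than merely those visited by one ergodic piece. Establishing the closing lemma with Lipschitz (not merely H\"older) control, and finding a transitivity-free substitute for the flatness estimates that does not rely on simplex geometry, are the two technical cruxes; everything else should be a careful but routine transcription of the single-valued theory through the graph-system dictionary of Section~\ref{graphsystem}.
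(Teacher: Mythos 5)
The statement you were asked about is Conjecture~\ref{TPOconj}: the paper offers no proof of it, and neither do you. What you have written is a research programme, not an argument — and you say as much yourself ("the reason this remains conjectural"). Your outline essentially reproduces the paper's own motivation for stating the conjecture (pass to the graph system via Proposition~\ref{equal_max_ergodic_averages}, use the Ma\~n\'e lemma of Theorems~\ref{mane_open_expanding}/\ref{mane_X} to reduce to invariant measures on the flat set, then invoke the single-valued TPO machinery of \cite{Co16, GSZ25, HLMXZ, LZ25, BZ16, DLZ24}), but every step that carries the actual mathematical weight is deferred: the multi-valued closing/shadowing lemma is asserted, not proved; the Contreras-type perturbation that collapses the largest invariant subset of the flat set onto a single periodic orbit is described only as "the architecture becomes available"; the openness argument via upper semicontinuity of the maximizing-measure multifunction and the prevalence estimates via a finite-dimensional probing family are sketched at the level of slogans. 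Even in the single-valued setting these are substantial theorems, so "careful but routine transcription" is not a proof, and you correctly flag that the failure of $\m_T$ to be a Choquet simplex (Remark~\ref{nonsimplex}) breaks the ergodic-decomposition reductions those proofs rely on, without offering a substitute.

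To be clear about what would be needed before this could count as progress on the conjecture rather than a restatement of it: (i) a closing lemma for open expanding multi-valued systems with Lipschitz (not merely H\"older) control, proved from Proposition~\ref{inverse branch of open expanding correspondences}; (ii) a lock-in argument showing that after a small Lipschitz perturbation the \emph{entire} maximal $\hat T$-invariant subset of the flat set $\Sigma(f)$ — not just the piece charged by one extreme measure — reduces to a single cycle, which is exactly where the non-simplex extreme-point structure bites; and (iii) quantitative estimates (transversality/concavity of $t\mapsto\beta(f+\sum_i t_i g_i)$) supporting the prevalence claim. None of these is carried out, so there is a genuine gap: the proposal identifies the right ingredients but does not prove density, openness, or prevalence of $P$.
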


\end{document}